\definecolor{myblue}{rgb}{0,0,0.6}
\numberwithin{equation}{section}
\numberwithin{table}{section}
\numberwithin{figure}{section}
\newcommand{\cH}{\mathcal{H}}
\newcommand{\beqs}{\begin{equation*}}
\newcommand{\eeqs}{\end{equation*}}
\newcommand{\beq}{\begin{equation}}
\newcommand{\eeq}{\end{equation}}
\newcommand{\tand}{\text{ and }}
\newcommand{\tfa}{\text{ for all }}
\newcommand{\newy}{y}
\newcommand{\obst}{\mathcal{O}}
\newcommand{\Rea}{\mathbb{R}}
\newcommand{\cM}{\mathcal{M}}
\newcommand{\nvs}{|v|^2}
\newcommand{\ngvs}{|\nabla v|^2}
\newcommand{\gv}{\nabla v}
\newcommand{\vb}{\overline{ v}}
\newcommand{\bx}{x}
\newcommand{\pdiff}[2]{\frac{\partial #1}{\partial #2}}
\newcommand{\diff}[2]{\frac{d #1}{d #2}}
\newcommand{\smallp}{\rho}
\newcommand{\smallpt}{\widetilde{\smallp}}
\newcommand{\comp}{\operatorname{comp}}
\newcommand{\cF}{\mathcal{F}}
\newcommand{\hsc}{h}
\newcommand{\rd}{d}
\DeclareMathOperator{\RC}{\mathsf{RC}}
\newcommand{\resolve}{{\mathcal{R}}}
\definecolor{escol}{rgb}{0,0,0.8}
\newcommand{\N}[1]{\norm{#1}}
\title[Helmholtz quasi-resonances are unstable]{
Helmholtz quasi-resonances are unstable under most single-signed perturbations of the wave speed
}
\date{\today}
\author{Euan A.~Spence}
\address{Department of Mathematical Sciences, University of Bath, Bath, BA2 7AY, UK}
\email{e.a.spence@bath.ac.uk}
\author{Jared Wunsch}
\address{Department of Mathematics, Northwestern University, Evanston, IL, USA}
\email{jwunsch@math.northwestern.edu}
\author{Yuzhou Zou}
\address{Department of Mathematics, Northwestern University, Evanston, IL, USA}
\email{yuzhou.zou@northwestern.edu}
\begin{document}

\begin{abstract}
We consider Helmholtz problems with a perturbed wave speed, where the single-signed perturbation is linear in a parameter $z$. 
Both the wave speed and the perturbation are allowed to be discontinuous (modelling a penetrable obstacle). 
We show that there exists a polynomial function of frequency such that, for any frequency, for most values of $z$, the norm of the solution operator is bounded by that function.

This solution-operator bound is most interesting for Helmholtz problems with strong trapping; recall that here there exists a sequence of real frequencies, tending to infinity, through which 
the solution operator grows superalgebraically, with these frequencies often called \emph{quasi-resonances}.
The result of this paper then shows that, at every fixed frequency in the quasi-resonance, the norm of the solution operator becomes much smaller for 
most single-signed perturbations of the wave speed, i.e., quasi-resonances are unstable under 
most such perturbations.

\end{abstract}

\maketitle

\section{Introduction}

\subsection{The main results}

Let $\Delta$ be the Laplace operator on $\RR^d$, $d\geq 1$.
Let $n\in L^\infty(\mathbb{R}^d;\mathbb{R})$ be strictly positive 
and equal to $1$ outside a sufficiently-large ball. 
Let 
$\psi \in L^\infty_{\rm comp}(\mathbb{R}^d;\mathbb{R})$. 
Let $R_0>0$ be such that $\supp(1-n)\cup\supp(\psi)\subset B(0,R_0)$.

Given $k>0$, $z\in\mathbb{R}$, and $f\in L^2_{\rm comp}(\mathbb{R}^d)$, let $u\in H^1_{\rm loc}(\mathbb{R}^d)$ be the solution to
\beq\label{eq:Helmholtz}
(-k^{-2} \Delta -n - z\psi)u=f
\eeq
satisfying the Sommerfeld radiation condition 
\beq\label{eq:src}
\left(k^{-1}\frac{\partial}{\partial r} - i\right)u = o(r^{-(d-1)/2}) \quad\text{ as } r:=|x|\to \infty, \text{ uniformly in } x/r
\eeq
(such a solution is said to be \emph{outgoing}). 
With the Helmholtz equation \eqref{eq:Helmholtz} understood as coming from the wave equation via Fourier transform in time (with Fourier variable $k$), $n+z\psi$ is then the inverse of the square of the wave speed.

The existence and uniqueness of the solution to \eqref{eq:Helmholtz}-\eqref{eq:src} is standard (see the recap in Lemma \ref{lem:allgood} below).
We then write $u = (-k^{-2} \Delta -n - z\psi-i0)^{-1}f$, where the
$i0$ indicates that the radiation condition can be obtained by the
limiting absorption principle (see Lemma~\ref{lem:allgood} below).

This paper is concerned with the behaviour of  the solution operator
$\chi (-k^{-2} \Delta -n - z\psi-i0)^{-1}\chi: L^2 \to L^2$, where
$\chi \in C^\infty_{\rm comp}(\mathbb{R}^d)$, as a function of both
the frequency $k>0$ and the perturbation parameter $z$.

We recall 
  that the high-frequency behaviour of the solution operator $\chi(-k^{-2}\Delta-n)^{-1}\chi$ with smooth $n$ is 
  closely linked to the dynamics of
the Hamiltonian system with Hamiltonian
\beq\label{eq:p}
p(x,\xi)=\abs{\xi}^2-n(x).
\eeq
Letting $\Sigma$ denote the characteristic set, a.k.a., the energy surface,
$$
\Sigma := \big\{(x,\xi): p(x,\xi)=0\big\},
$$
we consider the dynamics inside $\Sigma$ given by the flow along the
\emph{Hamilton vector field}
$$
H_p:= 2\xi \cdot \pa_x +\nabla n \cdot \pa_\xi.
$$
(This is Newton's second law, with force given by the gradient of the
potential $-n$.)  The integral curves of $H_p$, i.e., the solutions $(x(t),\xi(t))$ of 
\beqs
\diff{x_i}{t}
= 2 \xi_i \quad\text{ and } \quad 
\diff{\xi_i}{t}
 = \pdiff{n}{x_i},
\eeqs
 in $\Sigma$ are known
as \emph{null bicharacteristics}.  A null bicharacteristic is said to
be trapped forwards/backwards if
$$
\lim_{t \to\pm \infty} \abs{x(t)} \neq \infty. 
$$
We say that a set $S \subset \RR^d$ \emph{geometrically controls} the backward
trapped null bicharacteristics if for each $(x(0), \xi(0))$ on a backward trapped null
bicharacteristic $(x(t), \xi(t))$, there exists $T<0$ with $x(T) \in S$.

\begin{theorem}[Main result for smooth $n$]\label{thm:main}
Suppose that, in addition to the assumptions on $n$ and $\psi$ above, $n,\psi\in C^{\infty}$
and $\psi\geq c>0$ on a set that geometrically controls all
backward-trapped null bicharacteristics for $-k^{-2} \Delta -n$. 

(a) 
Given $\epsilon, k_0, \smallp>0$ and $\chi \in C^\infty_{\rm comp}(\mathbb{R}^d)$, there exists $C_1>0$ such that, for all $k\geq k_0$, 
$z\mapsto \chi (-k^{-2} \Delta -n-z\psi-i0)^{-1}\chi$ extends meromorphically from $z\in (-\smallp,\smallp)$ to $z\in\mathbb{C}$, with the number of poles 
 in $\{ z \in \mathbb{C}, |z|< \smallp\}$ bounded by $C_1 k^{d+1+\epsilon}$. 

(b) 
There exists $\smallp>0$ such that the following is true. Given $\epsilon, k_0,\delta>0$, $N\geq 0$, and $\chi \in C^\infty_{\rm comp}(\mathbb{R}^d)$, there exists $C_2>0$ such that for all $k\geq k_0$
there exists a set $S_k\subset (-\smallp,\smallp)$ with Lebesgue measure $|S_k|
\leq \delta k^{-N}$ such that 
\begin{equation}
\label{eq:resolvent}
\| \chi (-k^{-2} \Delta -n- z\psi-i0)^{-1}\chi\|_{L^2 \to L^2} \leq \frac{C_2}{ \delta} k^{5(d+1)/2 +N+ \epsilon}
\text{ for all } z \in (-\smallp,\smallp)\setminus S_k.
\end{equation}
\end{theorem}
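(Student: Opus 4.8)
The plan is to prove both parts by combining three ingredients: (i) a \emph{damping estimate} showing that, once $\operatorname{Im} z$ is bounded below by a fixed positive constant, the cut-off resolvent is holomorphic in $z$ and polynomially bounded in $k$; (ii) a Fredholm–determinant construction that turns the meromorphic continuation in $z$ into a scalar holomorphic function whose zeros are exactly the poles, so that Jensen's formula bounds their number --- this gives (a); and (iii) a quantitative argument converting ``few poles plus a polynomial bound on a line $\operatorname{Im} z=\alpha$'' into ``a polynomial bound off a small exceptional set of real $z$'' --- this gives (b).

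For (i), write $z\psi=\operatorname{Re}(z)\,\psi+i\operatorname{Im}(z)\,\psi$, so that for $\operatorname{Im}z>0$ the operator $-k^{-2}\Delta-n-z\psi$ differs from $-k^{-2}\Delta-n-\operatorname{Re}(z)\psi$ by the complex absorbing term $-i\operatorname{Im}(z)\psi$, which is sign-definite and elliptic on the set where $\psi\geq c$. Since that set geometrically controls the backward-trapped null bicharacteristics, a semiclassical positive-commutator / propagation-of-singularities argument of the type used for the damped wave equation and for complex absorbing potentials --- together with the non-trapping behaviour at spatial infinity and the absorbing nature of the $-i0$ boundary condition --- yields
\[
\big\|\chi(-k^{-2}\Delta-n-z\psi-i0)^{-1}\chi\big\|_{L^2\to L^2}\leq Ck
\qquad\text{for all }k\geq k_0,\ |\operatorname{Re}z|<\rho,\ \alpha\leq\operatorname{Im}z\leq\rho,
\]
with holomorphy in $z$ throughout; in particular every pole of the continuation lies in $\{\operatorname{Im}z\leq 0\}$. (The factor $k$ is forced, being the non-trapping bound; when $\psi$ is not single-signed the region where $\psi<0$ must be absorbed into the error term, which is where this step needs a little care.) For (ii), fix $z_0:=i\alpha$, so that $-k^{-2}\Delta-n-z\psi-i0=P_0\big(I-(z-z_0)P_0^{-1}\psi\big)$ with $P_0:=-k^{-2}\Delta-n-z_0\psi-i0$ invertible by (i). Inserting cut-offs supported near $\operatorname{supp}\psi$, the operator $K(z):=(z-z_0)\,\psi P_0^{-1}\psi$ is an entire family of compact operators on $L^2$, vanishing at $z=z_0$, and the regularised determinant $D(z)$ of order $p>d/2$ of $I-K(z)$ is entire with zeros in $|z|<\rho$ (counted with multiplicity) equal to the poles of $z\mapsto\chi(-k^{-2}\Delta-n-z\psi-i0)^{-1}\chi$. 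Since $K(z_0)=0$ we have $D(z_0)=1$, while $\log|D(z)|\lesssim\|K(z)\|_{\mathcal S_p}^p\lesssim k^{d+1+\epsilon}$ by Weyl's law for the singular values of $\psi P_0^{-1}\psi$ combined with the bound from (i). Jensen's formula on $|z|<2\rho$ then bounds the number of zeros in $|z|<\rho$ by $C\log\max_{|z|\leq 2\rho}|D(z)|\leq C_1k^{d+1+\epsilon}$, which is part (a).

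For (b), fix $\rho$ small enough that the above applies and let $w_1,\dots$ be the poles in $|z|<\rho$, all with $\operatorname{Im}w_j\leq 0$ and total multiplicity $n_0\leq C_1k^{d+1+\epsilon}$. The task is to estimate $\|\chi(-k^{-2}\Delta-n-z\psi-i0)^{-1}\chi\|_{L^2\to L^2}$ for real $z$ bounded away from the numbers $\operatorname{Re}w_j$. Expanding $(I-K(z))^{-1}$ in its Laurent series about each $w_j$ --- whose principal parts are finite-rank of rank at most the multiplicity $m_j$ --- and bounding the finite-rank coefficients and the holomorphic remainder through $D$, the bound from (i), and contour integrals on small circles chosen (after grouping clustered poles) to enclose only one pole or cluster, one arrives at a bound of the schematic form
\[
\big\|\chi(-k^{-2}\Delta-n-z\psi-i0)^{-1}\chi\big\|_{L^2\to L^2}\;\leq\;Ck^{B}\Big(1+\sum_j\frac{1}{|z-w_j|^{m_j}}\Big)\qquad(z\in\mathbb R)
\]
for a polynomial exponent $B$. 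Taking $S_k:=\bigcup_j\{z\in\mathbb R:|z-\operatorname{Re}w_j|<r\}$ with $r:=\delta k^{-N}/(2n_0)$ gives $|S_k|\leq\delta k^{-N}$, while for $z\notin S_k$ each $|z-w_j|\geq r$, so the sum is $\lesssim n_0 r^{-\max_j m_j}$; collecting the powers of $k$ via $n_0\leq C_1k^{d+1+\epsilon}$ yields \eqref{eq:resolvent}.

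I expect this last step to be the genuine obstacle. The abstract Fredholm machinery only controls $\|(I-K(z))^{-1}\|$ by a quantity exponential in $\|K(z)\|_{\mathcal S_p}^p\sim k^{d+1+\epsilon}$ --- hopelessly non-polynomial --- so the merely \emph{polynomial} loss in \eqref{eq:resolvent} has to be extracted from the finite-rank structure of the principal parts of $(I-K(z))^{-1}$ together with a careful accounting of how the poles $w_j$ can cluster (so that the contour radii used to estimate the Laurent coefficients stay polynomially bounded from below). Steps (i) and (ii) are, by comparison, standard; the interplay between the determinant estimate, the residue structure, and the measure of the removed set in (iii) is where the real work lies, and where the precise exponent $5(d+1)/2+N+\epsilon$ is pinned down.
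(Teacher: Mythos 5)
There is a genuine gap, in two places. First, in your step (ii) the bound $\log|D(z)|\lesssim \|K(z)\|_{\mathcal S_p}^p\lesssim k^{d+1+\epsilon}$ is asserted, not proved, and the natural accounting does not give it: with $K(z)=(z-z_0)\psi P_0^{-1}\psi$ the singular values near the characteristic variety $\{|\xi|^2=n\}$ are only controlled by the operator-norm bound from your step (i) (which is $O(k)$, not $O(1)$), and there are $\sim k^{d}$ of them, so the naive estimate is $\|K\|_{\mathcal S_p}^p\lesssim k^{d+p}$ with $p>d/2$, i.e.\ worse than $k^{d+1+\epsilon}$ for $d\ge 2$; pinning down anything better requires a genuinely finer analysis. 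The paper avoids this entirely: it does not build the determinant from the resolvent itself, but first makes the operator invertible by subtracting an artificial compactly microlocalized absorbing term $-iMQ$ with $Q=\chi(hD)\chi^2\chi(hD)$ (after complex scaling), so that the compact factor $K(z)=W^{-1}iMQ$ is trace class with $\|K(z)\|_{\mathcal L_1}\le Ch^{-d}$ \emph{independently} of any resolvent bound; the extra $k^{1+\epsilon}$ enters only through the \emph{lower} bound on $\log|\det(I+K(z_0))|$ at the special point $z_0=ih^{1+\epsilon}$, where the $1/\Im z$ bound applies. Note also that in the smooth case the paper's $1/\Im z$ estimate (Lemma \ref{lem:psi}) is only proved for $0<\Im z=o(h)$, because the defect-measure propagation theorem needs an $o(h)$-quasimode; your step (i), a bound with $\Im z$ bounded below by a fixed $\alpha>0$ (and the claim that all poles lie in $\{\Im z\le 0\}$, which does not follow from a bound on $\{\alpha\le\Im z\le\rho\}$ alone, and is delicate if $\psi$ is not globally nonnegative), is an additional unproved ingredient that the paper's scheme does not need.

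Second, and more decisively, part (b) is not proved. Your Laurent-expansion/residue strategy produces factors $r^{-m_j}$, and since multiplicities and clustering of the $\sim k^{d+1+\epsilon}$ poles are uncontrolled this is exponentially large rather than polynomial --- as you yourself acknowledge, this is exactly where the argument breaks, and no mechanism is offered to repair it. The missing idea is the semiclassical maximum principle of Tang--Zworski (Theorem \ref{thm:scmp}): a three-lines argument on thin rectangles $\big(w-2a(h),w+2a(h)\big)+i\big(-\delta(h)h^{-L},\delta(h)\big)$, which interpolates between (A) an \emph{exponential} bound $\exp\big(Ch^{-M}\log(1/a(h))\big)$ valid at distance $a(h)$ from the poles (Lemma \ref{thm:TZkey}, itself obtained from the trace-norm bounds together with the Jensen-type lower bound on the determinant at $z_0=ih^{1+\epsilon}$) and (B) the $1/\Im z$ bound on the top edge $\Im z=\delta(h)$, with $\delta(h)$ a small polynomial in $h$ so that the hypothesis $\Im z=o(h)$ of Lemma \ref{lem:psi} is satisfied. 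This yields the polynomial bound $C\delta(h)^{-1}$ on the real segment as soon as one stays $4a(h)$ away from the poles, and choosing $a(h)\sim\widetilde\delta h^{M+N}$ makes the excluded set of measure $\le\widetilde\delta h^{N}$; the exponent $5(d+1)/2+N+\epsilon$ comes precisely from the interplay $\delta(h)\sim h^{3L/2}a(h)$ with $L=M+\epsilon'$ and $M=d+1+\epsilon$. Without this (or an equivalent harmonic-majorant argument), the passage from ``few poles plus a bound in the upper half-plane'' to a polynomial bound off a small real set does not go through.
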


When $N=0$ in Part (b) of Theorem \ref{thm:main}, the set of $z$
excluded in \eqref{eq:resolvent} has arbitrarily small measure, independent of $k$. Choosing $N>0$ allows one to decrease the measure of this excluded set as $k$ increases (at the price of a larger exponent in the bound).

\begin{theorem}[Main result for discontinuous $n$]\label{thm:mainT}
Given a bounded open set $\obst$ with Lipschitz boundary and connected complement and $n_i>0$, let 
\beq\label{eq:n}
n := 
\begin{cases}
n_i & \text{ in } \obst,\\
1 & \text{ in } \mathbb{R}^d\setminus \overline{\obst}
\end{cases}
\eeq
Suppose that $\supp \psi \supset \obst$ and there exists $c>0$ such that $\psi \geq c$ on $\obst$ (observe that this includes the case $\psi = 1_\obst$). 

(a) Given $\smallp, k_0>0$ and $\chi \in C^\infty_{\rm comp}(\mathbb{R}^d)$, there exists $C_1>0$ such that, for all $k\geq k_0$, 
$z\mapsto \chi (-k^{-2} \Delta -n-z\psi-i0)^{-1}\chi$ extends meromorphically from $z\in (-\smallp,\smallp)$ to $z\in\mathbb{C}$, with the number of poles 
 in $\{ z \in \mathbb{C}, |z|< \smallp\}$ bounded by $C_1 k^{d+2}$. 

(b) 
Given $ \smallp, \epsilon, k_0,\delta>0$, $N\geq 0$, and $\chi \in C^\infty_{\rm comp}(\mathbb{R}^d)$, there exist $C_2>0$ such that for all $k\geq k_0$
there exists a set $S_k\subset (-\smallp,\smallp)$ with $|S_k|\leq \delta k^{-N}$ such that 
\begin{equation}
\label{eq:resolventT}
\| \chi (-k^{-2} \Delta -n- z\psi-i0)^{-1}\chi\|_{L^2 \to L^2} \leq \frac{C_2}{ \delta} k^{2 + 5(d+3)/2 +N+ \epsilon}
\text{ for all } z \in (-\smallp, \smallp)\setminus S_k.
\end{equation}
\end{theorem}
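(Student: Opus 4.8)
Here is the plan.

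The plan is to mirror the proof of Theorem~\ref{thm:main}, viewing $\chi R(z)\chi$ with $R(z):=(-k^{-2}\Delta-n-z\psi-i0)^{-1}$ as a meromorphic operator-valued function of $z$, and inserting the changes forced by the Lipschitz interface $\partial\obst$. For part (a): the equation $(-k^{-2}\Delta-n-z\psi)u=\chi f$ rewrites (using $\psi\geq 0$) as $(I-z\,\psi^{1/2}R(0)\psi^{1/2})(\psi^{1/2}u)=\psi^{1/2}R(0)\chi f$, and $\psi^{1/2}R(0)\psi^{1/2}$ is compact on $L^2$ since the transmission resolvent maps $L^2_{\rm comp}$ into $H^1_{\rm loc}$ (indeed into $H^{3/2-\epsilon}$ near $\partial\obst$ and into $H^2$ away from it); so analytic Fredholm theory supplies the meromorphic continuation of $z\mapsto\chi R(z)\chi$ to $\mathbb{C}$. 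A \emph{polynomial}-in-$k$ bound on the number of poles in $|z|<\smallp$ cannot be read off from the merely exponential a priori bound on $R(0)$; instead I would complex-scale outside $B(0,R_0)$ and, via a Grushin reduction, identify these poles with the generalised eigenvalues of a matrix pencil $M_0-zM_1$ whose size is the number of resonances of $-k^{-2}\Delta-n$ lying within $O(\smallp)$ of zero energy. A Weyl-type resonance count for the transmission problem bounds that size by $Ck^{d+2}$ (one power more than the $k^{d+1+\epsilon}$ of Theorem~\ref{thm:main}, reflecting the $H^{3/2-\epsilon}$ rather than $H^2$ interface regularity).

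For part (b) the aim is to control $\chi R(x)\chi$ for real $x$ by the values of $\chi R(z)\chi$ just off the real axis, away from the finitely many poles. Three inputs are needed. First, a quantitative control (observability) estimate: there is $a=a(d)$ with
\beqs
\|\chi R(z)\chi f\|_{L^2}\leq Ck^{a}\big(\|\psi^{1/2}R(z)\chi f\|_{L^2}+\|\chi f\|_{L^2}\big)
\eeqs
for $z$ in a fixed neighbourhood of $(-\smallp,\smallp)$ in $\mathbb{C}$. This is where the transmission structure enters and is the principal obstacle: one must propagate semiclassical defect measures for $-k^{-2}\Delta-n-z\psi$ through the Lipschitz interface (reflected, transmitted and glancing rays), splice this to the exterior non-trapping / radiation-condition estimate, and use that $\psi\geq c$ on $\obst$ --- the hypotheses $\supp\psi\supset\obst$ and $\psi\geq c$ on $\obst$ being precisely what is needed, since for a penetrable obstacle with connected complement any backward-trapped broken ray meets $\obst$ up to effects that are polynomially weak in $k$; the interface forces larger polynomial losses than in Theorem~\ref{thm:main}. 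Second, an energy identity: pairing $(-k^{-2}\Delta-n-z\psi)u=\chi f$ with $u=R(z)\chi f$ over a large ball and taking imaginary parts gives, for $\gamma:=\operatorname{Im} z>0$,
\beqs
\gamma\,\|\psi^{1/2}R(z)\chi f\|_{L^2}^2\leq \|\chi f\|_{L^2}\,\|\chi R(z)\chi f\|_{L^2}
\eeqs
(the radiated energy has the favourable sign because $\psi\geq 0$), and combining with the control estimate and Young's inequality yields $\|\chi R(z)\chi\|_{L^2\to L^2}\leq Ck^{2a}/\operatorname{Im} z$ for every $z$ with $\operatorname{Im} z\in(0,\smallp)$; there are no poles there, since an outgoing nullsolution would force (the same identity with $f=0$) zero radiation and $\psi^{1/2}u=0$, hence $u\equiv 0$. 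Third, the universal a priori bound $\|\chi R(z)\chi\|_{L^2\to L^2}\leq e^{Ck}$ for $z$ at distance $\geq e^{-Ck}$ from the poles.

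These are synthesised by a complex-analytic argument for the scalar functions $z\mapsto\langle\chi R(z)\chi\phi',\phi\rangle$, with $\phi,\phi'$ unit vectors. Let $z_1,\dots,z_P$, $P\leq Ck^{d+2}$, be the poles in $|z|<2\smallp$ (all with $\operatorname{Im} z_j\leq 0$), and subtract the principal parts: $\langle\chi R(z)\chi\phi',\phi\rangle=G(z)+\sum_j\mathrm{pp}_j(z)$ with $G$ holomorphic in $|z|<2\smallp$. Cauchy estimates on circles of radius $e^{-Ck}$ about the $z_j$, fed by the universal bound, make the Laurent coefficients of $\mathrm{pp}_j$ exponentially small; hence $|\mathrm{pp}_j|\leq C/\gamma$ on $\{\operatorname{Im} z=\gamma\}$, and $\sum_j|\mathrm{pp}_j(x)|\leq CP/r_k$ whenever $x$ is at distance $\geq r_k$ from every $z_j$. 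Consequently $|G|\leq Ck^{b}/\gamma$ on $\{\operatorname{Im} z=\gamma\}$ (with $b=\max(2a,d+2)$, by the second input) and $|G|\leq e^{Ck}$ on a circle $|z|=R$, $R\in(2\smallp,3\smallp)$, chosen to avoid the poles. A two-constants (harmonic-measure) estimate in $\{|z|<R\}\cap\{\operatorname{Im} z<\gamma\}$ --- where a point of $(-\smallp,\smallp)$ sees the far arc with harmonic measure $O((\gamma/\smallp)\log(\smallp/\gamma))$ --- interpolates these, and taking $\gamma\sim\smallp/k$ turns the $e^{Ck}$ term into a constant, leaving $|G(x)|\leq Ck^{b'}$ on $(-\smallp,\smallp)$. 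Choosing $r_k\sim\delta k^{-N}/(Ck^{d+2})$, so that $S_k:=\bigcup_j\{x\in(-\smallp,\smallp):|x-z_j|<r_k\}$ has $|S_k|\leq\delta k^{-N}$, and taking the supremum over $\phi,\phi'$, gives $\|\chi R(x)\chi\|_{L^2\to L^2}\leq(C/\delta)k^{\max(b',\,2(d+2)+N)}$ for $x\in(-\smallp,\smallp)\setminus S_k$; the exponent in \eqref{eq:resolventT} is what emerges from tracking $a$, the pole count $k^{d+2}$, the interface loss $\epsilon$, and $N$ through these estimates. (If $\smallp$ is not small, first cover $(-\smallp,\smallp)$ by $O(\smallp)$ short intervals and run the argument on a disk centred in each.)

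The one genuinely hard step is the first input of part (b): the quantitative propagation-of-singularities / control estimate for the transmission problem through the Lipschitz interface, with explicit polynomial dependence on $k$. Everything else is either soft (analytic Fredholm theory, the complex-analytic synthesis) or an adaptation of known machinery (complex scaling, Weyl resonance bounds, the universal exponential bound).
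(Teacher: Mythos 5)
Your high-level outline (meromorphy in $z$, a pole count, a $1/\Im z$ bound in the upper half-plane from the energy identity, an exponential bound away from poles, and a complex-analytic interpolation back to real $z$ off a small exceptional set) matches the paper's architecture, but the two ingredients you leave unproven are exactly the ones that carry the content, and the method you propose for the main one would not work. You identify as ``the one genuinely hard step'' a control estimate for $z$ near the real axis, to be proved by propagating semiclassical defect measures through the Lipschitz interface $\partial\obst$ (reflected, transmitted and glancing rays). Propagation of defect measures/singularities through a merely Lipschitz interface is not available, and the paper deliberately avoids it: the key input for the penetrable obstacle is Lemma \ref{lem:psiT}, proved by a Morawetz/Rellich multiplier argument (commuting with $x\cdot\nabla$ plus lower-order terms, Lemma \ref{lem:morid1int}, together with the radiation-condition inequality of Lemma \ref{lem:2.1} and the multiplicative trace inequality \eqref{eq:mult_trace}, with the hypothesis $\psi\geq c$ on $\obst$ entering through \eqref{eq:Green}). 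This gives $\|\chi(P-z\psi-i0)^{-1}\chi\|\leq Ch^{-2}(1+|z|^2)/\Im z$ for \emph{all} $\Im z>0$, with no interface regularity beyond Lipschitz and no microlocal analysis at $\partial\obst$; the $h^{-2}$ loss here (compared with the defect-measure bound \eqref{eq:key} available in the smooth case) is what produces the exponents $d+2$ in part (a) and $2+5(d+3)/2+N+\epsilon$ in part (b). Your plan replaces this lemma by an estimate you cannot prove by the route you indicate, so the proposal has a genuine gap at its core.

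The other two asserted inputs are also not off-the-shelf. For part (a), identifying the $z$-poles with eigenvalues of a matrix pencil of size equal to the number of resonances of $-k^{-2}\Delta-n$ near energy $1$, and invoking ``a Weyl-type resonance count'' of size $Ck^{d+2}$, is unsubstantiated: the perturbation $z\psi$ is compactly supported, so $z$ is not a spectral parameter and no such correspondence is known; moreover the stated explanation of the exponent via $H^{3/2-\epsilon}$ interface regularity does not reflect the actual mechanism. In the paper the count comes from writing $P_\theta-z\psi=W(I+K(z))$ with $K(z)$ trace class of norm $O(h^{-d})$, applying Jensen's formula to $\det(I+K(z))$, and obtaining the crucial lower bound at the reference point $z_0=i\smallp/2$ from the same Morawetz bound \eqref{eq:key2T}; that is where $k^{d+2}$ comes from. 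Similarly, your ``universal a priori bound $e^{Ck}$ at distance $e^{-Ck}$ from the poles'' in the $z$-variable is not citable; it is Lemma \ref{thm:TZkey} in the paper, proved by the same determinant machinery (parts (ii)--(iii) of Theorem \ref{thm:trace} plus the Jensen-type inequality \eqref{eq:missingterm}), again anchored by the determinant lower bound at $z_0=i\smallp/2$. Once these are in place, your principal-part-subtraction/harmonic-measure synthesis could plausibly substitute for the semiclassical maximum principle (Theorem \ref{thm:scmp}), though you would need to handle clustered and higher-order poles when applying Cauchy estimates; but as written the proposal assumes precisely the quantitative statements that constitute the proof.
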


We highlight that the dynamical assumption in Theorem \ref{thm:mainT} is related to that  in
  Theorem~\ref{thm:main}.
Indeed, in the case of discontinuous, piecewise-constant $n$ (as in \eqref{eq:n}), the dynamics of null bicharacteristics is
  that of straight-line motion (arising from a constant potential) away from the
  obstacle, but \emph{internal refraction} at the boundary can produce trapping
  of rays in the manner of whispering gallery solutions (see, e.g., \cite{PoVo:99} and \S\ref{sec:QR} below).  
The assumption of Theorem \ref{thm:mainT} that $\psi$ is strictly
  positive on the obstacle therefore implies that it is strictly positive on all the backwards trapped null bicharacteristics, 
 and hence geometrically controls them. 

\subsection{Application of the main results to problems with quasi-resonances}\label{sec:QR}

When $n(x)-1$ decays sufficiently quickly as $|x|\to \infty$,  
then $\chi (-k^{-2} \Delta -n-i0)^{-1}\chi$ has poles (i.e., resonances) as a function of (the complex extension of) $k$ that are close to the real axis. 

Indeed, \cite[Theorem 1]{Ra:71} showed that if $n\in C^\infty$ is radial and 
$|x| \sqrt{n(|x|)}$ is not monotonically increasing (i.e., at some point $n(x)$ decreases faster than $|x|^{-2}$), then there exist 
a sequence of poles of $\chi (-k^{-2} \Delta -n-i0)^{-1}\chi$
exponentially close to the real axis; i.e., there exist poles $\{z_j\}_{j=1}^\infty$ such that $z_j\in\mathbb{C}$ with $\Re z_j \geq 1$, $|z_j|\to\infty$ as $j\to\infty$, 
and $|\Im z_j| \leq C_1 \exp( -C_2 |z_j|)$ for all $j$.
 In the penetrable-obstacle case, if $\obst$ is smooth and uniformly convex and $n_i>1$ then \cite[Theorem 1.1]{PoVo:99} showed that 
there exist a sequence of poles 
of $\chi (-k^{-2} \Delta -n-i0)^{-1}\chi$
superalgebraically close to the real axis; these are related to the classic ``whispering-gallery modes" (see, e.g., \cite{BaBu:91}).

By \cite[Theorem 1]{St:00}, the existence of poles  superalgebraically close to real axis implies the existence of quasimodes with superalgebraically small error; i.e., in both the cases mentioned above,
 there exist $\{k_j\}_{j=1}^\infty$, with $k_j\to \infty$ as $j\to\infty$, such that, given $N>0$ there exists $C_N>0$ such that, for all $j$, 
\begin{equation}\label{eq:superalgebraic}
\| \chi (-k_j^{-2} \Delta -n-i0)^{-1}\chi \|_{L^2\to L^2} \geq C_N (k_j)^N;
\end{equation}
these $k_j$ are often called \emph{quasi-resonances}.
(Note that, in the penetrable-obstacle case when $\obst$ is a ball, exponential growth of the solution operator through the quasi-resonances is shown in \cite{Ca:12, CaLePa:12, AlCa:18}.)

The bounds \eqref{eq:resolvent} and \eqref{eq:resolventT} applied to the two situations above with $k=k_j$ show that, 
for fixed $j$ sufficiently large (depending on the size of $C_N$), 
for most
$z$ (more precisely, for all $z$ apart from a set of arbitrarily small measure), the bound \eqref{eq:superalgebraic} does not hold. 

We highlight that our result does not show that quasi-resonances \emph{disappear} under perturbation. For instance, for a penetrable obstacle, for each (fixed) perturbed wave speed, there still exist frequencies through which \eqref{eq:superalgebraic} holds (by  \cite{St:00, Ca:12, CaLePa:12, AlCa:18}).

The instability of 
quasi-resonances is illustrated qualitatively for low frequencies in Figures \ref{fig:QR1} and \ref{fig:QR2} 
in the setting of Theorem \ref{thm:mainT}. Figures \ref{fig:QR1} and \ref{fig:QR2} both plot the absolute value of the wave scattered by an incident plane wave for $n=n_i 1_\obst$, $\psi=1_\obst$, and $\obst =B(0,1)$. 
In this case, the solution can be written down explicitly in terms of Fourier series and Bessel/Hankel functions, and the quasi-resonances expressed as zeros of a combination of Bessel/Hankel functions. At least for small $k$, both the quasi-resonances and the solution can thereby be computed accurately; this was done in \cite[\S6.2]{MoSp:19}, and Figures \ref{fig:QR1} and \ref{fig:QR2} are plotted using the same MATLAB code.

%
%

\begin{figure}[h]
\includegraphics[width=.49\textwidth
, clip, trim={80 30 45 30}]{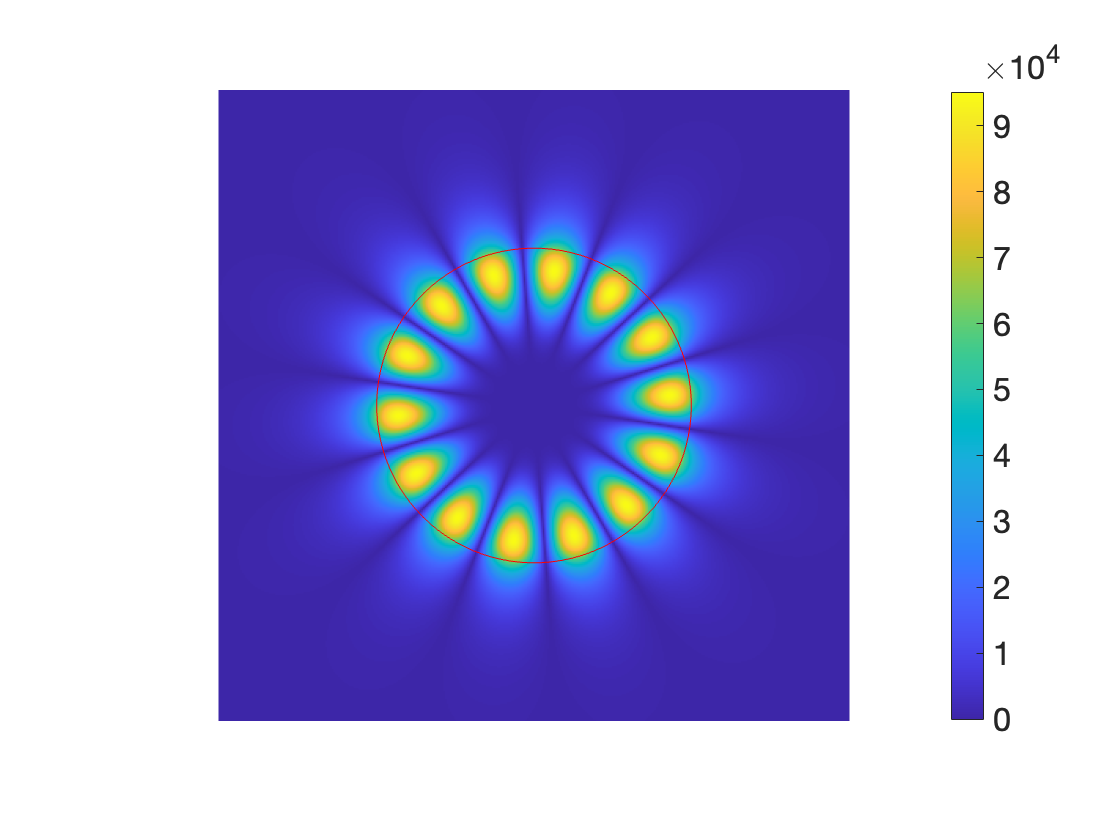}
\includegraphics[width=.49\textwidth
, clip, trim={80 30 45 30}]{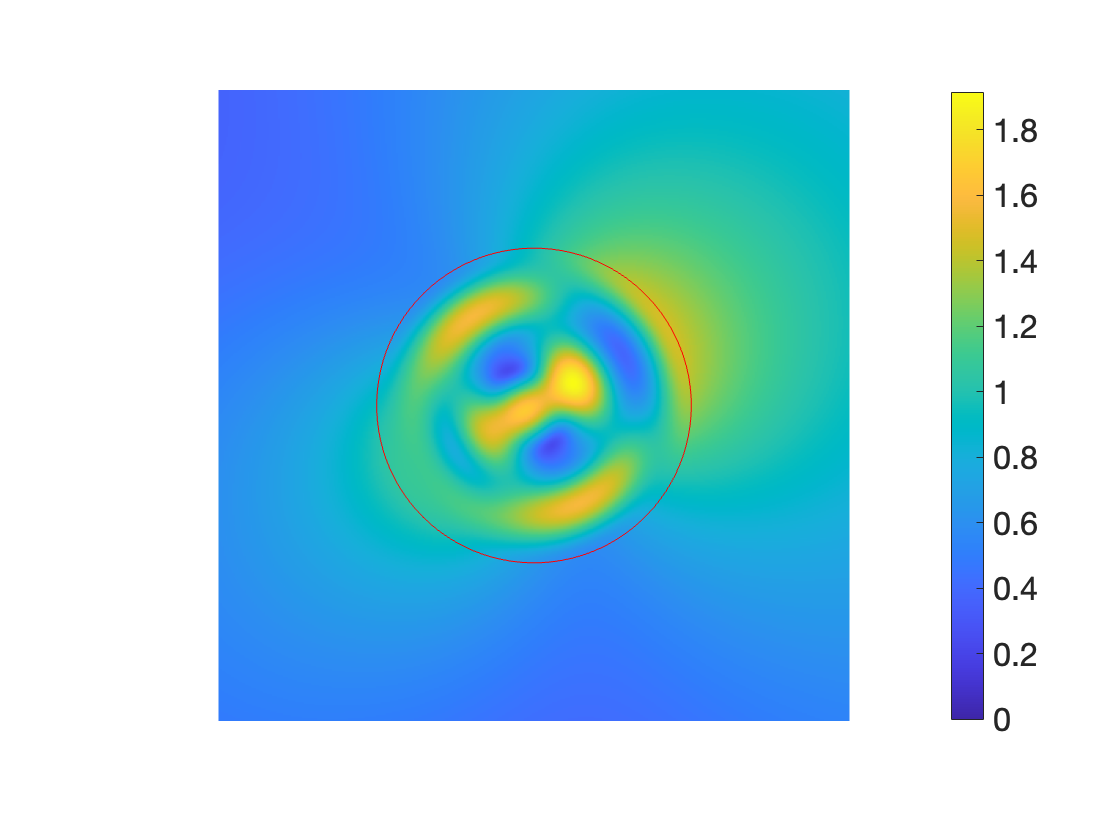}
$n_i=100$, $z=0$ \hspace{4cm} $n_i=100$, $z=0.01$ 

\caption{The absolute value of the field scattered by the plane wave $\exp(i k(x\cos(\pi/6) + y\sin(\pi/6)))$ 
hitting the penetrable obstacle $\obst = B(0,1)$ with $n_i=100$ 
(the red line denotes the boundary $\obst$.) The frequency is $k=0.992772133752486$, which is (an approximation to) a quasi-resonance for the problem when $n_i=100$. The left plot corresponds to $n_i=100$ and $z=0$ (i.e., the setting of the quasi-resonance) and the right plot corresponds to $n_i=100$ and $z=0.01$ (i.e., a small perturbation of the wave speed). We highlight the different scales on the colour bars. 
}\label{fig:QR1}
\end{figure}

\begin{figure}[h]
\includegraphics[width=.49\textwidth
, clip, trim={80 30 45 30}]
{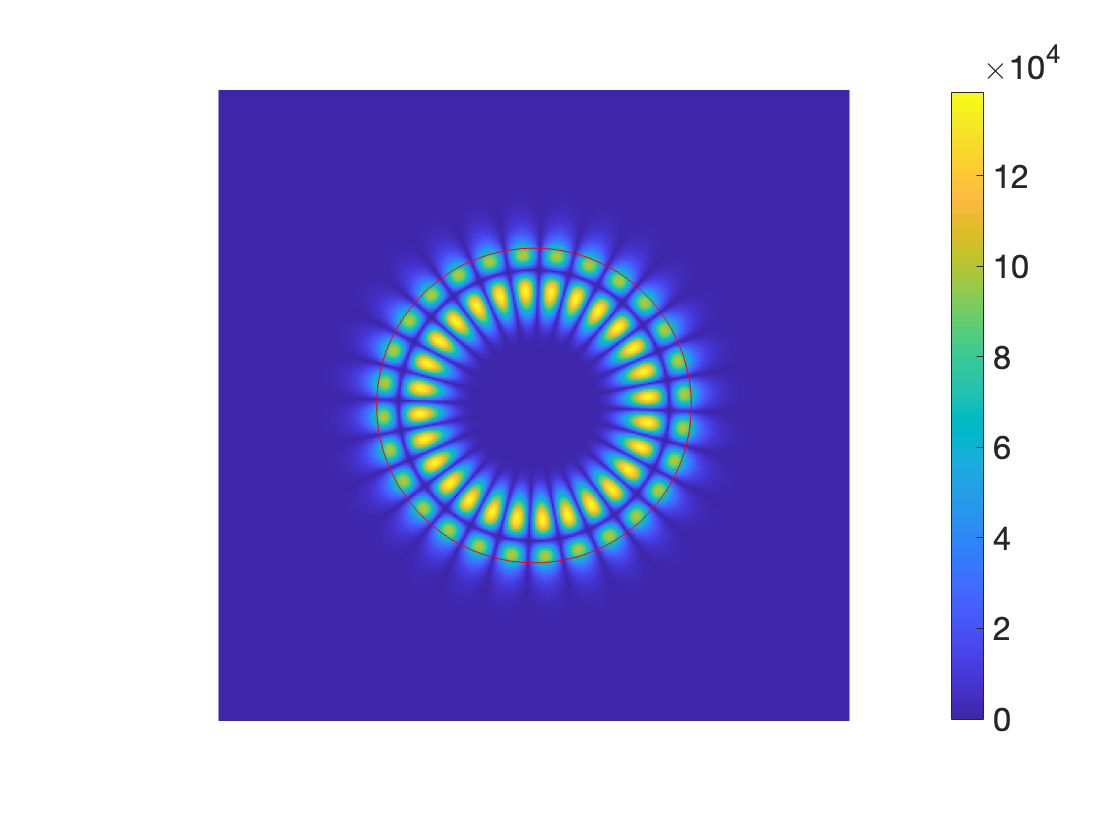}
\includegraphics[width=.49\textwidth
, clip, trim={80 30 45 30}]
{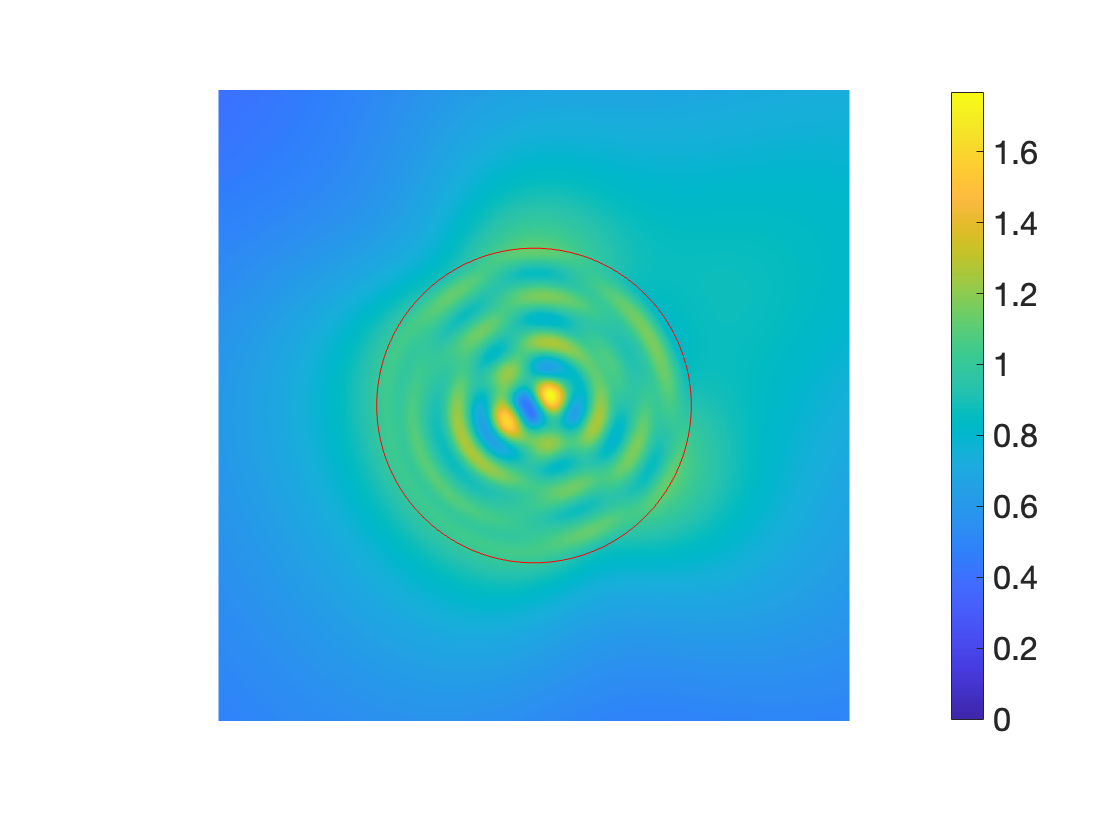}
$n_i=100$, $z=0$ \hspace{4cm} $n_i=100$, $z=0.01$ 
\caption{Same as Figure \ref{fig:QR1} except now $k=2.19476917403094$, which is also (an approximation to) a quasi-resonance when $n_i=100$.}
\label{fig:QR2}
\end{figure}

There has been sustained interest in the mathematics and physics communities in studying the stability/perturbation of resonances. 
Quantitative results about how the resonances behave under small perturbations of the wave speed and/or domain in specific situations are given in, e.g, 
\cite{Ra:80}
\cite[\S3]{AlHo:84}, \cite{HoSi:96, 
HeBeKoWe:08, Mc:17, AmDaFiMi:20, AmDaFiMi:21},
and rather general results about resonance stability and simplicity under perturbation are given by, e.g., 
\cite{St:94, KlZw:95, AmTr:04, Sj:14, Xi:23}.
Theorems \ref{thm:main} and \ref{thm:mainT} give a new perspective, complementary to these existing ones, on the (in)stability of resonances under perturbations of the wave speed; indeed, here we are concerned not just with the location of resonances, but with the solution-operator norm in their vicinity.

The next subsection describes how Theorems \ref{thm:main} and \ref{thm:mainT} are proved using techniques originally introduced to show that existence of quasimodes with superalgebraically small error implies existence of resonances superalgebraically close to the real axis \cite{StVo:95, StVo:96, TaZw:98}.

\subsection{The ideas behind the proofs of Theorems \ref{thm:main} and \ref{thm:mainT}}\label{sec:ideas}

Part (b) of  Theorem \ref{thm:main}/\ref{thm:mainT} can be viewed as a counterpart to the solution-operator bound in \cite[Theorem 3.3]{LSW1}. Indeed \cite[Theorem 3.3]{LSW1} showed that the solution operator for a wide variety of scattering problems is polynomially bounded in $k$ for ``most" $k$ \footnote{We note that, under an additional assumption about the location of resonances, a similar result with a larger polynomial power can also be extracted from \cite[Proposition 3]{St:01} by using the Markov inequality.}
. Applied to a problem with quasi-resonances, this result shows that the superalgebraic growth of the solution through $\{k_j\}_{j=1}^\infty$ is unstable with respect to ``most'' perturbations in $k$. 
Part (b) of Theorem \ref{thm:main}/\ref{thm:mainT} shows that this superalgebraic growth is unstable with respect to ``most"  single-signed perturbations of the wave speed.

The proofs of Part (b) of Theorem \ref{thm:main}/\ref{thm:mainT} follow the same outline as the proof of \cite[Theorem 3.3]{LSW1}. Indeed, 
the ingredients of the proof of \cite[Theorem 3.3]{LSW1} are 
\begin{enumerate}
\item the semiclassical maximum principle -- a consequence of the Hadamard three-lines theorem of complex analysis, and originally used in \cite{TaZw:98}, 
\item a bound on the number of poles of $(P-z)^{-1}$ where $P= -k^{-2} n^{-1}\Delta$ -- see \cite[Theorems 4.13 and 7.4]{DyZw:19} and the overview in \cite[\S7.6]{DyZw:19} on the large literature on proving such a bound, 
\item  a bound on $(P-z)^{-1}$, with $z$ a prescribed distance away from poles (coming from \cite{StVo:95, StVo:96, TaZw:98}), and 
\item the bound $\|(P-z)^{-1}\|_{L^2\to L^2}\leq 1/(\Im z)$ for $\Im z>0$, coming, e.g., from considering the pairing $\langle (P-z)u,u\rangle_{L^2}$ and then using self-adjointness of $P$. 
\end{enumerate}

In our setting, however, we need to prove analogues of (2)-(4) with the 
additional complication that the perturbation $z\psi$, unlike a
spectral parameter, is not supported everywhere (since $\psi$ has compact support). 
Note that the analogue of Point (2) is then Part (a) of Theorems \ref{thm:main} and \ref{thm:mainT}.

The assumption in Theorems \ref{thm:main} and \ref{thm:mainT} that $\psi>0$ in a suitable region comes from Ingredient (4) -- this sign condition on $\psi$ ensures there is a half-space in $z$ where one can obtain a bound with $1/(\Im z)$ on the right-hand side (see Lemmas \ref{lem:psi} and \ref{lem:psiT} below).
Indeed, the imaginary part of the pairing $\langle (-k^{-2}\Delta - n -z\psi)u,u\rangle_{L^2}$ gives information about $u$ on the support of $\psi$; we then propagate this information off $\supp \psi$ via a commutator argument. In the proof of 
Theorem \ref{thm:main} this commutator argument occurs in the setting of (semiclassical) defect measures (with our default references \cite{Zw:12, DyZw:19}); see Lemma \ref{lem:psi} below.
In the proof of Theorem \ref{thm:mainT} we commute with $x\cdot \nabla$ (plus lower-order terms); see Lemma \ref{lem:psiT} below. Recall that this commutator was pioneered by Morawetz in the setting of obstacle scattering \cite{MoLu:68, Mo:75}, with the ideas recently transposed to the penetrable obstacle case in \cite{MoSp:19}.

\subsection{Discussion of Theorems \ref{thm:main} and \ref{thm:mainT} in the context of uncertainty quantification}

One motivation for proving Theorems \ref{thm:main} and \ref{thm:mainT} comes from uncertainty quantification (UQ). The forward problem in UQ of PDEs is to compute statistics of quantities of interest involving PDEs \emph{either} posed on a random domain \emph{or} having random coefficients.

A crucial role in UQ theory is understanding regularity of the solution $u$ with respect to $\newy$, where 
$\newy$ is a vector of parameters governing the randomness and the problem is posed 
in the abstract form 
$P(\newy) u(\newy) = f$,
with $P$ a differential or integral operator. 
Indeed, some of the strongest UQ convergence results are obtained by 
proving that $u$ is holomorphic with respect to (the complex extension of) $\newy$,
or by proving equivalent bounds on the derivatives;
see, e.g.,  \cite[Theorem 4.3]{CoDeSc:10}, \cite{CoDeSc:11},  \cite[Section 2.3]{KuSc:13}.
This parametric holomorphy 
allows one to establish
rates of convergence 
for 
stochastic-collocation/sparse-grid
schemes, see, e.g., \cite{ChCoSc:15,CaNoTe:16, HaHaPeSi:18}, quasi-Monte Carlo (QMC) methods, see, e.g., 
\cite{Sc:13, DiKuLeNuSc:14, DiKuLeSc:16, KuNu:16, HaPeSi:16}, Smolyak quadratures, see, e.g., \cite{ZeSc:20}, and deep-neural-network approximations of the solution; 
see, e.g., \cite{ScZe:19, OpScZe:21, LoMiRuSc:21}.

At least for the Dirichlet obstacle problem, existence of super-algebraically small quasimodes for $\chi(-k^{-2}\Delta - 1-i0)^{-1}\chi$ (i.e., \eqref{eq:superalgebraic}) implies the existence of a pole of $z\mapsto \chi (-k^{2}\Delta - 1 -z1_{B(0,R)}-i0)^{-1}\chi$ superalgebraically-close to the origin by \cite[Theorem 1.5]{GaMaSp:21} (see also \cite[Theorem 1.11]{SW1}); we expect the analogous result to be true for the variable-wave-speed problem \eqref{eq:Helmholtz} when $n$ is smooth (indeed, the propagation arguments are simpler in the case when there is no boundary).

Part (a) of Theorem \ref{thm:main} and Part (a) of Theorem \ref{thm:mainT}
immediately imply that, given $k>0$, for ``most'' $z \in (-\epsilon,\epsilon)$ the map $z\mapsto \chi (-k^{-2} \Delta -n-z\psi-i0)^{-1}\chi$ 
is holomorphic in a ball of radius $\sim k^{-M}$; i.e., the bad
behaviour exhibited above by \cite[Theorem 1.5]{GaMaSp:21} is rare. 

\begin{corollary}[Holomorphy of solution operator in $B(z,Ck^{-M})$ for ``most" $z$]\label{cor:holo}

\

(a) Under the assumptions of Theorem \ref{thm:main}, 
given $\smallp, \epsilon,\delta,k_0>0$ and $\chi \in C^{\infty}_{\rm comp}(\Rea^d)$,  
there exists $C>0$ such that, for all $k\geq k_0$, there exists $S_k\subset (-\smallp,\smallp)$ with $|S_k|<\delta$ such that for all $z_0\in (-\smallp,\smallp)\setminus S_k$, the map 
$z\mapsto \chi (-k^{-2} \Delta -n-z\psi)^{-1}\chi$ 
is holomorphic in $B(z_0,Ck^{-(d+1+\epsilon)})$.

(b) Under the assumptions of Theorem \ref{thm:mainT}, 
given $\smallp,\delta,k_0>0$
and $\chi \in C^{\infty}_{\rm comp}(\Rea^d)$,  there exists $C>0$ such that, for all $k\geq k_0$, there exists $S_k\subset (-\smallp,\smallp)$ with $|S_k|<\delta$ such that for all $z_0\in (-\smallp,\smallp)\setminus S_k$, the map 
$z\mapsto \chi (-k^{-2} \Delta -n-z\psi)^{-1}\chi$ 
is holomorphic in $B(z_0,Ck^{-(d+2)})$.
\end{corollary}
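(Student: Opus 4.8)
The plan is to deduce Corollary \ref{cor:holo} directly from Part (a) of Theorems \ref{thm:main} and \ref{thm:mainT} by a soft covering/counting argument; once the pole-count bounds of those theorems are in hand there is essentially no analysis left to do. Abbreviate $M = d+1+\epsilon$ in case (a) and $M = d+2$ in case (b). First I would apply Part (a) of the relevant theorem, but with its parameter $\smallp$ replaced by $\smallp + 1$, so that for every $k \geq k_0$ the sandwiched solution operator extends to a meromorphic $L^2\to L^2$ operator-valued function on all of $\mathbb{C}$ with at most $C_1 k^M$ poles inside $\{|z| < \smallp + 1\}$ (the constant $C_1$ depending only on $\smallp,\epsilon,k_0,\chi$). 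Enlarging the disc in this way is harmless, since that parameter is arbitrary in those theorems, and is there only to absorb the boundary effect of balls centred near $\pm\smallp$ sticking slightly outside $\{|z| < \smallp\}$.

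Next, list the poles inside $\{|z| < \smallp+1\}$ as $w_1(k), \dots, w_{P(k)}(k)$ with $P(k) \le C_1 k^M$, and observe that a ball $B(z_0, r)$ with $z_0 \in \Rea$ contains $w_j(k)$ only if $z_0$ lies in the real interval $\{x : |x - w_j(k)| < r\}$, whose length is at most $2r$. Hence the bad set $S_k := \{ z_0 \in (-\smallp,\smallp) : B(z_0, r) \text{ contains some } w_j(k)\}$ has Lebesgue measure at most $2 r P(k) \le 2 C_1 k^M r$. Taking $r = C k^{-M}$ with $C$ a sufficiently small multiple of $\delta/C_1$ — and also small enough that $C k_0^{-M} < 1$, so that each such ball remains inside $\{|z| < \smallp+1\}$ — makes $|S_k| < \delta$ uniformly in $k$, while the radius $C k^{-M}$ is of the advertised form. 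For $z_0 \in (-\smallp,\smallp)\setminus S_k$, the meromorphic continuation has no pole in $B(z_0, Ck^{-M})$ and is therefore holomorphic there, which is the assertion.

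The only points requiring mild care — the closest this argument comes to an obstacle — are the following. First, the constant $C$ in the conclusion is permitted to depend on $\delta$ (as well as on $\smallp,\epsilon,k_0,\chi$), and this is exactly what makes the counting close, since otherwise the blocked measure is only $O(1)$ rather than $o(1)$. Second, the boundary/enlargement bookkeeping described above, which is purely formal. Third, one must read the continued family $z \mapsto \chi(-k^{-2}\Delta - n - z\psi)^{-1}\chi$ in the statement of the corollary as precisely the meromorphic extension furnished by Part (a), so that ``holomorphic on $B(z_0,Ck^{-M})$'' is synonymous with ``$B(z_0,Ck^{-M})$ contains none of the $w_j(k)$''. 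None of these steps needs an idea beyond Theorems \ref{thm:main}(a) and \ref{thm:mainT}(a).
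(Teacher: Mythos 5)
Your argument is correct and is essentially the paper's own proof: both apply Part (a) of the relevant theorem on a slightly enlarged disc (the paper phrases this as a $k$-independent neighbourhood of $(-\smallp,\smallp)$) to count the poles, remove from $(-\smallp,\smallp)$ the union of intervals of radius $Ck^{-M}$ about each pole with $C$ a small multiple of $\delta/C_1$, and conclude holomorphy on $B(z_0,Ck^{-M})$ for the remaining $z_0$ since such a ball contains no pole of the meromorphic continuation. The only difference is cosmetic bookkeeping, so no further comment is needed.
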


\begin{proof}
We prove the result in Part (a); the proof of the result in Part (b) is completely analogous. 
By Theorem \ref{thm:main}, given $\epsilon>0$, in a $k$-independent neighbourhood of $(-\smallp,\smallp)$ there are at most $C_1 k^{d+1+\epsilon}$ poles $\mathcal{P}$. Let $C := \delta/(2C_1)$, and let
\[S_k:= \bigcup_{p\in\mathcal{P}}(-\smallp,\smallp)\cap B(p,Ck^{-(d-1-\epsilon)}).\]
Then, since $|(-\smallp,\smallp)\cap B(p,Ck^{-(d-1-\epsilon)})|\le 2Ck^{-(d-1-\epsilon)}\le\delta C_1^{-1}k^{-(d-1-\epsilon)}$ for any $p$, we have $|S_k|\le |\mathcal{P}|\delta C_1^{-1}k^{-(d-1-\epsilon)}\le \delta$; furthermore, by definition, if $z_0\in(-\smallp,\smallp)\backslash S_k$, then $B(z_0,Ck^{-(d-1-\epsilon)})$ does not contain a pole.
\end{proof}

\subsection{Outline of the paper}
Section \ref{sec:mero} contains 
results about meromorphic continuation and complex scaling.
Section \ref{sec:poly_bound} proves Part (a) of Theorems \ref{thm:main} and \ref{thm:mainT}
(a polynomial bound on the number of poles).
Section \ref{sec:resolvent} proves Part (b) of Theorems \ref{thm:main} and \ref{thm:mainT} (a bound on the solution operator for real $z$). 
Section \ref{app:A} recaps relevant results from semiclassical analysis.
Section \ref{app:B} recaps relevant results about Fredholm and trace-class operators.

\section{Results about meromorphic continuation and complex scaling}\label{sec:mero}

We replace the
large spectral parameter $k$ by a small parameter
$$
h:= k^{-1},
$$
and let 
\beq\label{eq:P}
P:= -h^2 \Delta -n = -h^2\Delta +(1-n)-1;
\eeq
intuitively, then, we are thinking about the semiclassical
Schr\"odinger operator with compactly supported potential $1-n$, considered at energy $1$.

Note that for $z \in \RR$, $P-z\psi$ is essentially self-adjoint,
hence for $\ep>0$,
$$
(P-z\psi-i\ep )^{-1}: L^2(\Rea^d) \to H^2(\Rea^d).
$$
\begin{lemma}[Limiting absorption principle]
\label{lem:allgood}
Let $\psi \in L^\infty_{\rm comp}(\mathbb{R}^d;\mathbb{R})$. 
For $z\in \mathbb{R}$, 
$$(P-z\psi-i0)^{-1}:= \lim_{\ep\downarrow 0}
(P-z\psi-i\ep )^{-1} : L^2_{\rm comp}(\mathbb{R}^d)\to H^2_{\rm
  loc}(\mathbb{R}^d)$$ and, given $f\in L^2_{\rm comp}(\mathbb{R}^d)$,  
$u:=(P-z\psi-i0)^{-1}f$ is the unique solution to $(P-z\psi)u=f$ satisfying the Sommerfeld radiation condition \eqref{eq:src}.
\end{lemma}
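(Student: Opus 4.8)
The plan is to establish Lemma~\ref{lem:allgood} by combining the standard limiting-absorption-principle machinery for the free (and compactly-perturbed) Laplacian with a perturbation argument absorbing the term $z\psi$. First I would observe that since $\psi\in L^\infty_{\rm comp}(\mathbb{R}^d;\mathbb{R})$, the operator $P - z\psi = -h^2\Delta + (1-n) - z\psi - 1$ differs from $-h^2\Delta - 1$ only by the compactly supported, bounded, real potential $V := (1-n) - z\psi$; hence all the classical results for Schr\"odinger operators with bounded compactly supported potentials apply. In particular $P - z\psi$ is essentially self-adjoint on $C^\infty_{\rm comp}(\mathbb{R}^d)$ with domain $H^2(\mathbb{R}^d)$, and for $\ep>0$ the resolvent $(P-z\psi-i\ep)^{-1}: L^2 \to H^2$ is bounded with operator norm $\lesssim \ep^{-1}$ by the spectral theorem.

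The core step is the existence of the limit $\lim_{\ep\downarrow 0}(P-z\psi-i\ep)^{-1}f$ in $H^2_{\rm loc}$ for $f\in L^2_{\rm comp}$. I would do this via the resolvent identity relative to the free resolvent $R_0(i\ep) := (-h^2\Delta - 1 - i\ep)^{-1}$: writing $R(i\ep) := (P-z\psi-i\ep)^{-1}$, we have
\begin{equation*}
R(i\ep) = R_0(i\ep) - R_0(i\ep)\, V\, R(i\ep),
\end{equation*}
or equivalently, after localizing with cutoffs $\chi,\chi_1\in C^\infty_{\rm comp}$ equal to $1$ on $\supp V$ and on $\supp f$, one reduces to inverting $I + \chi R_0(i\ep) V$ on $L^2(B)$ for a large ball $B$. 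The free resolvent's limiting absorption principle (e.g.\ \cite[Theorem 3.1]{DyZw:19}, or Agmon's classical result) gives that $\chi R_0(i0)\chi_1$ exists as a bounded operator $L^2_{\rm comp}\to H^2_{\rm loc}$, and that $\chi R_0(i0)\chi$ is compact on $L^2(B)$. Analytic Fredholm theory then shows $I + \chi R_0(i0)V$ is invertible on $L^2(B)$ provided it is injective there; injectivity follows from the uniqueness statement (Rellich's uniqueness theorem plus unique continuation), since any $w$ in the kernel would, via $w = -R_0(i0)(Vw)$, be an outgoing solution of $(P-z\psi)w = 0$, hence $\equiv 0$. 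Passing $\ep\downarrow 0$ through the (now uniformly invertible) Fredholm equation gives the limit, defines $(P-z\psi-i0)^{-1}f$, and shows it lands in $H^2_{\rm loc}$.

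Finally I would check that $u := (P-z\psi-i0)^{-1}f$ satisfies the Sommerfeld radiation condition~\eqref{eq:src}: away from $\supp V \cup \supp f$ we have $(-h^2\Delta - 1)u = 0$, so $u$ is a finite superposition/integral of outgoing Hankel-type solutions inherited from $R_0(i0)$, which satisfy \eqref{eq:src} by the known asymptotics of the free outgoing resolvent kernel. Uniqueness of the outgoing solution is then exactly the Rellich--uniqueness argument used above for injectivity. I expect the main (though entirely standard) obstacle to be the careful bookkeeping in transferring the limiting absorption principle from the free resolvent to $P - z\psi$ uniformly as $\ep\downarrow0$ — i.e.\ verifying that the Fredholm inverse $(I + \chi R_0(i\ep)V)^{-1}$ converges in operator norm — together with confirming that the outgoing/radiation condition is preserved under the compact perturbation; since $\psi$ is merely $L^\infty$ rather than smooth, one must phrase these arguments so they use only boundedness and compact support of $\psi$, not any regularity, which is why I would route everything through the free resolvent rather than through pseudodifferential calculus.
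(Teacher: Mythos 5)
Your proposal is correct in outline, but it proves the lemma from scratch whereas the paper does not prove it at all: the paper's ``proof'' is a pointer to the literature, citing \cite{DyZw:19} (Theorem 3.8 in odd dimensions, Theorem 4.4 in general dimension) for existence of the limit, \cite{DyZw:19} (Theorems 3.33 and 3.37) for uniqueness and the radiation condition, and \cite{RoTa:14} for the precise weighted-space form of the limiting absorption convergence. Your Lippmann--Schwinger argument relative to the free resolvent is exactly the standard mechanism behind those citations, and in fact it closely mirrors what the paper itself does one lemma later: the factorization you propose, inverting $I+\chi R_0 V$ on a ball, is (up to sign conventions, since $V=-(n-1+z\psi)$) the same as the paper's identity \eqref{factorization} and the operator $I-K_\chi(z)$ in the proof of Lemma \ref{lem:meromorphic}, and your injectivity argument (write the kernel element as $-R_0(i0)(Vw)$, observe it is an outgoing solution of $(P-z\psi)w=0$, apply Rellich) is the same as the paper's $z=0$ invertibility argument there. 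Three small remarks: you only need the Fredholm alternative, not analytic Fredholm theory, since no parameter analyticity enters at this stage; to pass from Rellich's vanishing outside a large ball to $w\equiv 0$ one needs unique continuation for $L^\infty$ coefficients (which you correctly flag, and which the paper invokes via \cite{JeKe:85} in Lemma \ref{lem:Fredholm}); and the convergence of the Fredholm inverses as $\epsilon\downarrow 0$ rests on operator-norm convergence of $\chi R_0(i\epsilon)\chi'$ to $\chi R_0(i0)\chi'$, which is part of the free-resolvent limiting absorption principle at the positive energy $1$ and is valid in every dimension $d\geq 1$, so your reduction is sound. What the two routes buy: the paper's citation route is shorter and yields the sharper global statement in weighted spaces, while your route is self-contained, uses only boundedness and compact support of $\psi$ and $1-n$, and recycles machinery the paper needs anyway for the meromorphic continuation in $z$.
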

  
\begin{proof}[References for the proof]
The existence of $(P-z\psi-i0)^{-1}$ follows from \cite[Theorem
3.8]{DyZw:19} (in odd dimension) and \cite[Theorem 4.4]{DyZw:19} (for
general dimension). Uniqueness and the radiation condition follow from
\cite[Theorems 3.33 and 3.37]{DyZw:19} (noting that these results
don't require the dimension to be odd---see \cite[p.251]{DyZw:19} for
remarks on this extension of results for odd dimensional potential
scattering to the ``black box'' setting in arbitrary dimension). 
The convergence as $\ep\downarrow 0$ holds in the strong operator topology, which has
  a more precise global statement in terms of appropriately
  \emph{weighted} Hilbert spaces; see, e.g., \cite[Proposition
  1.2]{RoTa:14} for a careful discussion.
\end{proof}

We now show meromorphic continuation. For $\chi\in C^\infty_{\rm comp}(\mathbb{R}^d)$ and $z\in\mathbb{R}$, let
\[\resolve_\chi(z) = (P-z\psi-i0)^{-1}\chi: L^2(\mathbb{R}^d)\to H^2_{\rm loc}(\mathbb{R}^d).\]
\begin{lemma}[Meromorphic continuation]
\label{lem:meromorphic}
Let $\psi \in L^\infty_{\rm comp}(\mathbb{R}^d;\mathbb{R})$. 

(i) For all
$\chi \in C^\infty_{\rm comp}(\mathbb{R}^d)$,
$\resolve_\chi(z)$ extends from $z \in\RR$ to a 
meromorphic family of operators $L^2(\mathbb{R}^d)\to H^2_{\rm loc}(\mathbb{R}^d)$ for $z\in \CC$. Moreover, for $z\in\CC$ that is not a pole of $\resolve_\chi(z)$ and any $f\in L^2(\mathbb{R}^d)$, the image $\resolve_\chi(z)f$ satisfies the Sommerfeld radiation condition \eqref{eq:src}.

(ii) If $\chi \equiv 1$ on  $\supp(1-n)\cup\supp(\psi)$, then the
poles of $\resolve_\chi(z)$ do not depend on the particular choice of $\chi$. Furthermore, 
with $\mathcal{P}$ denoting the poles of $\resolve_\chi(z)$ for any such
choice of $\chi$, 
then for any $\widetilde\chi\in C^\infty_{\rm
  comp}(\mathbb{R}^d)$ the poles of $\resolve_{\widetilde{\chi}}(z)$ are contained in $\mathcal{P}$.

(iii) For any $\chi_1,\chi_2\in C^\infty_{\rm comp}(\mathbb{R}^d)$ and
any $z\in\CC\backslash\mathcal{P}$, $\resolve_{\chi_1}(z)\chi_2 = \resolve_{\chi_2}(z)\chi_1$.

\end{lemma}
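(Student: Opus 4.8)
The plan is to reduce the whole problem, via the free outgoing resolvent, to the invertibility of a compactly supported holomorphic family of the form $I+(\text{compact})$, and then to invoke analytic Fredholm theory. Fix $\chi_0 \in C^\infty_{\rm comp}(\RR^d)$ with $\chi_0 \equiv 1$ on $\supp(1-n)\cup\supp\psi$, write $R_0 := (P-i0)^{-1}$ (this exists by the $z=0$ case of Lemma~\ref{lem:allgood}), and consider $A(z) := z\,\psi R_0 \chi_0$, a bounded operator on $L^2(\RR^d)$. Since $\chi_0$ and $\psi$ are compactly supported, $A(z)$ factors through $H^2$ of a bounded set and so is compact by Rellich; moreover $z\mapsto A(z)$ is entire (indeed linear) with $A(0)=0$. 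Analytic Fredholm theory (Section~\ref{app:B}) then gives that $z\mapsto (I-A(z))^{-1}$ is a meromorphic family of operators on $\CC$ with finite-rank principal parts; write $\mathcal{P}$ for its set of poles, and note $0\notin\mathcal{P}$. The candidate meromorphic continuation is
\[
\resolve_\chi(z) = R_0\chi + z R_0 (I-A(z))^{-1}\psi R_0\chi, \qquad (\ast)
\]
valid wherever $I-A(z)$ is invertible.

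To justify $(\ast)$ and prove (i) and (iii), I would first show that $I-A(z)$ is invertible for every real $z$: by the Fredholm alternative it suffices to check injectivity, and if $w = A(z)w$ then $w$ is a multiple of $\psi$, hence supported in $\supp\psi$, so $\chi_0 w = w$ and $w = z\psi R_0 w$; then $R_0 w$ is an outgoing solution of $(P-z\psi)(R_0 w)=0$, hence vanishes by the uniqueness in Lemma~\ref{lem:allgood}, so $w = z\psi R_0 w = 0$. For such $z$ one then checks directly that, given $f\in L^2$, the function $u$ defined by the right-hand side of $(\ast)$ applied to $f$ satisfies $(P-z\psi)u = \chi f$ — a short computation using $P R_0 = \mathrm{id}$ on compactly supported functions together with $\chi_0 w = w$ for $w := (I-A(z))^{-1}\psi R_0\chi f$ (which is supported in $\supp\psi$) — and is outgoing, being a sum of $R_0$ applied to compactly supported functions; by the uniqueness part of Lemma~\ref{lem:allgood}, $u = \resolve_\chi(z)f$. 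Thus $(\ast)$ holds for real $z$, and its right-hand side furnishes the meromorphic continuation asserted in (i), with the radiation condition inherited from $R_0$ and with all poles lying in $\mathcal{P}$. Part (iii) is then immediate, since composing the right-hand side of $(\ast)$ for $\chi_1$ with $\chi_2$ yields $R_0\chi_1\chi_2 + zR_0(I-A(z))^{-1}\psi R_0\chi_1\chi_2$, which is symmetric in $\chi_1\leftrightarrow\chi_2$ because multiplication operators commute.

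For (ii), the inclusion ``poles of $\resolve_{\widetilde\chi}(z)\subseteq\mathcal{P}$'' for arbitrary $\widetilde\chi\in C^\infty_{\rm comp}(\RR^d)$ is already contained in $(\ast)$. For $\chi\equiv 1$ on $\supp(1-n)\cup\supp\psi$ I would use the freedom to take $\chi_0 := \chi$ in the construction and show that no pole of $(I-A(z))^{-1}$ is cancelled in $(\ast)$: from $(I-A(z))^{-1} = I + (I-A(z))^{-1}A(z) = I + (I-A(z))^{-1}z\psi R_0\chi$ one sees that if $(I-A(z))^{-1}g$ were holomorphic at some $z_0\in\mathcal{P}$ for every $g$ in the range of $\psi R_0\chi$, then $(I-A(z))^{-1}$ itself would be holomorphic at $z_0$, a contradiction; since $R_0$ is injective (as $P R_0 = \mathrm{id}$) and $z_0\ne 0$, the second term of $(\ast)$ therefore has a genuine pole at $z_0$, so the poles of $\resolve_\chi(z)$ are exactly the poles of $(I - z\psi R_0\chi)^{-1}$. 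Finally this last set is independent of the choice of such $\chi$, because for every $\chi\equiv 1$ on $\supp\psi$ one has $\ker(I - z_0\psi R_0\chi) = \{\,w : w = z_0\psi R_0 w\,\}$ — the support condition forces $\chi w = w$ — which is intrinsic; hence all the $\resolve_\chi(z)$ with $\chi\equiv 1$ on $\supp(1-n)\cup\supp\psi$ have the same poles, namely $\mathcal{P}$, and (ii) follows.

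The soft inputs — compactness of $A(z)$, the analytic Fredholm theorem, and the limiting-absorption/uniqueness statements — are standard and can be quoted from Section~\ref{app:B} and Lemma~\ref{lem:allgood}. I expect the only genuinely delicate point to be the cutoff bookkeeping in part (ii): recognising that $\mathcal{P}$ is an intrinsic set, and the device of choosing the auxiliary cutoff $\chi_0$ equal to the given $\chi$, so that the Neumann-type identity $(I-A)^{-1} = I + (I-A)^{-1}A$ directly rules out pole cancellation.
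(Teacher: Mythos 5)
Your proposal is correct, but it takes a genuinely different route from the paper. The paper perturbs off the \emph{free} resolvent $(P_0-i0)^{-1}$ with $P_0=-h^2\Delta-I$, treating all of $n-1+z\psi$ as the perturbation: it inserts cutoffs via the standard identity \eqref{factorization}, applies analytic Fredholm theory to $I-K_\chi(z)$ with $K_\chi(z)=(n-1+z\psi)(P_0-i0)^{-1}\chi$, and gets invertibility at the base point $z=0$ from Rellich's uniqueness theorem applied to $(-h^2\Delta-n)v=0$; accordingly its intrinsic pole characterisation is $u=(n-1+z\psi)(P_0-i0)^{-1}u$, which is why the hypothesis $\chi\equiv 1$ on $\supp(1-n)\cup\supp\psi$ appears. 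You instead perturb off the full unperturbed outgoing resolvent $R_0=(P-i0)^{-1}$ supplied by Lemma \ref{lem:allgood}, so only $z\psi$ is the perturbation: base-point invertibility is free since $A(0)=0$, uniqueness for real $z$ is used only to identify your formula $(\ast)$ with the true outgoing resolvent (and to rule out real poles), and your intrinsic characterisation $w=z_0\psi R_0 w$ involves only $\supp\psi$ — in fact this gives a marginally sharper form of (ii), with pole-independence already for any $\chi\equiv 1$ on $\supp\psi$. Your no-cancellation argument (via $(I-A)^{-1}=I+(I-A)^{-1}A$, injectivity of $R_0$ on compactly supported data, and $z_0\neq 0$) is spelled out more explicitly than in the paper, and your proof of (iii) is by direct symmetry of the formula rather than by analytic continuation from real $z$; both are fine. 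The trade-offs: the paper's factorization through $(P_0-i0)^{-1}$ stays closer to the standard potential-scattering framework and is reused later (e.g.\ in establishing outgoingness of defect measures via \eqref{factorization}), while your version is more economical but leans on $R_0$ and its mapping/uniqueness properties as a black box. One presentational point to tighten: to read off meromorphy of $(\ast)$ as a family $L^2\to H^2_{\rm loc}$, note that $(I-A(z))^{-1}\psi R_0\chi$ has range in functions supported in $\supp\psi$, so the outer $R_0$ may be replaced by the fixed bounded operator $R_0\chi_0:L^2\to H^2_{\rm loc}$; you implicitly use this, and it is worth saying.
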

Consequently, for $z\in\CC\backslash\mathcal{P}$, we can define the operator
\[(P-z\psi-i0)^{-1}: L^2_{\rm comp}(\mathbb{R}^d)\to H^2_{\rm loc}(\mathbb{R}^d)\]
by
\begin{equation}
\label{eq:resolvent-def}
(P-z\psi-i0)^{-1}f := \resolve_{\chi}(z)f
\end{equation}
for $f\in L^2_{\rm comp}(\mathbb{R}^d)$, where $\chi\in C^\infty_{\rm comp}(\mathbb{R}^d)$ is any function that is identically equal to $1$ on $\supp(1-n)\cup\supp(\psi)\cup\supp(f)$. This definition agrees with the definition in Lemma \ref{lem:allgood} for $z\in\RR$, and moreover part (iii) of Lemma \ref{lem:meromorphic} shows the definition \eqref{eq:resolvent-def} does not depend on the choice of $\chi$.

With $(P-z\psi-i0)^{-1}$  defined as in \eqref{eq:resolvent-def}, we check that it gives the outgoing solution to $(P-z\psi)u=f$:

\begin{lemma}\label{lem:joey}
Let $z\in\CC\backslash\mathcal{P}$ and $f\in L^2_{\rm comp}(\mathbb{R}^d)$, and let $u=(P-z\psi-i0)^{-1}f$. Then ${(P-z\psi)u} = f$, and $u$ satisfies the Sommerfeld radiation condition \eqref{eq:src}. 
\end{lemma}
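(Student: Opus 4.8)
The plan is to reduce both claims to results already in hand, namely Lemma~\ref{lem:allgood}, Lemma~\ref{lem:meromorphic}, and the identity theorem for holomorphic Banach-space-valued functions. For the radiation condition there is essentially nothing to do: choosing $\chi\in C^\infty_{\rm comp}(\RR^d)$ with $\chi\equiv 1$ on $\supp(1-n)\cup\supp(\psi)\cup\supp(f)$, the definition \eqref{eq:resolvent-def} gives $u=\resolve_\chi(z)f$, and part~(i) of Lemma~\ref{lem:meromorphic} says exactly that $\resolve_\chi(z)f$ satisfies \eqref{eq:src} for every $z\in\CC\setminus\mathcal{P}$ and every $f\in L^2(\RR^d)\supset L^2_{\rm comp}(\RR^d)$. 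So the real content is the PDE identity $(P-z\psi)u=f$.

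For that identity, the first observation is that it holds for every real $z$: by Lemma~\ref{lem:allgood}, for $z\in\RR$ the operator $(P-z\psi-i0)^{-1}$ is the limiting-absorption operator, which produces the outgoing solution of $(P-z\psi)u=f$; in particular $\RR\cap\mathcal{P}=\emptyset$. I would then promote this to all of $\CC\setminus\mathcal{P}$ by analytic continuation. Fix cutoffs $\varphi,\tilde\varphi\in C^\infty_{\rm comp}(\RR^d)$ with $\tilde\varphi\equiv 1$ on a neighbourhood of $\supp\varphi$. Since $P$ is a differential operator, locality gives
\[
\varphi\,(P-z\psi)\,\resolve_\chi(z)f \;=\; \varphi\,(P-z\psi)\big(\tilde\varphi\,\resolve_\chi(z)f\big),
\]
and by Lemma~\ref{lem:meromorphic}(i) the map $z\mapsto\tilde\varphi\,\resolve_\chi(z)f$ is holomorphic from $\CC\setminus\mathcal{P}$ into $H^2(\RR^d)$; as $P-z\psi$ is affine in $z$ and bounded $H^2(\RR^d)\to L^2(\RR^d)$, the right-hand side is a holomorphic $L^2(\RR^d)$-valued function on the connected open set $\CC\setminus\mathcal{P}$ (connected since $\mathcal{P}$ is discrete). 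It equals the constant $\varphi f$ on $\RR$, hence equals $\varphi f$ everywhere on $\CC\setminus\mathcal{P}$ by the identity theorem; letting $\varphi$ vary, $(P-z\psi)u=f$ in $L^2_{\rm loc}(\RR^d)$, i.e.\ as distributions.

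The only place needing a little care --- and the closest thing to an obstacle --- is the bookkeeping that legitimizes the phrase ``$z\mapsto(P-z\psi)\resolve_\chi(z)f$ is holomorphic'': $\resolve_\chi(z)f$ lives only in $H^2_{\rm loc}$, so one must localize with cutoffs before invoking a Banach-space-valued identity theorem, and one must use the locality of the differential operator $P$ to see that the localized expression still only involves the genuinely $H^2$-valued quantity $\tilde\varphi\,\resolve_\chi(z)f$. This is routine, and once it is set up the lemma follows immediately.
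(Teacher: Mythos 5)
Your proposal is correct and follows essentially the same route as the paper: the radiation condition is read off from the definition \eqref{eq:resolvent-def} together with Lemma \ref{lem:meromorphic}(i), and the identity $(P-z\psi)u=f$ is obtained from the real-$z$ case (Lemma \ref{lem:allgood}) and analytic continuation in $z$. The only difference is that you spell out the localization with cutoffs needed to apply the vector-valued identity theorem, which the paper leaves implicit.
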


\begin{proof}[Proof of Lemma \ref{lem:meromorphic}]
(i) We use perturbation arguments, since our operator $P$ is a
relatively compact
perturbation of the free Helmholtz operator $P_0 = -h^2\Delta-I$.
Since 
\beqs
(P-z\psi) (P_0-i0)^{-1} = I - (n-1+z\psi)(P_0-i0)^{-1},
\eeqs
we have 
\beq\label{eq:Sat2}
(P-z\psi-i0)^{-1}= (P_0-i0)^{-1}\big(I - (n-1+z\psi) (P_0-i0)^{-1}\big)^{-1}.
\eeq
Given $\chi  \in C^\infty_{\rm comp}(\mathbb{R}^d)$ with $\chi \equiv 1$ on  $\supp(1-n)\cup\supp(\psi)$, a standard argument (see \cite[Equation 3.2.2 and proof of Theorem 2.2]{DyZw:19}\footnote{
Note that \cite[Equation 3.2.2]{DyZw:19},  although appearing in a section on
odd-dimensional scattering, is valid in all dimensions.} or \cite[Page 6739]{GaMaSp:21}),
then allows one to insert factors of $\chi$ into \eqref{eq:Sat2}; indeed, 
 for $z \in \RR$,
\begin{equation}\label{factorization}
 (P-z\psi-i0)^{-1}\chi =(P_0-i0)^{-1}\chi ( I-(n-1+z \psi) (P_0-i0)^{-1} \chi )^{-1}.
\end{equation}
Let
\beqs
 K_\chi(z):=(n-1+z \psi) (P_0-i0)^{-1} \chi
\eeqs
so that 
$$ \resolve_\chi(z) = (P-z\psi-i0)^{-1}\chi =(P_0-i0)^{-1}\chi ( I-K_\chi(z) )^{-1}.$$
Since $ (P_0-i0)^{-1}: L^2_{\rm comp}\to H^2_{\rm loc}$ and $\supp (n-1 +z\psi)$ is compact,
the operator family $K_\chi(z)$ 
  is
a family of compact operators, holomorphic in $z$.
If we can show that $I-K_\chi(z)$ is invertible for some $z\in \mathbb{C}$, then the analytic Fredholm theorem (see,
Theorem \ref{thm:analytic_Fredholm} below)
  implies that $(I-K_\chi(z))^{-1}$ is meromorphic in $z$ as a family of bounded operators $L^2(\mathbb{R}^d)\to L^2(\mathbb{R}^d)$. Thus 
\beq\label{eq:Sat5}
\resolve_\chi(z) =(P_0-i0)^{-1}\chi (I-K_\chi(z))^{-1}
\eeq
is a meromorphic family of operators $L^2(\mathbb{R}^d)\to H^2_{\rm loc}(\mathbb{R}^d)$. Moreover, for any $f\in L^2(\mathbb{R}^d)$ and any $z$ that is not a pole of $\resolve_\chi$, we have
\[\resolve_\chi(z)f = (P_0-i0)^{-1}g,\quad g = \chi(I-K_\chi(z))^{-1}f\in L^2_{\rm comp}(\mathbb{R}^d),\]
from which we have that $\resolve_\chi(z)$ satisfies the Sommerfeld radiation conditions \eqref{eq:src} due to the mapping properties of $(P_0-i0)^{-1}$.

We now prove that $I-K_\chi(z)$ is invertible for $z=0$. Since 
\beqs
K_\chi(z) = (n-1+z\psi)(P_0-i0)^{-1}\chi,
\eeqs
it follows that $u\in\ker(I-K_\chi(z))$ if and only if
\beq\label{eq:Sat3}
u = (n-1+z\psi)(P_0-i0)^{-1}\chi u.
\eeq
For such $u$, $\supp(u)\subseteq\supp(n-1+z\psi)\subseteq\supp(1-n)\cup\supp(\psi)$, and since $\chi \equiv 1$ on $\supp(1-n)\cup\supp(\psi)$, we have $\chi u=u$.
Thus, for $u\in\ker(I-K_\chi(0))$, we have 
\beqs
u = (n-1) (P_0-i0)^{-1}u. 
\eeqs
Let $v:= (P_0-i0)^{-1}u$ so that $u=(n-1)v$ and 
\beqs
P_0 v = u = (n-1)v;  \quad \text{ i.e., } (-h^2 \Delta - n)v=0.
\eeqs
By its definition, $v$ is outgoing, and thus and $v\equiv 0$ by Rellich's uniqueness theorem (see, e.g., \cite[Theorem 3.33]{DyZw:19}). 
Therefore $u = (n-1)v \equiv 0$ as well.

We have therefore proved that 
$\resolve_\chi(z) = (P-z\psi-i0)^{-1}\chi$
extends meromorphically from $\RR$ to $\CC$ when  $\chi \equiv 1$ on  $\supp(1-n)\cup\supp(\psi)$. The fact that $\resolve_\chi(z) = (P-z\psi-i0)^{-1}\chi$ extends meromorphically for general $\chi\in C_{\rm comp}^\infty(\mathbb{R}^d)$ follows by 
noting for $z\in\RR$ that $\resolve_\chi(z) = \resolve_{\widetilde{\chi}}(z)\chi$
where $\widetilde\chi\in C_{\rm comp}^\infty(\mathbb{R}^d)$ is identically equal to $1$ on $\supp(\chi)\cup\supp(\psi)\cup\supp(1-n)$
and applying the meromorphic extension of $\resolve_{\widetilde\chi}$ obtained above.

(ii) By \eqref{eq:Sat5} and \eqref{eq:Sat3}, 
if $\chi \equiv 1$ on $\supp(1-n)\cup\supp(\psi)$ then 
$z$ is a pole of $\resolve_\chi(z)$ if and only if there
exists a nonzero $u\in L^2(\mathbb{R}^d)$, with
$\supp(u)\subseteq\supp(1-n)\cup\supp(\psi)$, satisfying
\[u = (n-1+z\psi)(P_0-i0)^{-1}u.\]
Since this condition is independent of $\chi$,  the poles of
$\resolve_\chi(z)$ do not depend on the choice of $\chi$,
as long as it equals $1$ on $\supp(1-n)\cup\supp(\psi)$.

For general $\widetilde\chi\in C^\infty_{\rm comp}(\mathbb{R}^d)$, $\resolve_{\widetilde{\chi}}(z) = \resolve_{\chi}(z)\widetilde\chi$ where $\chi\in C^\infty_{\rm comp}(\mathbb{R}^d)$ is identically equal to $1$ on $\supp(1-n)\cup\supp(\psi)\cup\supp(\widetilde\chi)$, from which we see that all poles of $\resolve_{\widetilde\chi}$ are contained in $\mathcal{P}$.

(iii)
The relation $\resolve_{\chi_1}(z)\chi_2 = \resolve_{\chi_2}(z)\chi_1$ holds when $z\in\mathbb{R}$, and hence by analytic continuation it continues to hold for $z\in\CC\backslash\mathcal{P}$ as well.
\end{proof}

\begin{proof}[Proof of Lemma \ref{lem:joey}]
By the definition of $(P-z\psi-i0)^{-1}f$ in \eqref{eq:resolvent-def}, 
${(P-z\psi-i0)^{-1}f}$ $= \resolve_\chi(z)f$ where $\chi$ is identically equal to $1$ on $\supp(1-n)\cup\supp(\psi)\cup\supp(f)$, from which we obtain the Sommerfeld radiation condition by the mapping properties of $\resolve_\chi(z)$ proved in Lemma \ref{lem:meromorphic}, Part (a). Moreover, the equation
\[(P-z\psi)\resolve_\chi(z)f = (P-z\psi)(P-z\psi-i0)^{-1}\chi f = \chi f = f\]
holds when $z\in\RR$, and hence by analytic continuation it continues to hold for $z\in\CC\backslash\mathcal{P}$, thus showing that $(P-z\psi)(P-z\psi-i0)^{-1}f = f$ continues to hold for $z\in\CC\backslash\mathcal{P}$ as well.
\end{proof}

We now define a special case of \emph{complex scaling} (for the general case, see, e.g., \cite[\S4.5.1]{DyZw:19}). 
Recall that $R_0>0$ is such that $\supp(1-n)\cup\supp(\psi)\subset B(0,R_0)$. Given $R_1 >R_0$ and $R_2>2R_1$, 
let $g\in C^\infty(\Rea;\Rea)$ be such that 
\beq\label{eq:g}
g(t) = 0 \,\,\text{  for  }\,\, t\leq R_1, \quad g(t) = t^2/2 \,\,\text{  for  } \,\,t\geq R_2, \quad\text{  and  }\,\, g''(t)\geq 0\,\, \text{ for all }t.
\eeq
Then let
\beq\label{eq:F}
F_\theta (x) = (\tan \theta) g(|x|) \quad \text{ for }\quad 0<\theta<\pi/2,
\eeq
and define 
\beq\label{eq:DeltaTheta}
-\Delta_\theta u := \big( (I+i F''_\theta(x))^{-1}\partial_x \big) \cdot  \big( (I+i F''_\theta(x))^{-1}\partial_x u\big),
\eeq
where $F''_\theta$ denotes the Hessian of $F_\theta$; see \cite[Equation 4.5.14]{DyZw:19}. Observe that  $\Lap_\theta = \Lap$ on $B(0,R_1)$ and $\Lap_\theta = (1+i\tan\theta)^{-2}\Lap$ outside $B(0,R_2)$. 
Finally, let 
\beq\label{eq:scaled}
P_\theta=-h^2\Lap_\theta -n.
\eeq

\begin{lemma}\label{lem:Fredholm}
$P_\theta -z\psi$ is Fredholm of index zero and the poles of $z\mapsto(P_\theta-z\psi)^{-1}$ are discrete.
\end{lemma}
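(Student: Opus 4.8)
The plan is to reduce the statement to the analytic Fredholm theorem (Theorem~\ref{thm:analytic_Fredholm}), applied to a holomorphic family of operators of the form ``identity plus compact,'' in the same spirit as the proof of Lemma~\ref{lem:meromorphic} but now for the complex-scaled operator. Write $P_{0,\theta}:=-h^2\Delta_\theta-1$ for the \emph{free} complex-scaled operator, acting $H^2(\RR^d)\to L^2(\RR^d)$; this is the natural domain, since $\Delta_\theta$ is second order with smooth coefficients that are constant outside $B(0,R_2)$. First I would record that $P_{0,\theta}$ is invertible: its spectrum is the half-line $(1+i\tan\theta)^{-2}[0,\infty)$---the complex-scaled continuous spectrum of $-h^2\Delta$, with no additional eigenvalues since the free operator has no resonance---translated by $-1$, and this half-line does not contain $0$ because $(1+i\tan\theta)^2\notin[0,\infty)$ (its imaginary part is $2\tan\theta\neq 0$). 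This, together with the fact that complex scaling makes the operator non-characteristic on the real cotangent space at infinity, is entirely standard; see \cite[\S4.5]{DyZw:19}.

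Next I would factor
\[
P_\theta - z\psi \;=\; P_{0,\theta} + (1-n-z\psi) \;=\; P_{0,\theta}\bigl(I + P_{0,\theta}^{-1}(1-n-z\psi)\bigr),
\]
viewed as a map $H^2(\RR^d)\to L^2(\RR^d)$. The multiplication operator $1-n-z\psi$ is bounded with support in $\overline{B(0,R_0)}$, so $u\mapsto (1-n-z\psi)u$ is compact from $H^2(\RR^d)$ to $L^2(\RR^d)$ by Rellich's theorem on $B(0,R_0)$; composing with the bounded map $P_{0,\theta}^{-1}\colon L^2(\RR^d)\to H^2(\RR^d)$ shows that $\mathcal K(z):=P_{0,\theta}^{-1}(1-n-z\psi)$ is compact on $H^2(\RR^d)$ and depends affinely, hence holomorphically, on $z\in\CC$. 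Therefore $z\mapsto I+\mathcal K(z)$ is a holomorphic family of Fredholm operators of index $0$, and composing with the fixed invertible operator $P_{0,\theta}$ shows that $P_\theta-z\psi$ is Fredholm of index $0$ for every $z\in\CC$, which is the first assertion.

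For the statement about poles, I would apply the analytic Fredholm theorem to the family $z\mapsto I+\mathcal K(z)$ on the connected set $\CC$; it then remains only to exhibit one value of $z$ at which $I+\mathcal K(z)$ is invertible. I would take $z=0$, where $P_{0,\theta}(I+\mathcal K(0))=P_\theta=-h^2\Delta_\theta-n$, so that $I+\mathcal K(0)$ is invertible if and only if $P_\theta$ is. Now $P_\theta$ is Fredholm of index $0$ (by the previous paragraph with $z=0$) and it is injective: if $P_\theta u=0$ with $u\in H^2(\RR^d)$, then outside $B(0,R_2)$ the function $u$ solves $-h^2\Delta u=(1+i\tan\theta)^2 u$, an equation whose spectral parameter lies off $[0,\infty)$, forcing $u$ to decay rapidly; Rellich's uniqueness theorem together with unique continuation (the same ``no real resonance'' input used in the proof of Lemma~\ref{lem:meromorphic}) then gives $u\equiv 0$. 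Hence $P_\theta$, being Fredholm of index $0$ and injective, is invertible, so $I+\mathcal K(0)$ is invertible, and the analytic Fredholm theorem yields that $(I+\mathcal K(z))^{-1}$, and therefore $(P_\theta-z\psi)^{-1}=(I+\mathcal K(z))^{-1}P_{0,\theta}^{-1}$, is meromorphic on $\CC$ with a discrete set of poles.

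I do not expect a serious obstacle here: the only real content is importing the standard complex-scaling facts of the first paragraph---the function-space framework for $P_\theta$ and the invertibility of the free scaled operator $P_{0,\theta}$ (non-characteristicity at infinity, location of the essential spectrum, absence of an eigenvalue at energy $1$)---for which I would simply cite \cite[\S4.5]{DyZw:19}. The only point beyond that reference is carrying the extra compactly-supported multiplicative term $z\psi$ alongside the potential $1-n$, and this is benign precisely because $z\psi$ enters as a holomorphic, relatively compact perturbation.
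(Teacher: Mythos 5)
Your overall strategy---writing $P_\theta-z\psi$ as a fixed invertible operator composed with a holomorphic family $I+\mathcal{K}(z)$, $\mathcal{K}(z)$ compact, and then invoking the analytic Fredholm theorem (Theorem \ref{thm:analytic_Fredholm}) anchored at $z=0$---is the same as the paper's; the difference is that you pivot around the free scaled operator $P_{0,\theta}$, whereas the paper pivots around $P_\theta$ itself, taking its Fredholmness and invertibility from the black-box results \cite[Lemma 4.36, Theorems 4.18 and 4.38]{DyZw:19} together with unique continuation. Your choice of pivot is legitimate, and the compactness and holomorphy of $\mathcal{K}(z)=P_{0,\theta}^{-1}(1-n-z\psi)$ are fine (as is the citable invertibility of $P_{0,\theta}$ at energy $1$). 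But the change of pivot does not remove the essential input: to anchor the analytic Fredholm theorem at $z=0$ you still need $P_\theta$ to be invertible, i.e.\ injective, and this is exactly where your argument has a genuine gap.

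The flawed step is the claim that an $H^2$ solution of $P_\theta u=0$, which decays outside $B(0,R_2)$, must vanish ``by Rellich's uniqueness theorem together with unique continuation.'' Outside $B(0,R_2)$ your $u$ satisfies $-h^2\Delta u=(1+i\tan\theta)^2u$, an equation with a \emph{non-real} spectral parameter, and Rellich's theorem does not apply to such equations: for complex energy there are plenty of nonzero, exponentially decaying exterior solutions (outgoing Hankel-function modes with complex wavenumber), so decay yields no vanishing in the exterior region and the unique-continuation step inward never gets started. The fact you need---that $1$ is not an $L^2$-eigenvalue of $P_\theta$---is, via the complex-scaling dictionary, equivalent to the absence of a real resonance of $P$ at energy $1$; establishing it requires first converting the scaled eigenfunction into an \emph{outgoing solution of the unscaled equation} $(-h^2\Delta-n)v=0$ (this deformation/identification is the content of \cite[Theorems 4.18 and 4.38]{DyZw:19}, and cannot be replaced by applying Rellich directly to $u$ in the scaled region), and only then applying Rellich's theorem plus unique continuation, which for the merely $L^\infty$ coefficient $n$ allowed in this lemma needs \cite{JeKe:85} rather than the smooth-coefficient statement implicit in your appeal to Lemma \ref{lem:meromorphic}. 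If you replace your injectivity paragraph by these citations (which is precisely what the paper does), the rest of your argument goes through.
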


\begin{proof}
\cite[Lemma 4.36]{DyZw:19} implies that $P_\theta$ is Fredholm of
index zero, mapping $H^2_h\to L^2 $ (where the semiclassical Sobolev space $H^2_h$ is defined by \eqref{eq:Hsk} below), whenever $P$ is a semiclassical
black-box operator in the sense of \cite[Definition 4.1]{DyZw:19};
this is the case for $P$ defined by \eqref{eq:P} (in
particular, $P$ equals $-h^2 \Delta-I$ outside a compact set).
Moreover, unique continuation 
(which holds when $n\in L^\infty$ by \cite{JeKe:85}) and  \cite[Theorems~4.18 and
4.38]{DyZw:19} show
$P_\theta$ has bounded inverse as a map from $H^2_h$ to $L^2$.
Consequently, since multiplication by $\psi$ is a compact
operator
  $H^2_h$ to $L^2$, the Fredholm alternative implies that the factorization
\begin{equation}\label{Pthetafactor}
(P_\theta-z\psi)=  P_\theta(I+ (P_\theta)^{-1} z\psi),
\end{equation}
exhibits $(P_\theta-z\psi)$ as an invertible operator $H^2_h\to L^2$ right-composed with
a holomorphic family of operators on $H^2_h$ of index zero; the analytic Fredholm
theorem (see Theorem \ref{thm:analytic_Fredholm} below) then yields discreteness of the poles of the inverse.
\end{proof}

\begin{lemma}[Agreement of the resolvents away from scaling]\label{lem:agreement}
If $\chi \in C^{\infty}_{\rm comp}(B(0, R_1))$, then
\beq\label{eq:agreement}
\chi(P_\theta-z\psi)^{-1}\chi = \chi(P-z\psi-i0)^{-1}\chi
\eeq
whenever $z$ is not a pole of $(P_\theta-z\psi)^{-1}$.
\end{lemma}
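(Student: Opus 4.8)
The plan is to reduce \eqref{eq:agreement} to the classical ``agreement outside the scaling region'' property of complex scaling, established at one convenient value of $z$, and then to propagate it to all admissible $z$ by meromorphy in $z$.

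First I would note that both sides of \eqref{eq:agreement}, viewed as operators $L^2\to L^2$, are meromorphic families in $z\in\mathbb{C}$: the right-hand side by Lemma \ref{lem:meromorphic} (applied with the cutoff $\chi$, using $\supp\chi\subset B(0,R_1)$), and the left-hand side by Lemma \ref{lem:Fredholm}. The poles of each side form a discrete set, so the set $D\subset\mathbb{C}$ of points that are poles of neither side is open and connected, and it contains all but a discrete subset of $\mathbb{R}$. By analytic continuation it therefore suffices to prove \eqref{eq:agreement} for $z\in\mathbb{R}$; once that is done, analytic continuation yields \eqref{eq:agreement} throughout $D$, and since neither side can stay finite where the other has a pole, the two sides share the same poles, so the hypothesis of the lemma is precisely $z\in D$.

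Now fix $z\in\mathbb{R}$. Since $\supp\chi\subset B(0,R_1)$ it is enough to show $\chi(P_\theta-z\psi)^{-1}f=\chi(P-z\psi-i0)^{-1}f$ for every $f$ supported in $B(0,R_1)$. For $s>0$, self-adjointness of $P-z\psi$ makes $(P-z\psi-is)^{-1}$ a bounded operator on $L^2$; put $u_s:=(P-z\psi-is)^{-1}f$. Outside $B(0,R_0)$ one has $(-h^2\Delta-1-is)u_s=0$ with $u_s\in L^2$, so by the Hankel-function expansion $u_s$ decays exponentially and continues holomorphically in $r=|x|$ to a conic neighbourhood of the scaling contour; restricting that continuation to the contour defining $\Delta_\theta$ gives a function $\widetilde u_s\in H^2_h$ which equals $u_s$ on $B(0,R_1)$ (where $\Delta_\theta=\Delta$ and $f,\psi$ are untouched by the scaling) and satisfies $(P_\theta-z\psi-is)\widetilde u_s=f$. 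Since $P_\theta-z\psi-is$ is invertible for $s>0$ (Fredholm of index zero as in Lemma \ref{lem:Fredholm}, and injective because a kernel element pulls back to an exponentially decaying --- hence $L^2$ --- solution of $(P-z\psi)v=isv$, an eigenfunction of the self-adjoint operator $P-z\psi$ with non-real eigenvalue, which cannot exist), we get $\widetilde u_s=(P_\theta-z\psi-is)^{-1}f$, and hence
\[\chi(P_\theta-z\psi-is)^{-1}\chi=\chi(P-z\psi-is)^{-1}\chi\qquad\text{for }s>0,\ z\in\mathbb{R}.\]
This is exactly the classical complex-scaling agreement statement; cf.\ \cite[Theorem 4.37]{DyZw:19} and its proof.

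Finally I would let $s\downarrow0$. By the limiting absorption principle (Lemma \ref{lem:allgood}) the right-hand side converges to $\chi(P-z\psi-i0)^{-1}\chi$ and in particular stays bounded, so the left-hand side --- holomorphic in $s$ for $s>0$ --- remains bounded as $s\downarrow0$ and therefore extends holomorphically across $s=0$; this simultaneously shows that $z$ is not a pole of $(P_\theta-z\psi)^{-1}$ and gives \eqref{eq:agreement} for $z\in\mathbb{R}$. Together with the first paragraph this proves \eqref{eq:agreement} for every $z$ that is not a pole of $(P_\theta-z\psi)^{-1}$. I expect the real obstacle to be the middle step: showing that the complex scaling of the decaying resolvent $u_s$ genuinely produces the inverse of the scaled operator --- i.e. that $\widetilde u_s\in H^2_h$ and solves the scaled equation. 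This is the one place where the contour-deformation/analytic-continuation content of complex scaling is actually used; the Fredholm bookkeeping, the meromorphy in $z$, and the $s\downarrow0$ limit are comparatively routine.
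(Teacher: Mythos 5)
Your proposal is correct in outline and shares the paper's overall skeleton (establish \eqref{eq:agreement} for real $z$, then extend by meromorphy in $z$, using Lemmas \ref{lem:meromorphic} and \ref{lem:Fredholm}), but it handles the real-$z$ step by a genuinely different route. The paper's key observation is that for $z\in\RR$ the operator $P-z\psi$ is itself a semiclassical black-box operator (since $\supp\psi\subset B(0,R_0)$ and $z\psi$ is real and bounded), so the agreement of scaled and unscaled cut-off resolvents at real $z$ is exactly \cite[Theorem 4.37]{DyZw:19} applied with $\lambda=1$; no regularisation in the spectral parameter is needed, and the general $\chi\in C^\infty_{\rm comp}(B(0,R_1))$ is then recovered from the case $\chi\equiv 1$ near $B(0,R_0)$ by inserting a larger cutoff $\widetilde\chi$. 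You instead re-prove the content of that theorem from scratch: regularise with $-is$, $s>0$, use the Hankel/spherical-harmonic expansion and exponential decay to continue $u_s$ analytically onto the scaling contour, identify the restriction with $(P_\theta-z\psi-is)^{-1}f$, and pass to the limit $s\downarrow 0$ via Lemma \ref{lem:allgood}. What your route buys is self-containedness and transparency about where analyticity is used; what it costs is that the middle step (membership of $\widetilde u_s$ in $H^2_h$, the fact that it solves the scaled equation, and injectivity of $P_\theta-z\psi-is$ via ``pulling back'' a kernel element to an $L^2$ eigenfunction of the self-adjoint $P-z\psi$) is precisely the nontrivial content of the cited black-box theorem, and you leave it as a sketch --- as you yourself flag. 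Three small caveats if you were to write it out: (i) $f$ is only supported in $B(0,R_1)$, not $B(0,R_0)$, so the free equation for $u_s$ holds only outside a slightly smaller ball containing $\supp f$ (harmless, since the scaling only begins at $|x|>R_1$); (ii) the holomorphic extension of $\chi(P_\theta-z\psi-is)^{-1}\chi$ across $s=0$ from boundedness along the ray requires knowing beforehand that the family is meromorphic near the spectral parameter $0$ (available by Fredholm theory, since $0$ avoids the essential spectrum of $P_\theta-z\psi$ for $0<\theta<\pi/2$), not boundedness alone; and (iii) boundedness of the \emph{cut-off} operator does not show that $z$ fails to be a pole of the full $(P_\theta-z\psi)^{-1}$, though this claim is not needed for the lemma as stated.
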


\begin{proof}
We first suppose that $\chi$ is identically equal to one near $B(0,R_0)$. 
When $z\in \mathbb{R}$, 
 $\widetilde{P}:= P -z\psi$ is semiclassical black-box operator (in the sense of \cite[Definition 4.1]{DyZw:19}) since $\supp(\psi)\subset B(0,R_0)$.
The agreement \eqref{eq:agreement} for $z\in \mathbb{R}$ then follows from \cite[Theorem 4.37]{DyZw:19} (with $\lambda=1$).
Since both sides of \eqref{eq:agreement} are meromorphic in $z$
(by Lemmas \ref{lem:Fredholm} and \ref{lem:meromorphic}, respectively), \eqref{eq:agreement} holds for 
all $z\in \mathbb{C}$ that are not poles of $(P_\theta-z\psi)^{-1}$
by analytic continuation.

For a general $\chi\in C^{\infty}_{\rm comp}(B(0,R_1))$, there exists $\widetilde\chi\in C^{\infty}_{\rm comp}(B(0,R_1))$ that is identically one on $B(0,R_0)\cup\supp(\chi)$, in which case $\chi = \chi\widetilde\chi$. Then the previous paragraph gives $\widetilde\chi(P_\theta-z\psi)^{-1}\widetilde\chi = \widetilde\chi(P-z\psi-i0)^{-1}\widetilde\chi$, after which multiplying on the left and right by $\chi$ gives the desired agreement.
\end{proof}


\begin{corollary}\label{cor:poles}
Given $\chi \in C^\infty_{\rm comp}(\Rea^d)$, choose $R_1>R_0$ large enough so that $\supp \chi \subset B(0,R_1)$. Let $P_\theta$ be the complex scaled operator \eqref{eq:scaled} with this $R_1$. Then 
the number of poles of $\chi(P-z\psi)^{-1}\chi$ is at most the number of poles of $(P_\theta-z\psi)^{-1}$.
\end{corollary}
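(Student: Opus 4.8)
The plan is to reduce the claim to a direct comparison of poles of two meromorphic families via the agreement result in Lemma~\ref{lem:agreement}. First I would note that, since $\supp\chi\subset B(0,R_1)$, we may pick $\chi$ itself as the cutoff in \eqref{eq:agreement}, so that for every $z$ that is not a pole of $(P_\theta-z\psi)^{-1}$ we have the operator identity
\[
\chi(P-z\psi-i0)^{-1}\chi = \chi(P_\theta-z\psi)^{-1}\chi.
\]
Both sides are meromorphic families of operators on $\CC$: the left-hand side by Lemma~\ref{lem:meromorphic}, the right-hand side because $(P_\theta-z\psi)^{-1}$ is meromorphic (Lemma~\ref{lem:Fredholm}) and left/right multiplication by the bounded operator $\chi$ preserves meromorphy.

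The key point is then elementary complex analysis of operator-valued meromorphic functions: if $A(z)$ is holomorphic at a point $z_0$, then so is $\chi A(z) \chi$, hence the set of poles of $z\mapsto \chi(P_\theta-z\psi)^{-1}\chi$ is contained in the set of poles of $z\mapsto (P_\theta-z\psi)^{-1}$ (some poles may cancel when one multiplies by $\chi$ on both sides, but no new poles can be created). Combining this with the identity above, every pole of $\chi(P-z\psi-i0)^{-1}\chi = \chi(P-z\psi)^{-1}\chi$ that lies away from the poles of $(P_\theta-z\psi)^{-1}$ is in fact a point of holomorphy, so the poles of $\chi(P-z\psi)^{-1}\chi$ form a subset of the poles of $(P_\theta-z\psi)^{-1}$. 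Counting (with or without multiplicity, whichever convention is in force elsewhere in the paper), this gives the asserted inequality on the number of poles.

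One small subtlety to address is that \eqref{eq:agreement} is only asserted at points $z$ that are not poles of $(P_\theta-z\psi)^{-1}$; so a priori one only knows that the two meromorphic families agree on $\CC$ minus the (discrete) pole set of $(P_\theta-z\psi)^{-1}$. But two meromorphic operator families that agree off a discrete set agree as meromorphic families, so their pole sets away from that discrete set coincide exactly, and the argument above goes through without change. I would also remark that $R_1$ can be taken as large as needed (subject only to $R_1>R_0$), since the complex-scaled operator $P_\theta$ and its pole count depend on the choice of $R_1$ and $R_2$; the statement is simply for one such admissible choice adapted to $\supp\chi$.

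I do not expect any serious obstacle here — the content is entirely bookkeeping on top of Lemma~\ref{lem:agreement} and the meromorphy statements already proved. The only thing requiring a moment's care is the observation that post-composition and pre-composition by the fixed bounded operator $\chi$ cannot create poles, which is immediate from the local Laurent-expansion characterization of poles.
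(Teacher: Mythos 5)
Your argument is correct and is essentially the paper's own proof: the paper likewise observes that multiplication by $\chi$ cannot create poles (so poles of $\chi(P_\theta-z\psi)^{-1}\chi$ are poles of $(P_\theta-z\psi)^{-1}$) and then invokes Lemma~\ref{lem:agreement} to transfer this to $\chi(P-z\psi-i0)^{-1}\chi$. Your extra remark about the two families agreeing off the discrete pole set and hence as meromorphic families is a welcome spelling-out of a step the paper leaves implicit.
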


\begin{proof}
If $\chi(P_\theta-z\psi)^{-1}\chi$ has a pole at $z=z_0$ then 
$(P_\theta-z\psi)^{-1}$ has a pole at $z=z_0$; the result then follows from Lemma \ref{lem:agreement}.
\end{proof}

Thus, to count poles of $\chi (P-z\psi-i0)^{-1}
    \chi$ for $\chi \in C^\infty_{\rm comp}(\Rea^d)$, it suffices to count the number of poles of $(P_\theta-z\psi)^{-1}$.

\section{Polynomial bound on the number of poles (proof of Part (a) of Theorems \ref{thm:main} and \ref{thm:mainT}).}\label{sec:poly_bound}

Given $\smallp>0$, let $\Omega \Subset \mathbb{C}$ be such that $\Omega\supset \{z : |z|< \smallp\}.$ 
The bounds in Part (a) of Theorems \ref{thm:main} and \ref{thm:mainT} follow from a bound on the number of poles in $\Omega$.

By Corollary \ref{cor:poles}, it is sufficient to bound the number of poles of $(P_\theta-z\psi)^{-1}$. As noted in \S\ref{sec:ideas}, we follow the steps in the proof of the analogous bound on the number of poles of  $(P- zI)^{-1}$ in \cite[Theorem 7.4]{DyZw:19}.
Let 
$$
\widetilde P_\theta:= P_\theta -iM Q
$$
where
$$
 Q :=  \chi(hD) \chi^2(x) \chi(hD),
 $$
$M>0$ is sufficiently large, and $\chi\in C^\infty_{\rm comp}(\mathbb{R}^d)$ is identically one on $B(0,R_3)$, where $R_3>R_2$ is sufficiently large. Both $M$ and $R_3$ will be specified later. 
Set
\beq\label{eq:W}
W:=\widetilde P_\theta - z\psi,
\eeq

 \begin{lemma}[$W$ is invertible, uniformly for $h$ sufficiently small]\label{lem:W}
Given $h_0>0$ and $\Omega \Subset \mathbb{C}$, if $M$ and $R_3$ are sufficiently large then there exists $C>0$ such that, for 
all $0<h<h_0$ and $z\in \Omega$, $W^{-1}: L^2(\Rea^d) \to H^2_h(\Rea^d)$ exists with 
$\|W^{-1}\|_{L^2\to H^2_h} \leq C$.
\end{lemma}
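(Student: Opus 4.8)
The plan is to show that $W = \widetilde P_\theta - z\psi = P_\theta - iMQ - z\psi$ is invertible by establishing an a priori estimate $\|u\|_{H^2_h} \leq C\|Wu\|_{L^2}$ uniform in $h < h_0$ and $z \in \Omega$, and then combining this with the Fredholm-index-zero property of $P_\theta$ (hence of $W$, since $iMQ + z\psi$ is relatively compact) from Lemma \ref{lem:Fredholm}. The estimate plus index zero gives invertibility with the stated operator-norm bound. So the real content is the a priori estimate, and the mechanism is that $Q = \chi(hD)\chi^2(x)\chi(hD)$ provides an elliptic-type damping term that, for $M$ large, dominates both the contribution of $z\psi$ (bounded since $z$ ranges in a bounded set $\Omega$ and $\psi \in L^\infty_{\rm comp}$) and the region where $P_\theta$ itself fails to be elliptic.

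First I would recall the structure: outside $B(0,R_1)$, $P_\theta$ is the complex-scaled operator, and the key point from the complex-scaling literature (e.g.\ \cite[\S4.5]{DyZw:19}) is that $P_\theta$ is elliptic in the semiclassical sense on the region where scaling is fully active, i.e.\ outside $B(0,R_2)$, because there $-\Delta_\theta = (1+i\tan\theta)^{-2}\Delta$ and the symbol $(1+i\tan\theta)^{-2}|\xi|^2 - n$ stays away from $0$ once $|\xi|$ is large or $|x|$ is large (the rotation into the complex plane removes the characteristic set). In the transition annulus $R_1 \le |x| \le R_2$ one uses the convexity condition $g'' \ge 0$ in \eqref{eq:g} to show that $\Im$ of the relevant symbol has a favorable sign, which is the standard complex-scaling a priori estimate. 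The upshot is an estimate of the form
\[
\|u\|_{H^2_h} \le C\big( \|P_\theta u\|_{L^2} + \|\chi_0 u\|_{L^2} \big)
\]
where $\chi_0 \in C^\infty_{\rm comp}$ is supported in, say, $B(0,R_3)$ and equals $1$ on $B(0,R_2)$ — i.e., $P_\theta$ controls everything modulo the solution on a large compact set. This is exactly the "semiclassically outgoing" / black-box Fredholm estimate underlying \cite[Lemma 4.36 and Theorems 4.18, 4.38]{DyZw:19}.

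Next I would absorb the compact-set term using $-iMQ$. Write $Wu = P_\theta u - iMQu - z\psi u$, so $P_\theta u = Wu + iMQu + z\psi u$; pairing and taking imaginary parts, $M\langle Qu,u\rangle \le C(\|Wu\|_{L^2}\|u\|_{L^2} + \cdots)$, and since $Q$ is (semiclassically) elliptic of order $0$ on $B(0,R_3)$ in frequencies $|\xi|\lesssim 1$ while $P_\theta$-ellipticity handles $|\xi|$ large, one gets $\|\chi_0 u\|_{L^2}^2 \le C M^{-1}\|u\|_{L^2}^2 + C_M\|Wu\|_{L^2}\|u\|_{L^2} + (\text{small})$. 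Feeding this back into the $H^2_h$ estimate and choosing $M$ large (after $R_3$ large) absorbs the $\|u\|_{L^2}$ terms on the left, yielding $\|u\|_{H^2_h} \le C\|Wu\|_{L^2}$ uniformly. The term $z\psi u$ is harmless here: $\psi$ is compactly supported inside $B(0,R_0) \subset B(0,R_1)$ where $\chi \equiv 1$, so it is controlled by $\|\chi_0 u\|_{L^2}$, and $|z|$ is bounded on $\Omega$. Injectivity follows from the estimate, surjectivity from index zero, and the norm bound is read off directly.

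The main obstacle I expect is making the "$Q$ absorbs the compact-set error" step precise with the right quantifier order: one must fix $R_3$ large enough that $P_\theta$-ellipticity plus $Q$-ellipticity cover all of phase space over $B(0,R_2)$ (high frequencies by $P_\theta$, the bounded-frequency elliptic set by $\chi(hD)$), and only then choose $M$ large depending on $R_3$ but not on $h$ or $z$. Care is also needed that $Q$, being only a nonnegative operator rather than strictly elliptic everywhere, still dominates on the support of the residual term — this is where one leans on the commutator/Gårding-type positivity of $\chi(hD)\chi^2\chi(hD)$ together with the fact that the $P_\theta$ estimate already gives control in the elliptic region, so the two together leave no gap. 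This is standard in the proof of \cite[Theorem 7.4]{DyZw:19}, and the only new wrinkle is carrying the extra bounded perturbation $z\psi$ through, which as noted changes nothing since $\psi$ lives well inside the unscaled region.
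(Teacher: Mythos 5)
Your strategy is essentially sound and would yield the lemma, but it is organized quite differently from the paper's proof. The paper never splits the argument into ``estimate for $P_\theta$ modulo a compact-set term, then absorb that term by pairing with $-iMQ$'': instead it exploits that $Q$ is itself a semiclassical pseudodifferential operator with principal symbol $\chi(x)^2\chi(\xi)^2$, so the \emph{full} symbol of $\widetilde P_\theta = P_\theta - iMQ$ is globally elliptic on all of phase space. The verification is exactly your covering argument, but done at the symbol level: if $\Im\sigma_h(\widetilde P_\theta)=-\bigl(2\langle F_\theta''\eta,\eta\rangle+M\chi(x)^2\chi(\xi)^2\bigr)$ and $\Re\sigma_h(\widetilde P_\theta)$ both vanish, then $|\xi|^2\le\tfrac{25}{16}\sup n<R_3^2$, forcing $\chi(\xi)=1$, hence $\chi(x)=0$, hence $|x|>R_3>R_2$ and $F_\theta''\eta=0$ forces $\eta=0$, a contradiction; semiclassical elliptic regularity (Theorem \ref{thm:elliptic}) then gives uniform invertibility of $\widetilde P_\theta$ with no imaginary-part pairing, no G\aa rding-type positivity of $Q$, and no Fredholm-index input. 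The perturbation $z\psi$ is then removed by a Neumann series rather than inside an a priori estimate: since $\supp\psi\subset B(0,R_1)$, the symbol of $\widetilde\chi\widetilde P_\theta^{-1}$ there is $\widetilde\chi/(|\xi|^2-n-iM\chi(\xi)^2)$, of modulus at most $1/M$ once $R_3^2-\sup n\ge M$ (so in the paper $R_3$ is chosen large \emph{after} $M$, the reverse of your quantifier order), giving $\|z\psi\widetilde P_\theta^{-1}\|_{L^2\to L^2}\le\tfrac12$ for $M>2\sup_\Omega|z|\,\|\psi\|_{L^\infty}$ and hence $W=(I-z\psi\widetilde P_\theta^{-1})\widetilde P_\theta$ invertible with the stated bound. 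What your route buys is independence from the symbolic lower bound $\ge M$ at high frequencies (you handle $|\xi|$ large by $M$-independent ellipticity of $P_\theta$, which is why your ordering ``$R_3$ first, then $M$'' is legitimate), and it extends more readily to perturbations that are not pseudodifferential; what it costs is precisely the delicate steps you flag: a sign/sharp-G\aa rding control of $\Im\langle P_\theta u,u\rangle$ (the scaled Laplacian is not self-adjoint, so this needs the negativity of $\Im(I+iF_\theta'')^{-2}$ up to $O(h)$ commutator errors), the circular bookkeeping between $\|P_\theta u\|$, $M\|Qu\|$ and $\|\chi_0 u\|$, and an appeal to index zero for surjectivity --- all of which the paper's purely symbolic argument avoids.
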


\begin{proof}
Step 1:~We claim that if $R_3$ is sufficiently large, then $\widetilde P_\theta:H^2_h\to L^2$ is invertible for all sufficiently small $h$, with $\|\widetilde P_\theta^{-1}\|_{L^2\to H^2_h}$ uniform in $h$. This will follow from semiclassical elliptic regularity, i.e.\ from showing a bound of the form
\[|\sigma_h(\widetilde P_\theta)(x,\theta)|\ge \epsilon(1+|\xi|^2);\]
see Theorem \ref{thm:elliptic} below. 
From \eqref{eq:DeltaTheta}/\cite[Equation 4.5.14]{DyZw:19},  
\[\sigma_h(-\Lap_\theta) = |\eta|^2 - |F''_\theta(x)\eta|^2 - 2i\langle F''_\theta(x)\eta,\eta\rangle,\quad \eta = (1+(F''_\theta(x))^2)^{-1}\xi.\]
Recall from \eqref{eq:g} and \eqref{eq:F} that $F''_\theta(x)$ is always positive semi-definite, $F''_\theta(x) = \tan(\theta)I$ for $|x|\ge R_2$, and we can choose $\theta$ sufficiently small so that $|F''_\theta(x)\eta|\le\frac{1}{2}\eta$ for all $\eta$; note that this implies $|\xi| = |(1+(F''_\theta(x))^2)\eta|\le \frac{5}{4}|\eta|$, i.e.\ $|\eta|\ge\frac{4}{5}|\xi|$. Recall that
\[\widetilde P_\theta := P_\theta - iMQ = -h^2\Lap_\theta - n(x) - iM\chi(hD)\chi(x)^2\chi(hD)\]
so that
\[\sigma_h(\widetilde P_\theta) = |\eta|^2 - |F''_\theta(x)\eta|^2 - n(x) - i\left(2\langle F''_\theta(x)\eta,\eta\rangle + M\chi(x)^2\chi(\xi)^2\right).\]
From this, we easily see that $|\sigma_h(\widetilde P_\theta)|\ge \epsilon(1+|\xi|^2)$ for $\xi$ sufficiently large, so it suffices to show it is uniformly bounded away from $0$ for $\xi$ uniformly bounded. First, we claim $\sigma_h(\widetilde P_\theta)$ never vanishes:~indeed, since $F''_\theta$ is positive semi-definite, 
\[\Im(\sigma_h(\widetilde P_\theta)) = -\left(2\langle F''_\theta(x)\eta,\eta\rangle + M\chi(x)^2\chi(\xi)^2\right)\le 0;\]
moreover, the last inequality holds with equality if and only if
\beq\label{eq:spots1}
F''_\theta(x)\eta = 0\text{ and }\chi(x)\chi(\xi) = 0.
\eeq
If $F''_\theta(x)\eta = 0$, then
\[\Re(\sigma_h(\widetilde P_\theta)) = |\eta|^2-|F''_\theta(x)\eta|^2-n(x) = |\eta|^2-n(x),\]
so $\sigma_h(\widetilde P_\theta)(x,\xi) = 0$ implies that $|\eta|^2= n(x)$, which implies that $|\xi|^2\le \frac{25}{16}n(x)$. As such, if we arrange
\[R_3^2>\frac{25}{16}\sup_{\mathbb{R}^d} n(x),\]
then, since $\chi \equiv 1$ on $B(0,R_3)$, 
$\sigma_h(\widetilde P_\theta)(x,\xi) = 0$ implies that $\chi(\xi) =
1$, which, by the second inequality in \eqref{eq:spots1}, implies that $\chi(x) = 0$. But this forces $|x|>R_3$ since $\chi$ is
identically one on $B(0,R_3)$; in particular this means $F''_\theta(x)
= \tan(\theta)\text{Id}$, and hence $F''_\theta(x)\eta = 0$ which implies that
$\eta = 0$, contradicting $|\eta|^2=n(x)>0$. This argument establishes
that $\sigma_h(\widetilde P_\theta)$ is never zero, with the uniform
bound away from zero following by the ellipticity for $\xi$ large and
by noting that $\sigma_h(\widetilde P_\theta)$ does not depend on $x$
for $|x|$ sufficiently large. We have therefore established that
$\widetilde P_\theta:H^2_h\to L^2$ is invertible for all sufficiently small $h$, with $\|\widetilde P_\theta^{-1}\|_{L^2\to H^2_h}$ uniform in $h$.

Step 2:~We now claim that if  $M$ and $R_3$ are sufficiently large, then 
\beq\label{eq:spots4}
\|z\psi\widetilde P_\theta^{-1}\|_{L^2\to L^2}\le\frac{1}{2}
\eeq
for all $h$ sufficiently small and all $z\in\Omega$. Indeed, note that if $\widetilde\chi\in C^\infty_{\rm comp}(B(0,R_1))$ is identically one on the support of $\psi$ and $\|\widetilde\chi\|_{L^\infty}\le 1$, then $z\psi\widetilde P_\theta^{-1} = (z\psi)(\widetilde\chi\widetilde P_\theta^{-1})$, and hence
\beq\label{eq:spots3}
\|z\psi\widetilde P_\theta^{-1}\|_{L^2\to L^2}\le \|z\psi\|_{L^2\to L^2}\|\widetilde\chi\widetilde P_\theta^{-1}\|_{L^2\to L^2}\le \left(\sup_{\Omega}|z|\right)\|\psi\|_{L^\infty}\|\widetilde\chi\widetilde P_\theta^{-1}\|_{L^2\to L^2}.
\eeq
Next, $\widetilde\chi\widetilde P_\theta^{-1}\in \Psi_h^{-2}$, with
\beq\label{eq:spots2}
\sigma_h(\widetilde\chi\widetilde P_\theta^{-1})(x,\xi)= \frac{\widetilde\chi(x)}{\sigma_h(\widetilde P_\theta)(x,\xi)} = \frac{\widetilde\chi(x)}{|\xi|^2-n(x)-iM\chi(\xi)^2},
\eeq
where the last equality follows since $\widetilde\chi(x)$ is supported in $B(0,R_1)$, and, for $x\in B(0,R_1)$, $\Lap_\theta = \Lap$ and $\chi(x) = 1$. 
For $x\in B(0,R_1)$, 
\[\left|\Im\left(|\xi|^2-n(x)-iM\chi(\xi)^2\right)\right| = M\chi(\xi)^2 \ge M\text{ if }|\xi|\le R_3\]
and
\[\left|\Re\left(|\xi|^2-n(x)-iM\chi(\xi)^2\right)\right| = ||\xi|^2-n(x)| \ge R_3^2-n(x)\text{ if }|\xi|\ge R_3\]
as long as $R_3^2-n(x)>0$. Hence, if $R_3$ is chosen large enough (depending on $M$) so that $R_3^2-\sup_{\mathbb{R}^d}n(x) \ge M$, then 
\[\left||\xi|^2-n(x)-iM\chi(\xi)^2\right|\ge M\text{ for all }\xi\]
if $x\in B(0,R_1)$, and  thus, by \eqref{eq:spots2} and the fact that $\|\widetilde\chi\|_{L^\infty}\le 1$,
\[|\sigma_h(\widetilde\chi\widetilde P_\theta^{-1})|\le \frac{1}{M}.\]
By Lemma \ref{lem:norm_symbol} below, 
\[\|\widetilde\chi\widetilde P_\theta^{-1}\|_{L^2\to L^2} \le \frac{1}{M}+O(h).\]
Combining this with \eqref{eq:spots3}, we see that if 
$M >
2\left(\sup_{\Omega}|z|\right)\|\psi\|_{L^\infty}$ then 
\[\|z\psi P_\theta^{-1}\|_{L^2\to L^2} \le \left(\sup_{\Omega}|z|\right)\|\psi\|_{L^\infty}\left(\frac{1}{M}+O(h)\right) \le \frac{1}{2}\]
for all sufficiently small $h$. We have therefore established \eqref{eq:spots4} and Step 2 is complete.

We now complete the proof of the lemma. Step 2 implies that $I-z\psi P_\theta^{-1}$ is invertible for all $z\in\Omega$, with $\|I-z\psi P_\theta^{-1}\|_{L^2\to L^2}\le 2$.
By the definition of $W$ \eqref{eq:W},
\[W = \widetilde P_\theta - z\psi = (I-z\psi\widetilde P_\theta^{-1})\widetilde P_\theta,\]
so that $W:H^2_h\to L^2$ is invertible, with
\begin{align*}
W^{-1} &= \widetilde P_\theta^{-1}(I-z\psi\widetilde P_\theta^{-1})^{-1} 
\end{align*}
and
\begin{align*}
\|W^{-1}\|_{L^2\to H^2_h}&\le \|\widetilde P_\theta^{-1}\|_{L^2\to H^2_h}\|I-z\psi P_\theta^{-1}\|_{L^2\to L^2} \le 2\|\widetilde P_\theta^{-1}\|_{L^2\to H^2_h}
\end{align*}
for all $z\in\Omega$.
\end{proof}

As a consequence of Lemma~\ref{lem:W},
\begin{equation}\label{eq:PW}
P_\theta-z \psi  =W + iMQ 
= W(I + W^{-1} i M Q).
\end{equation}
Now let
\beq\label{eq:K}
K(z):=W^{-1} i M Q
\eeq
so that, by \eqref{eq:PW}, 
\begin{equation}\label{eq:PWK}
P_\theta-z \psi  =W(I +K(z))
\end{equation}
By Lemma \ref{lem:W}, $P_\theta -z\psi$ is not invertible iff $I+K(z)$ is not invertible.
Observe that $K(z)$ is compact because $Q$ compact and $W^{-1}$ bounded; therefore, by Part (i) of Theorem \ref{thm:trace}, 
 $P_\theta -z\psi$ is not invertible iff $\det (I +K(z))=0$.

Recall that our goal is to count the number of poles 
of $(P_\theta-z\psi)^{-1}$ 
in $\Omega$; this is equivalent to counting the number of zeros
of the holomorphic function $z\mapsto \det(I+K(z))$ in $\Omega$. If we let $m_K(z)$ denote the order of a zero of $\det(I+K(z))$ (with $m_K(z)=0$ if $z$ is not a zero), then
\[\big(\text{the number of zeros of }\det(I+K(z))\text{ in }\Omega\big) \le \sum_{z\in\Omega} m_K(z).\]
On the other hand,
by Jensen's formula (see, e.g., \cite[Equation D.1.11]{DyZw:19}, \cite[\S3.61]{Ti:39}),
\begin{equation}\label{eq:Jensen}
\sum_{z\in \Omega} 
m_K(z) 
\leq C \sup_{z\in \Omega'} \log |\det(I+ K(z))| -  C \log  |\det(I+ K(z_0))|
\end{equation}
where $\Omega'\supset \Omega$ and $z_0\in \Omega'$.

It thus suffices to estimate the right-hand side. 
Arguing exactly as in \cite[Equation 7.2.8]{DyZw:19}, since the trace-class norm of $Q$, $\|Q\|_{\mathcal{L}_1}$, (defined in Definition \ref{def:trace_class}) equals the trace of $Q$ as $Q\geq 0$, we obtain
\beq\label{eq:boundQ}
\| Q\|_{\mathcal{L}_1}\leq C h^{-d}.
\eeq
Therefore, by the definition of $K(z)$ \eqref{eq:K}
and the composition property of the trace class norm \eqref{eq:composition_trace}, 
$K(z) \in \mathcal{L}_1$ with 
\beq\label{eq:boundK}
\|K(z)\|_{\mathcal{L}_1}\leq M \| W^{-1} \|_{L^2\to L^2} \| Q\|_{\mathcal{L}_1}.
\eeq
Since $K(z)\in \mathcal{L}_1$, Part (ii) of Theorem \ref{thm:trace} implies that  
\beq\label{eq:B511}
\log |\det (I+K(z))|\leq \|K(z)\|_{\mathcal{L}_1}.
\eeq
Combining \eqref{eq:B511}, \eqref{eq:boundK}, \eqref{eq:boundQ} and 
Lemma \ref{lem:W}, we obtain that 
\begin{equation}\label{eq:Jensen1}
\log |\det (I+K(z))| \leq C h^{-d}.
\end{equation}
To obtain a  lower bound on $ \log  |\det(I+ K(z_0))|$ for some $z_0\in \Omega$, we begin by observing that, 
by \eqref{eq:PWK},
\begin{align}\nonumber
\big(I+K(z)\big)^{-1}&=(P_\theta-z \psi)^{-1}W\\ \nonumber
&=(P_\theta-z \psi)^{-1}(P_\theta-iM Q-z \psi ),\\ \nonumber
&= I -(P_\theta-z \psi)^{-1}iMQ \\
&=: I + \widetilde{K}(z).
\label{eq:PWK3}
\end{align}
Thus
\begin{align}\label{eq:nursery0}
\log | \det (I + K(z))| = - \log |\det (I+ \widetilde{K}(z))|,
\end{align}
and we need an upper bound on $\log |\det (I+ \widetilde{K}(z_0))|$ for some $z_0\in \Omega$.

By \eqref{eq:B511} and \eqref{eq:boundQ}, 
\beq\label{eq:nursery1}
\log |\det (I+ \widetilde{K}(z))| \leq C h^{-d} \| (P_\theta-z\psi)^{-1}\|_{L^2\to L^2},
\eeq
and so we therefore need an upper bound on $\| (P_\theta-z\psi)^{-1}\|_{L^2\to L^2}$ for some $z_0\in \Omega$.

 \begin{lemma}[Cut-off resolvent bound for $\Im z>0$]\label{lem:psi}
Assume $\psi \in C_{\rm comp}^\infty(\Rea^d;\Rea)$.
Suppose that $\psi\geq c >0$ on a set that geometrically controls all
backward trapped rays for $P$. Then there exists $\smallpt>0$ such that the following is true. 
Given $\chi\in C^\infty_{\rm comp}(\mathbb{R}^d;\Rea)$, $h_0>0$, and a
choice of function $\Lambda(h)=o(h)$,
there exists $C>0$ such that, for all $z=z(h)$ with
$\abs{\Re z(h)}<\smallpt$, $0<\Im z(h)\leq \Lambda(h)$, and $0<h<h_0$,
\beq\label{eq:key}
\|\chi (P-z \psi-i0)^{-1}\chi \|_{L^2\to L^2} \leq \frac{C}{\Im z}.
\eeq
 \end{lemma}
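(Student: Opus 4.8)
The plan is to obtain the bound \eqref{eq:key} by contradiction, running the standard defect-measure argument for resolvent estimates in the presence of a damping term. Suppose the bound fails; then there exist sequences $h_j\to 0$, $z_j$ with $|\Re z_j|<\smallpt$ and $0<\Im z_j\le\Lambda(h_j)=o(h_j)$, and $f_j\in L^2$ such that, writing $u_j:=\chi(P-z_j\psi-i0)^{-1}\chi f_j$, we have $\|u_j\|_{L^2}=1$ while $\|f_j\|_{L^2}\cdot (\Im z_j)^{-1}\to 0$, i.e.\ $\|f_j\|_{L^2}=o(\Im z_j)$. After passing to a subsequence we may extract a semiclassical defect measure $\mu$ for (a suitable localization of) $u_j$. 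The first step is the basic energy identity: pairing $(P-z_j\psi)u_j = \chi f_j + (\text{commutator terms from }\chi)$ against $u_j$ and taking imaginary parts, using that $P$ is formally self-adjoint, yields
\begin{equation*}
(\Im z_j)\int \psi |u_j|^2 \lesssim \big|\langle \chi f_j, u_j\rangle\big| + (\text{lower order}) \lesssim \|f_j\|_{L^2} = o(\Im z_j),
\end{equation*}
so $\int\psi|u_j|^2 \to 0$. Since $\psi\ge c>0$ on the control set, this shows $\mu$ gives no mass over (a neighborhood of) that set.

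The second step is propagation: because $\Im z_j = o(h_j)$ and $\|f_j\|=o(\Im z_j)=o(h_j)$, the right-hand side of the equation for $u_j$ is $o(h_j)$ in $L^2$, so $u_j$ is an $o(h_j)$-quasimode for $P$ (away from $\supp\chi$ issues, which one handles by choosing cutoffs so that $\chi\equiv1$ on a large ball and using the outgoing property / complex scaling to control the exterior). Hence $\mu$ is invariant under the Hamilton flow of $p=|\xi|^2-n$, is supported in $\Sigma=\{p=0\}$, and — by the standard argument that outgoing solutions have no incoming mass — $\mu$ is supported in the backward trapped set. Combining invariance with the geometric control hypothesis: every point in $\supp\mu$ lies on a backward trapped bicharacteristic, which by assumption meets the control set $\{\psi\ge c\}$ at some time $T<0$; flow-invariance of $\mu$ then transports the vanishing of $\mu$ on the control set back to that point, forcing $\mu\equiv 0$, i.e.\ $u_j\to0$ in $L^2_{\rm loc}$ microlocally. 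Quantitatively one gets $\|\chi u_j\|_{L^2}\to0$, contradicting $\|u_j\|_{L^2}=1$ once one also controls $u_j$ near infinity (again via the outgoing condition and the fact that $P=-h^2\Delta-1$ there, so no trapping and standard nontrapping estimates apply on the exterior).

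The technical heart, and the main obstacle, is making the propagation/control argument rigorous in the \emph{non-self-adjoint, exterior-domain} setting with only $L^\infty$ regularity a priori on $n$ (for Theorem \ref{thm:main} we do have $n\in C^\infty$, which is what makes defect measures clean here). Concretely one must: (i) justify existence of the defect measure and its flow-invariance given the $o(h)$ error, which requires the commutator $[P,A]$ computation with $A=\mathrm{Op}_h(a)$ and the observation that the contribution of the damping $z_j\psi$ to the commutator identity is $O(|z_j|)\cdot\|u_j\|^2 + (\Im z_j)(\cdots)$, both negligible; (ii) handle the cutoff $\chi$: since $\chi(P-z\psi-i0)^{-1}\chi$ rather than the full resolvent appears, one works with $v_j := \chi_1 u_j$ for a cutoff $\chi_1\succ\chi$ equal to one on a large ball, absorbs $[P,\chi_1]u_j$ into the error using a non-trapping exterior estimate, and notes $[P,\chi_1]$ is supported where the flow is non-trapped; (iii) control the region near infinity, where one invokes that $(P_0-z-i0)^{-1}$ for the free operator has a polynomial (indeed $O(1/h)$) bound away from trapping, or equivalently uses the complex-scaled operator $P_\theta$ of \S\ref{sec:mero} to compactify. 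Steps (i)–(iii) are by now standard (this is exactly the scheme of \cite{StVo:95,StVo:96,TaZw:98} adapted to a damping term, as in the treatment of the damped wave equation), but assembling them with the precise hypothesis that $\psi>0$ merely on a \emph{geometric control set} — rather than everywhere — is where care is needed; the payoff of that hypothesis is precisely that it is the minimal condition under which the defect measure is forced to vanish.
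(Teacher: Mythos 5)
Your overall strategy (contradiction, defect measure, imaginary-part/damping estimate, flow invariance, geometric control, outgoing property) is the same as the paper's, but there is a genuine gap at the propagation step: you treat $u_j$ as an $o(h)$-quasimode of $P$ and propagate along the flow of $p=|\xi|^2-n$ on $\Sigma_p$. This is only correct if $\Re z(h_j)\to 0$, which the lemma does not assume: only $\Im z(h_j)=o(h)$ is small, while $\Re z(h_j)$ may converge to any $z_0\in[-\smallpt,\smallpt]$. The term $\Re z(h_j)\,\psi u_j$ is $O(1)$, not an error, so the correct limiting symbol is $p'=|\xi|^2-n-z_0\psi$; the measure is supported in $\Sigma_{p'}$ and invariant under $H_{p'}$, not $H_p$. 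The geometric-control hypothesis is stated for $p$, so to close the argument one must show it persists for all $p'$ with $|z_0|\le\smallpt$ — and this is precisely what determines $\smallpt$, whose existence is part of the lemma's conclusion and which your proof never produces. The paper handles this at the outset: every point of $\Sigma_p$ reaches $\{\psi\ge c\}\cup\{|x|>R_0,\ x\cdot\xi\le 0\}$ under the backward flow, and by compactness of the backward trapped set this property is stable under small perturbations of the symbol; smallness of $\smallpt$ enters only there.

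A secondary gap is the energy identity. You work with the cut-off object $\chi(P-z\psi-i0)^{-1}\chi f_j$ and dismiss the cutoff commutator contributions as ``lower order,'' but $[-h^2\Delta,\chi]$ applied to the full solution is $O(h)$ (times norms on an annulus), whereas the term you must retain, $(\Im z_j)\int\psi|u_j|^2$, is only $o(h)$; so the commutator is not negligible without an additional ingredient (e.g.\ an away-from-trapping exterior resolvent bound for the perturbed non-self-adjoint operator, which you allude to but neither state nor prove). The paper avoids this entirely: it normalizes the \emph{full} outgoing solution by its $L^2(B(0,R_\chi))$ norm, pairs the equation with $\bar u_j$ over $B(0,R_\chi)$, and uses the sign property $\Im\langle {\rm DtN}\,u_j,u_j\rangle_{\partial B(0,R_\chi)}\ge 0$ of the exterior Dirichlet-to-Neumann map to get $\delta(h_j)\langle\psi u_j,u_j\rangle=o(\delta(h_j))$ directly, hence $\mu(\psi)=0$. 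Relatedly, the vanishing of $\mu$ near incoming points is not automatic ``from the outgoing property''; in the paper it is obtained from the factorization \eqref{factorization} together with \cite[Propositions 2.2 and 3.5]{Bu:02}, and some such justification is needed in your argument as well.
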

 
 \begin{lemma}[Penetrable-obstacle cut-off resolvent bound for $\Im z>0$]\label{lem:psiT}
Given $n_i>0$ and $\obst\subset \mathbb{R}^d$ compact and Lipschitz, let $n$ be as in \eqref{eq:n}, and assume that $\supp \psi \supset \obst$ and there exists $c>0$ such that $\psi \geq c$ on $\obst$.
Given $\chi\in C^\infty_{\rm comp}(\mathbb{R}^d)$ and $k_0>0$ there exists $C>0$ such that,  for all $\Im z >0$ and $0<h<h_0$, 
\beq\label{eq:keyT}
\|\chi (
P -z \psi-i0)^{-1}\chi \|_{L^2\to L^2} \leq C h^{-2}\frac{ (1+|z|^2)}{\Im z}.
\eeq
\end{lemma}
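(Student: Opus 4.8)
The plan is to reduce \eqref{eq:keyT} to the non-trapping bound for the \emph{free} Helmholtz resolvent, by moving the zeroth-order terms of $P-z\psi$ to the right-hand side and observing that the perturbation $n-1+z\psi$ is, on its support, controlled by an imaginary-part pairing. Write $P_0:=-h^2\Delta-1$, fix $R_1>R_0$ and $\chi_1\in C^\infty_{\rm comp}(B(0,R_1))$ with $\chi_1\equiv1$ on $\supp(1-n)\cup\supp\psi\cup\supp\chi$, and put $u:=(P-z\psi-i0)^{-1}\chi f$, which is outgoing by Lemma \ref{lem:joey}. By \eqref{eq:P},
\[
P_0u=(n-1+z\psi)u+\chi f=:g\in L^2_{\rm comp}(\Rea^d).
\]
Since the principal part $-h^2\Delta$ has constant coefficients and $g\in L^2_{\rm loc}$, elliptic regularity gives $u\in H^2_{\rm loc}(\Rea^d)$, and the uniqueness statement of Lemma \ref{lem:allgood} (applied with $n\equiv1$, $\psi\equiv0$) identifies $u=(P_0-i0)^{-1}g$. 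As $\|\chi(P-z\psi-i0)^{-1}\chi f\|_{L^2}\le\|\chi\|_{L^\infty}\|\chi_1u\|_{L^2}$, it suffices to bound $\|\chi_1u\|_{L^2}$ by the right-hand side of \eqref{eq:keyT} times $\|f\|_{L^2}$.

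\emph{Step 1: the imaginary-part estimate.} Pair $(P-z\psi)u=\chi f$ with $u$ over $B(0,R)$, take imaginary parts, and let $R\to\infty$; the boundary flux at infinity is $\le0$ by the Sommerfeld radiation condition (the standard Morawetz--Rellich sign, valid because $u$ solves $P_0u=0$ outside $B(0,R_1)$). Using that $\psi$ is single-signed, normalised so that $\psi\ge0$, the pairing gives
\[
(\Im z)\,\|\psi^{1/2}u\|_{L^2}^2\ \le\ \big|\langle\chi f,u\rangle_{L^2}\big|\ \le\ \|\chi\|_{L^\infty}\,\|f\|_{L^2}\,\|\chi_1u\|_{L^2}.
\]
Since $n-1=(n_i-1)1_\obst$ and $\psi\ge c$ on $\obst$, both contributions of the perturbation to $g$ are then controlled by $\|\psi^{1/2}u\|_{L^2}$:
\[
\|(n-1)u\|_{L^2}=|n_i-1|\,\|u\|_{L^2(\obst)}\le\frac{|n_i-1|}{\sqrt{c}}\,\|\psi^{1/2}u\|_{L^2},\qquad \|z\psi u\|_{L^2}\le|z|\,\|\psi\|_{L^\infty}^{1/2}\,\|\psi^{1/2}u\|_{L^2}.
\]

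\emph{Step 2: free resolvent bound and closing the loop.} By the non-trapping bound for the free Helmholtz resolvent, $\|\chi_1(P_0-i0)^{-1}\chi_1\|_{L^2\to L^2}\le Ch^{-1}$; hence, using $g=\chi_1g$ and then Step 1,
\[
\|\chi_1u\|_{L^2}\le Ch^{-1}\|g\|_{L^2}\le Ch^{-1}\big((1+|z|)\,\|\psi^{1/2}u\|_{L^2}+\|f\|_{L^2}\big)\le Ch^{-1}\big((1+|z|)(\Im z)^{-1/2}\|f\|_{L^2}^{1/2}\|\chi_1u\|_{L^2}^{1/2}+\|f\|_{L^2}\big).
\]
The arithmetic--geometric mean inequality absorbs $\tfrac12\|\chi_1u\|_{L^2}$ coming from the first term, so
\[
\|\chi_1u\|_{L^2}\le C\big(h^{-2}(1+|z|)^2(\Im z)^{-1}+h^{-1}\big)\|f\|_{L^2}\le Ch^{-2}\,\frac{1+|z|^2}{\Im z}\,\|f\|_{L^2},
\]
where the last step uses $\Im z\le|z|\le1+|z|^2$ (and, w.l.o.g., $h_0\le1$) to absorb the $h^{-1}$ term; this proves \eqref{eq:keyT}.

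Equivalently, and more in the spirit of \cite{MoSp:19, MoLu:68, Mo:75}, one may avoid citing the free resolvent bound and instead run the Morawetz--Rellich identity (commuting with $x\cdot\nabla u$ plus lower-order terms) directly for $-h^2\Delta-n$: this produces, besides the analogues of the terms above, a boundary term $(n_i-1)\int_{\partial\obst}(x\cdot\nu)|u|^2\,dS$, and since $\obst$ is only Lipschitz (not star-shaped) this term has no definite sign and must be estimated — at the cost of exactly the $h$- and $z$-powers already present — via the multiplicative trace inequality and Step 1's control of $\|u\|_{L^2(\obst)}$; I expect this boundary term to be the only genuinely delicate point of the Morawetz route. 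The structural point is that the reduction to the free operator works here, unlike in the smooth case of Lemma \ref{lem:psi} where genuine trapping forces a propagation-of-singularities argument, \emph{because} $1-n$ is supported on $\obst$, where $\psi\ge c$ lets Step 1 control $u$; the real work is therefore Step 1. The remaining care needed is the sign of the flux at infinity (and, in the Morawetz variant, the boundary terms at infinity and the distribution $x\cdot\nabla n$ carried by the Lipschitz hypersurface $\partial\obst$), all of which is routine. Finally, the factors $h^{-2}$ and $1+|z|^2$ are intrinsic: they arise from pairing the $O(h^{-1})$ free bound with the square-root gain $(\Im z)^{-1/2}$ of Step 1, and from the single power of $z$ multiplying $\psi u$, respectively.
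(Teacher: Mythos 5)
Your proof is correct, but it takes a genuinely different route from the paper. After the imaginary-part estimate (your Step 1, which is identical to the paper's \eqref{eq:Green}, including the implicit use of $\psi\ge 0$ on all of $\supp\psi$ to make sense of $\|\psi^{1/2}u\|$ and of $\|\psi u\|\le\|\psi\|_{L^\infty}^{1/2}\|\psi^{1/2}u\|$), the paper does \emph{not} reduce to the free resolvent: it runs the Morawetz identity \eqref{eq:morid1int} with the multiplier $x\cdot\nabla u-ih^{-1}Ru+(d-1)u/2$ separately on $\obst$ and on $B(0,R)\setminus\overline{\obst}$, uses Lemma \ref{lem:2.1} for the terms on $\partial B(0,R)$, and controls the interface term $(n_i-1)\int_{\partial\obst}(x\cdot\nu)|u|^2$ in \eqref{eq:Morawetz} by the multiplicative trace inequality \eqref{eq:mult_trace} combined with \eqref{eq:Green} --- i.e.\ precisely the ``Morawetz variant'' you sketch at the end, with the delicate point being exactly the one you identify. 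Your main route instead treats the whole perturbation $n-1+z\psi$ as a compactly supported right-hand side for $P_0=-h^2\Delta-1$, identifies $u=(P_0-i0)^{-1}g$ via Lemma \ref{lem:joey} and Rellich uniqueness, and invokes the standard nontrapping bound $\|\chi_1(P_0-i0)^{-1}\chi_1\|_{L^2\to L^2}\le Ch^{-1}$; since $\supp(n-1)\subset\overline{\obst}$ and $\psi\ge c$ on $\obst$, both pieces of $g$ are controlled by $\|\psi^{1/2}u\|$, and your arithmetic (AM--GM, then $(1+|z|)^2\le 2(1+|z|^2)$ and $\Im z\le 1+|z|^2\le h^{-1}(1+|z|^2)$) does land on the stated power $h^{-2}(1+|z|^2)/\Im z$. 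What your route buys: it is shorter, avoids trace theory on the Lipschitz interface and all boundary terms on $\partial\obst$, and uses no geometric information about $\obst$ at all; its one external ingredient is the free cut-off resolvent bound, which the paper does not use (and which is itself classically proved by a multiplier argument of Morawetz type, so the two proofs are morally built from the same bricks). Both your argument and the paper's prove an a priori bound on the outgoing solution, with absence of poles in $\{\Im z>0\}$ then following from meromorphy, so on that point you are at the same level of rigour as the paper.
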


 By \cite[Lemma 3.3]{GLS2}, Lemmas \ref{lem:psi} and \ref{lem:psiT} have the following corollary.

 \begin{corollary}[Bounds on the scaled operator for $\Im z>0$]
  \label{cor:psi}
 
 \
 
 (i) Under the assumptions of Lemma \ref{lem:psi}, 
given $c,h_0,\varepsilon>0$ there exists $C>0$
and $\smallpt>0$ such that, for all
$z=z(h)$ with
$\abs{\Re z(h)}<\smallpt$, $0<\Im z(h)=o(h)$,  $0<h<h_0$, and $\varepsilon \leq \theta \leq \pi/2-\varepsilon$, 
\beq\label{eq:key2}
\|(P_\theta-z \psi)^{-1} \|_{L^2\to L^2} \leq \frac{C}{\Im z}.
\eeq

(ii) Under the assumptions of Lemma \ref{lem:psiT}, given $h_0,\varepsilon>0$ there exists $C>0$ such that, for all $\Im z>0$, $0<h<h_0$,  and $\varepsilon \leq \theta \leq \pi/2-\varepsilon$, 
\beq\label{eq:key2T}
\|(P_\theta-z \psi)^{-1} \|_{L^2\to L^2} \leq C h^{-2}\frac{ (1+|z|^2)}{\Im z}.
\eeq
\end{corollary}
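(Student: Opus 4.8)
The goal is to deduce Corollary~\ref{cor:psi} from Lemmas~\ref{lem:psi} and~\ref{lem:psiT} together with \cite[Lemma 3.3]{GLS2}. The content of \cite[Lemma 3.3]{GLS2} (as used here) is precisely a statement allowing one to pass from a cutoff resolvent bound for the unscaled operator $P - z\psi$ to a bound on the \emph{full} (uncutoff) scaled resolvent $(P_\theta - z\psi)^{-1}$, with the scaling angle $\theta$ ranging over a compact subset of $(0,\pi/2)$; the price is at worst a constant factor depending on $\varepsilon$, $h_0$, and on the (fixed) geometry $R_0, R_1, R_2$ of the scaling. So the plan is essentially a bookkeeping argument: verify that the hypotheses of that transfer lemma are met in each of the two cases, then invoke it.

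Here is how I would carry it out. First, for part (i): Lemma~\ref{lem:psi} gives, for $\abs{\Re z} < \smallpt$ and $0 < \Im z = o(h)$, a bound $\|\chi(P - z\psi - i0)^{-1}\chi\|_{L^2 \to L^2} \le C/\Im z$ for \emph{every} $\chi \in C^\infty_{\rm comp}$; in particular this applies with $\chi$ chosen to be identically one on $B(0, R_2)$, which is the relevant cutoff for the region where $P_\theta$ differs from $P$ via complex scaling. I would then apply \cite[Lemma 3.3]{GLS2} with this choice of cutoff and with the right-hand-side weight $g(z) := C/\Im z$; since $1/\Im z \to \infty$ as $h \to 0$ under the constraint $\Im z = o(h)$, the hypotheses of the transfer lemma (which typically require the cutoff resolvent bound to dominate some fixed power of $h^{-1}$, or at least to be bounded below appropriately) are satisfied for $h$ small. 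The output is \eqref{eq:key2}, uniformly for $\theta \in [\varepsilon, \pi/2 - \varepsilon]$, after absorbing the $\theta$-dependent and geometry-dependent constants into $C$. Part (ii) is identical in structure: Lemma~\ref{lem:psiT} supplies $\|\chi(P - z\psi - i0)^{-1}\chi\| \le C h^{-2}(1 + |z|^2)/\Im z$ for $\Im z > 0$, again for arbitrary compactly supported $\chi$; choosing $\chi \equiv 1$ near $B(0,R_2)$ and feeding this into \cite[Lemma 3.3]{GLS2} yields \eqref{eq:key2T}.

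The one point requiring a little care — and the main (though still minor) obstacle — is checking that the quantitative hypotheses of \cite[Lemma 3.3]{GLS2} are actually met, i.e., that the cutoff-resolvent upper bounds on the right-hand sides of \eqref{eq:key} and \eqref{eq:keyT} are large enough (as functions of $h$) for the transfer mechanism to apply without losing the stated form of the bound. In case (i) this is where the constraint $\Im z = o(h)$ is used: it forces $1/\Im z \gtrsim h^{-1}$, so the cutoff resolvent bound is at least of order $h^{-1}$, which is the regime in which the scaled-resolvent estimate inherits the same order (up to a constant). In case (ii) the factor $h^{-2}$ already present in \eqref{eq:keyT} plays the analogous role, so no restriction on $\Im z$ beyond positivity is needed. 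Apart from this, the argument is a direct citation; one should also note that the constants produced depend on $\varepsilon$ only through the compactness of $[\varepsilon, \pi/2 - \varepsilon]$ and the continuity of the relevant estimates in $\theta$, which is standard for complex scaling and is exactly what \cite[Lemma 3.3]{GLS2} packages.

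Thus the proof is short:
\begin{proof}
Both parts follow from \cite[Lemma 3.3]{GLS2}. For part (i), fix $\chi \in C^\infty_{\rm comp}(\mathbb{R}^d)$ with $\chi \equiv 1$ on $B(0,R_2)$. By Lemma~\ref{lem:psi} there is $\smallpt > 0$ such that, for all $z = z(h)$ with $\abs{\Re z} < \smallpt$ and $0 < \Im z = o(h)$ and all $h$ sufficiently small, $\|\chi(P - z\psi - i0)^{-1}\chi\|_{L^2 \to L^2} \le C/\Im z$; since $\Im z = o(h)$ forces $1/\Im z \gtrsim h^{-1}$ as $h \downarrow 0$, \cite[Lemma 3.3]{GLS2} applies and yields \eqref{eq:key2}, with the constant uniform for $\varepsilon \le \theta \le \pi/2 - \varepsilon$ by the compactness of this $\theta$-interval. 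For part (ii), again fix $\chi \in C^\infty_{\rm comp}(\mathbb{R}^d)$ with $\chi \equiv 1$ on $B(0,R_2)$; by Lemma~\ref{lem:psiT}, $\|\chi(P - z\psi - i0)^{-1}\chi\|_{L^2 \to L^2} \le C h^{-2}(1 + |z|^2)/\Im z$ for all $\Im z > 0$ and $0 < h < h_0$, and the same application of \cite[Lemma 3.3]{GLS2} (the $h^{-2}$ factor supplying the required lower bound on the cutoff resolvent) gives \eqref{eq:key2T}, again uniformly for $\varepsilon \le \theta \le \pi/2 - \varepsilon$.
\end{proof}
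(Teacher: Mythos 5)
Your overall route is the same as the paper's: both deduce the corollary from \cite[Lemma 3.3]{GLS2}. The gap is in how you invoke that lemma. You treat it as directly citable once a suitable cutoff is chosen, and you concentrate your care on invented quantitative hypotheses (that the cutoff resolvent bound should dominate a power of $h^{-1}$), using $\Im z = o(h)$ to secure them. Neither point is the real issue: the restriction $\Im z = o(h)$ in part (i) is simply inherited from Lemma \ref{lem:psi}, and no lower bound on the size of the cutoff bound is what obstructs a direct citation. The obstruction is structural: \cite[Lemma 3.3]{GLS2} is stated for operators of the form $P - k^2$ with $P$ a (self-adjoint, non-semiclassical) black-box operator in the sense of \cite[Definition 4.1]{DyZw:19}. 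The operator here is $-h^2\Delta - n - z\psi$ with $\Im z > 0$: it is not self-adjoint, hence not a semiclassical black-box operator, and it cannot be recast in the form $P - k^2$ either (for $-h^2\Delta - n$ one divides by $n$ and multiplies by $k^2$, but one cannot do this with $n + z\psi$, which is complex and may vanish). So the hypotheses of the lemma you cite genuinely fail, and ``direct citation'' is not available.

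The paper closes exactly this gap: it observes that the \emph{proof} of \cite[Lemma 3.3]{GLS2} goes through verbatim in the present setting, because that proof approximates the scaled resolvent near the scatterer by the unscaled resolvent and away from it by the free scaled resolvent, and the only fact it draws from the black-box framework is the agreement of the scaled and unscaled resolvents away from the scaling region. In the present non-self-adjoint setting that agreement is supplied by Lemma \ref{lem:agreement}, which is precisely the ingredient this paper proves in order to make the transfer legitimate (and why the corollary is placed after it). Your write-up never mentions this substitution; without it, or an equivalent verification that the argument of \cite[Lemma 3.3]{GLS2} survives with $z\psi$ in place of the spectral parameter, the proof is an appeal to a lemma whose stated hypotheses are not satisfied.
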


\begin{proof}
\cite[Lemma 3.3]{GLS2} shows that the scaled operator inherits the bound on the cut-off resolvent in the black-box setting, uniformly for the scaling angle $\varepsilon \leq \theta\leq \pi/2-\varepsilon$; the idea of the proof 
is to approximate the scaled operator away from the black-box using the free (i.e., without scatterer) scaled resolvent, and approximate it near the black-box using the unscaled resolvent (and then use, crucially, Lemma \ref{lem:agreement}).

\cite[Lemma 3.3]{GLS2} is written for $P-k^2$, where $P$ is a non-semiclassical black-box operator (in the sense of the second part of \cite[Definition 4.1]{DyZw:19}). 
Whereas $P = -h^2 \Delta - n$ can be written in that form (by dividing by $n$ and multiplying by $k^2$), $P-z\psi= -h^2 \Delta -n- z\psi$ cannot (because of the possibility of $n+ z\psi$ being zero). However, the proof of \cite[Lemma 3.3]{GLS2} goes through verbatim:~although  $P-z\psi$ is
not self-adjoint when $z$ is not real, and hence 
 not a semiclassical black-box operator (in the sense of the first part of \cite[Definition 4.1]{DyZw:19}), the only result from the black-box framework that is used in the proof of 
\cite[Lemma 3.3]{GLS2}, is agreement of the scaled and unscaled resolvents away from the scaling region, and this is established in our case 
by Lemma \ref{lem:agreement}.
\end{proof}

To prove Part (a) of Theorem \ref{thm:main}, we choose $z_0$ in \eqref{eq:Jensen}/\eqref{eq:nursery1} to be $ih^{1+\epsilon}$ for $\epsilon>0$, since this is, firstly, in $\Omega\supset \{ z: |z|< \smallp\}$ if $h< \smallp$ and, secondly, a $z_0$ for which Part (i) of Corollary \ref{cor:psi} applies (since $\Re z_0=0$).
By \eqref{eq:nursery0}, \eqref{eq:nursery1}, and \eqref{eq:key2}, under the assumptions of Theorem \ref{thm:main}, 
\begin{equation}\label{eq:Jensen2}
\log | \det (I + K(i h^{1+\epsilon}))| = - \log |\det (I+ \widetilde{K}(i h^{1+\epsilon}))| \geq - Ch^{-d-1-\epsilon}.
\eeq
Combining \eqref{eq:Jensen}, \eqref{eq:Jensen1}, and \eqref{eq:Jensen2}, 
we obtain
\beqs
\sum_{z\in \Omega} m_\theta(z) \leq C h^{-d-1-\epsilon};
\eeqs
Part (a) of Theorem \ref{thm:main} then follows from Corollary
\ref{cor:poles}.

The proof of Part (a) of Theorem \ref{thm:mainT} is very similar; the only difference is that, since \eqref{eq:key2T} is valid for all $\Im z>0$, instead of just for $0<\Im z=o(h)$ as in \eqref{eq:key2}, we now obtain a lower 
bound on $\log |\det (I+ \widetilde{K}(z_0))|$ by choosing a $z_0\in \Omega$ with constant imaginary part. Indeed, under the assumptions of Theorem \ref{thm:mainT}, by \eqref{eq:nursery0}, \eqref{eq:nursery1}, and \eqref{eq:key2T},
\begin{equation}\label{eq:Jensen2T}
\log | \det (I + K(i\smallp/2 ))| = - \log |\det (I+ \widetilde{K}(i\smallp/2))| \geq - Ch^{-d-2},
\eeq
and thus 
\beqs
\sum_{z\in \Omega} m_\theta(z) \leq C h^{-d-2}
\eeqs
and Part (a) of Theorem \ref{thm:mainT} follows.

 It therefore remains to prove Lemmas \ref{lem:psi} and \ref{lem:psiT}.

\begin{proof}[Proof of Lemma \ref{lem:psi}]
We first fix $\smallpt$. By the assumption that $\psi\geq c>0$ on a set that geometrically controls all
backward-trapped null bicharacteristics for $-k^{-2} \Delta -n$,
every point in $\Sigma_p$, with $p:=|\xi|^2 -n$ \eqref{eq:p}, reaches the
set 
\beq\label{eq:back1}
\{\psi\geq c\} \cup
\{\abs{x}>R_0,\ x\cdot \xi\leq 0\}
\eeq 
(i.e., either on the support of $\psi$ or
incoming) under the backward $H_p$ flow.
This dynamical hypothesis is stable under small perturbations of $p$.
In particular, if $\smallpt$ is sufficiently small then for all $\abs{z}\leq \smallpt$, by
compactness of the backward trapped set within a closed ball, it
remains true that $\{\psi\geq c\}$ geometrically controls all
backward-trapped null bicharacteristics of $p'= p- z \psi$ as well (note that this is the only place where smallness of $\smallpt$ plays a role).

Having fixed $\smallpt$, we now suppose the asserted bound \eqref{eq:key} fails.  Then there exist a sequence of functions
   $g_j$, along with sequences $h_j\to 0$ and $z(h_j)$
   with $$\delta(h_j) := \Im z(h_j) \leq \Lambda(h_j),\quad \Re z(h_j) \to z_0
   \in [-\smallpt,\smallpt]$$ such that
   $$
\norm{\chi (P-z(h_j) \psi-i0)^{-1}\chi g_j} \geq j \delta(h_j)^{-1}\norm{g_j}.
$$
Let $R_\chi>0$ be such that $\supp \chi \subset B(0,R_\chi)$.
Below, we will use the weakening of this inequality to
\begin{equation}\label{weaker}
\norm{(P-z(h_j) \psi-i 0)^{-1}\chi g_j}_{L^2(B(0,R_\chi)} \geq j
\delta(h_j)^{-1}\norm{\chi g_j}.
\end{equation}
Now set
$$
u_j=\frac{(P-z(h_j) \psi-i0)^{-1}(\chi g_j)}{\norm{(P-z(h_j) \psi-i0)^{-1}(\chi g_j)}_{L^2(B(0,R_\chi))}},
$$
and
$$
f_j=\frac{\chi g_j}{\norm{(P-z(h_j) \psi-i0)^{-1}(\chi g_j)}_{L^2(B(0,R_\chi))}}.
$$
Then by \eqref{weaker} and Lemma~\ref{lem:joey},
\begin{align}\label{eq:back2}
\norm{u_j}_{L^2(B(0,R_\chi))}&=1,\\  \nonumber
f_j&=(P-z(h_j) \psi)  u_j \text{ is supported in } \supp \chi,\\ \nonumber
\norm{f_j}&=o(\delta(h_j)).
\end{align}
Now pass to a subsequence so that
we may extract a defect measure $\mu$, i.e., a positive Radon measure
on $T^*\RR^d$ so that for any $A=\Op_h (a)$
supported in $T^*B(0, R_\chi)$,
$$
\ang{A u_j,u_j} \to \mu(a) := \int a \, d\mu
$$
see Theorem \ref{thm:defect_existence} below. 

By \eqref{factorization} and \cite[Propositions~2.2 and
  3.5]{Bu:02}, the sequence $u_j$ is \emph{outgoing} in the sense
that the measure $\mu$ vanishes
on a neighborhood of all \emph{incoming points}, i.e., those with $x\cdot \xi \leq
0$, $\abs{x}\geq R_0$.

Now return to the equation
\beq\label{eq:Sat1}
(P-z(h_j) \psi)  u_j=f_j,
\eeq
rearranged as
$$
(P-\Re z(h_j)\psi) u_j=f_j+i\Im z(h_j) \psi u_j.
$$
Since 
$\norm{f_j}=o(\delta(h_j))$ and $\delta(h)\leq \Lambda(h)=o(h)$,
the family
$u_j$ is an $o(\delta(h))=o(h)$-quasimode of the $h$-dependent family of
operators
$$
(P-\Re z(h_j)\psi),
$$
whose semiclassical principal symbols are converging to
$$p':=\abs{\xi}^2-n-z_0 \psi,\quad z_0 \in [-\smallpt,\smallpt].$$
By Theorem \ref{thm:characteristic} below, 
$\supp \mu\subset \Sigma_{p'}$, the characteristic set of $P-z_0 \psi$.
Since $\Sigma_{p'}$ has compact support in the fibers of
$T^*\RR^d$, we can make sense of $\mu(a)$ even when $a$ has noncompact
support in fiber directions---cf.\ \cite[Lemma 3.5]{GaSpWu:18}.

Multiplying  \eqref{eq:Sat1} by $u_j$ and  integrating by parts over $B(0, R_\chi)$ 
(recalling that $\supp f_j \subset B(0, R_\chi)$) yields
\begin{align*}
&\langle h \nabla u_j , h \nabla u_j \rangle_{B(0,R_\chi)} - \langle n u_j, u_j\rangle_{B(0,R_\chi)} - z \langle \psi u_j,u_j\rangle_{B(0,R_\chi)}
-h \langle {\rm DtN}u_j ,u_j\rangle_{\partial B(0,R_\chi)} \\
&\qquad= \langle f_j, u_j\rangle_{B(0,R_\chi)},
\end{align*}
where ${\rm DtN}$ is the Dirichlet-to-Neumann map ($u \mapsto h^{-1}\partial_\nu u$) for the constant-coefficient Helmholtz equation outside $B(0,R_\chi)$. 
Taking  the imaginary part of the last displayed equation
and recalling that $\Im \langle {\rm DtN}u ,u\rangle_{\partial B(0,R)}\geq 0$ (see, e.g., \cite[Equation 2.6.94]{Ne:01}), we find that
$$
\delta(h_j) \ang{\psi u_j, u_j}\leq \abs{\ang{f_j, u_j}} \leq
\norm{f_j} \norm{u_j}_{L^2(B(0, R_\chi))}=o(\delta(h_j));
$$
hence $\mu(\psi)=0.$  This implies in particular that $$\supp \mu \cap
T^*\{\psi\geq c\}=\emptyset.$$

We now turn to the propagation of defect measure.  
Let $\varphi_t$ denote the flow along $H_{p'}$; i.e., $\varphi_t(\cdot) := \exp({tH_{p'}}(\cdot))$.
By
Theorem \ref{thm:propagation} and Corollary \ref{cor:invariance} below 
  \footnote{See also the remark on 
  \cite[Page 388]{DyZw:19} to deal with the fact that the symbol of $P-\Re
  z(h_j) \psi$ is $h$-dependent.},
for all $t\in \RR$ and all Borel sets $U \subset \Sigma_{p'}$,
$\mu(U)=0$ implies $\mu(\varphi_t (U)))=0$.  In other words, the
support of the defect measure is invariant under the null
bicharacteristic flow. (Owing to our smallness assumptions on $\Im
z(h_j)$ this propagation holds both forward and backward in time, but we only
need the above propagation statement for $t\geq 0$.)

Recall from the start of the proof that, with $\smallpt$ is sufficiently small, if $\abs{z_0}\leq \smallpt$ 
then
every point in $\Sigma_{p'}$ reaches the
set \eqref{eq:back1} under the backward $H_{p'}$ flow.
 Thus, since $\mu$ vanishes on the set \eqref{eq:back1} that is reached by all backwards $H_{p'}$
null bicharacteristics, it vanishes identically; this
contradicts the assumption that 
$\mu(T^* B(0, R_\chi))=1$, coming from \eqref{eq:back2} (i.e., $u$ is $L^2$-normalized on
$B(0, R_\chi)$).
\end{proof}

We now turn to Lemma~\ref{lem:psiT}.
As described in \S\ref{sec:ideas}, the proof of Lemma \ref{lem:psiT} involves a commutator with $x\cdot \nabla$ (plus lower-order terms). This is conveniently written via the following integrated  identity. 

\begin{lemma}[Integrated form of a Morawetz identity]
\label{lem:morid1int}
Let $D$ be a bounded Lipschitz open set, with boundary $\partial D$ and outward-pointing unit normal vector $\nu $.
Given $\alpha,\beta \in \Rea$, let 
\beqs
\mathcal{M}_{\alpha,\beta} v:= \bx\cdot \gv - i h^{-1}\beta v + \alpha v.
\eeqs
If 
\beqs
v \in V(D):= \bigg\{ 
v\in H^1(D) :\;  \Delta v\in L^2(D), \;\partial_\nu v \in L^2(\partial D),\; v \in H^1(\partial D)
\bigg\}
\eeqs
and $n,\alpha,\beta\in\Rea$, then
\begin{align}\nonumber 
&\int_D 2 \Re \big\{\overline{\cM_{\alpha,\beta} v } \,(h^2 \Delta +n) v \big\}
+ (2\alpha -d +2) h^2 \ngvs  +(d-2\alpha) n  \nvs
\\
&=\int_{\partial D}(\bx\cdot\nu )\left(h^2  \left|\partial_\nu v\right|^2 -h^2 |\nabla_{\partial D} v|^2 + n \nvs\right)
+ 2h\Re\Big\{\big(  \bx\cdot\overline{\nabla_{\partial D} v}+ i h^{-1}\beta \vb + \alpha \vb\big) h\partial_\nu v\Big\},
\label{eq:morid1int}
\end{align}
where $\nabla_{\partial D}$ is the surface gradient on $\partial D$ (such that $\nabla v = \nabla_{\partial D} v + \nu \partial_\nu v$ for $v\in C^1(\overline{D})$.
\end{lemma}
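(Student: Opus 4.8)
The plan is to derive the identity by multiplying the PDE-type expression by $\overline{\cM_{\alpha,\beta}v}$, integrating over $D$, and repeatedly integrating by parts, keeping careful track of all boundary contributions. The natural first step is to treat the three pieces of $\cM_{\alpha,\beta}v = x\cdot\nabla v - ih^{-1}\beta v + \alpha v$ separately, i.e.\ to compute
\[
\int_D 2\Re\big\{\overline{x\cdot\nabla v}\,(h^2\Delta + n)v\big\},\qquad
\int_D 2\Re\big\{\overline{-ih^{-1}\beta v}\,(h^2\Delta + n)v\big\},\qquad
\int_D 2\Re\big\{\overline{\alpha v}\,(h^2\Delta+n)v\big\},
\]
and then add. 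By a density argument it suffices to prove the identity for $v$ smooth up to $\overline D$ (the space $V(D)$ is precisely the closure under the norm for which both sides are continuous, using that $\partial_\nu v\in L^2(\partial D)$, $v\in H^1(\partial D)$, and $\Delta v\in L^2(D)$ control the right-hand side); one should remark on this reduction but not belabour it.

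The $\alpha v$ and $-ih^{-1}\beta v$ terms are the straightforward ones: Green's identity gives
$\int_D 2\Re\{\overline{\alpha v}(h^2\Delta + n)v\} = -2\alpha h^2\int_D|\nabla v|^2 + 2\alpha\int_D n|v|^2 + 2\alpha h^2\Re\int_{\partial D}\overline v\,\partial_\nu v$,
and the $\beta$ term is purely a boundary term after integration by parts, since $2\Re\{\overline{-ih^{-1}\beta v}\,h^2\Delta v\} = 2h\beta\,\Im(\overline v h\Delta v)$ and $\int_D \Im(\overline v\Delta v)$ reduces to a boundary integral (the $n|v|^2$ contribution is real, so it drops out of the imaginary part). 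The heart of the computation is the Rellich/Morawetz multiplier $x\cdot\nabla v$: here one uses the classical identity $2\Re\{\overline{x\cdot\nabla v}\,\Delta v\} = \nabla\cdot(\ldots) - (d-2)|\nabla v|^2$ together with $2\Re\{\overline{x\cdot\nabla v}\,n v\} = n\,x\cdot\nabla(|v|^2) = \nabla\cdot(nx|v|^2) - dn|v|^2$ (valid since $n$ is constant), and then converts the divergences to boundary integrals, splitting $\nabla v = \nabla_{\partial D}v + \nu\partial_\nu v$ on $\partial D$ to produce the $(x\cdot\nu)(h^2|\partial_\nu v|^2 - h^2|\nabla_{\partial D}v|^2 + n|v|^2)$ term and the cross term $2h\Re\{x\cdot\overline{\nabla_{\partial D}v}\,h\partial_\nu v\}$.

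The main obstacle — really just bookkeeping, but the place where sign errors are easiest — is collecting the bulk terms: the $x\cdot\nabla v$ multiplier contributes $-(d-2)h^2|\nabla v|^2 + dn|v|^2$ with the wrong-looking signs, the $\alpha v$ multiplier contributes $-2\alpha h^2|\nabla v|^2 + 2\alpha n|v|^2$, and one must check these combine to exactly $-(2\alpha - d + 2)h^2|\nabla v|^2 - (d - 2\alpha)n|v|^2$, which after moving to the left-hand side of \eqref{eq:morid1int} gives the stated $+(2\alpha - d + 2)h^2|\nabla v|^2 + (d-2\alpha)n|v|^2$. One then assembles all boundary contributions: the $(x\cdot\nu)$-weighted flux term from the Rellich multiplier, the $2\alpha h^2\Re\{\overline v\,h\partial_\nu v\}$ term, and the $\beta$-boundary term $2h\Re\{ih^{-1}\beta\overline v\,h\partial_\nu v\}$, which together are exactly the bracketed boundary integrand $2h\Re\{(x\cdot\overline{\nabla_{\partial D}v} + ih^{-1}\beta\overline v + \alpha\overline v)h\partial_\nu v\}$ plus the $(x\cdot\nu)(\ldots)$ term. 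Matching these against the right-hand side of \eqref{eq:morid1int} completes the proof.
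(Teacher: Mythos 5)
Your overall route is the same as the paper's: prove the identity for $v\in C^\infty(\overline D)$ by integration by parts (the paper packages the entire computation as the single pointwise divergence identity \eqref{eq:morid1} and applies the divergence theorem once, rather than treating the three pieces of $\mathcal{M}_{\alpha,\beta}v$ separately, but this is an immaterial difference), and then extend to $v\in V(D)$ by density and continuity of both sides. So the plan is sound; however, two points need repair before it is a proof.

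First, your bulk-term bookkeeping is inconsistent as stated. The interior (non-divergence) contribution of the multiplier $x\cdot\nabla v$ is $+(d-2)h^2|\nabla v|^2 - d\,n|v|^2$, not $-(d-2)h^2|\nabla v|^2 + d\,n|v|^2$: the Rellich identity reads $2\Re\{\overline{x\cdot\nabla v}\,\Delta v\}=\nabla\cdot\big(2\Re\{\overline{x\cdot\nabla v}\,\nabla v\}-x|\nabla v|^2\big)+(d-2)|\nabla v|^2$ (the sign $+(d-2)$ is forced by the boundary terms you want, namely $2\Re\{\overline{x\cdot\nabla v}\,\partial_\nu v\}-(x\cdot\nu)|\nabla v|^2$), and your own formula $2\Re\{\overline{x\cdot\nabla v}\,nv\}=\nabla\cdot(nx|v|^2)-d\,n|v|^2$ already gives the interior term $-d\,n|v|^2$, contradicting the $+d\,n|v|^2$ you list when collecting. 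With your stated contributions the pieces do not sum to the claimed $-(2\alpha-d+2)h^2|\nabla v|^2-(d-2\alpha)n|v|^2$ (the gradient coefficients agree only for $d=2$); with the corrected signs, together with the $\alpha$-term contribution $-2\alpha h^2|\nabla v|^2+2\alpha n|v|^2$, they do. Likewise the intermediate formula for the $\beta$-piece should be $2\Re\{\overline{-ih^{-1}\beta v}\,h^2\Delta v\}=-2h\beta\,\Im(\bar v\,\Delta v)$ (your version has the wrong sign and an extra factor of $h$), although your final assembly of the boundary term $2h\Re\{ih^{-1}\beta\bar v\,h\partial_\nu v\}$ is correct. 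Second, the density reduction is not a matter of definition: $V(D)$ is defined by the stated conditions, not as a closure, and the density of $C^\infty(\overline D)$ in $V(D)$ for a Lipschitz domain is the one genuinely nontrivial ingredient; the paper invokes \cite[Lemmas 2 and 3]{CoDa:98} for it, after which continuity of both sides of \eqref{eq:morid1int} in the $V(D)$ topology (using $\partial_\nu v\in L^2(\partial D)$, $v\in H^1(\partial D)$, $\Delta v\in L^2(D)$) completes the extension, exactly as you intend.
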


We later use \eqref{eq:morid1int} with $\beta=R$, in which case all the terms in $\mathcal{M}_{\alpha,\beta} v$ are dimensionally homogeneous.

\begin{proof}[Proof of Lemma \ref{lem:morid1int}]
If $v\in C^\infty(\overline{D})$, then \eqref{eq:morid1int} follows from divergence theorem
applied to the identity 
\begin{align}\nonumber 
2 \Re \big\{\overline{\mathcal{M}_{\alpha,\beta} v } \,(h^2 \Delta +n) v \big\} = &\, \nabla \cdot \bigg[ 2 h\Re\big\{\overline{\cM_{\alpha,\beta} v}\,  h\gv\big\} + \bx\big(n |v|^2 -h^2 |\nabla v|^2\big)\bigg] \\
&\hspace{10ex}
- (2\alpha -d +2)h^2  |\nabla v|^2 -(d-2\alpha)n |v|^2
\label{eq:morid1}
\end{align}
which can be proved by expanding the divergence on the right-hand side.
By \cite[Lemmas 2 and 3]{CoDa:98}, $C^\infty(\overline{D})$ is dense in $V(D)$ and the result then follows 
since \eqref{eq:morid1int} is continuous in $v$ with respect to the topology of $V(D)$. 
\end{proof}

The following lemma is proved using the multiplier $\cM_{(d-1)/2, |x|} u$ (first introduced in \cite{MoLu:68}) and consequences of the Sommerfeld radiation condition; see, e.g., \cite[Proof of Lemma 4.4]{MoSp:19}. 

\begin{lemma}[Inequality on $\partial B(0,R)$ used to deal with the contribution from infinity] \label{lem:2.1}
Let $u$ be a solution of the homogeneous Helmholtz equation $(h^2 \Delta +1) u=0$ in $\Rea^d\setminus \overline{B_{R_0}}$ (with $d\geq 2$), for some $R_0>0$, satisfying the Sommerfeld radiation condition \eqref{eq:src}. 
Then, for $R>R_0$, 
\begin{align}\nonumber
&\int_{\partial B(0,R)} R\left(h^2  \left|\pdiff{u}{r}\right|^2 - h^2 |\nabla_{\partial B(0,R)} u|^2 + |u|^2\right)   - 2 R\, \Im \int_{\partial B(0,R)} \bar{u}h \pdiff{u}{r}
\\&\hspace{2cm} + (d-1)h\Re \int_{\partial B(0,R)}\bar{u}h\pdiff{u}{r} \leq 0,
\label{eq:2.1}
\end{align}
\end{lemma}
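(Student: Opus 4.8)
The plan is to run the classical Morawetz--Ludwig argument with the multiplier $\cM_{(d-1)/2,\,|x|}u = x\cdot\nabla u - ih^{-1}|x|\,u + \tfrac{d-1}{2}\,u$ (introduced in \cite{MoLu:68}): integrate the associated differential identity over the annulus $B(0,R')\setminus\overline{B(0,R)}$ for $R'>R>R_0$, apply the divergence theorem, and let $R'\to\infty$, absorbing the contribution of the sphere at radius $R'$ using the radiation condition; this is essentially \cite[Proof of Lemma 4.4]{MoSp:19} carried out in general dimension. The first ingredient is the differential identity in $\{|x|>R_0\}$, where $n\equiv 1$ and $(h^2\Delta+1)u=0$: I would take the pointwise identity behind \eqref{eq:morid1} but with the constant $\beta$ replaced by the function $|x|$ (so the terms beyond \eqref{eq:morid1} come only from $\nabla|x| = x/|x|$), and check that after using $(h^2\Delta+1)u=0$ and taking $\alpha=(d-1)/2$ the zeroth-order terms all cancel, so that the identity collapses to
\beqs
\nabla\cdot V \;=\; \big|h\partial_r u - iu\big|^2 + h^2\big|\nabla_{\partial B(0,|x|)}u\big|^2 \;\geq\; 0 \qquad\text{in }\{|x|>R_0\},
\eeqs
where $V := 2h\,\Re\{\overline{\cM_{(d-1)/2,\,|x|}u}\; h\nabla u\} + x\,(|u|^2 - h^2|\nabla u|^2)$. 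The structural point is that on radiating solutions of the free equation the Morawetz density is the nonnegative sum of $|h\partial_r u - iu|^2$ (small by \eqref{eq:src}) and the tangential energy.

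Next, since $x$ is normal to $\partial B(0,r)$, the term $x\cdot\overline{\nabla_{\partial B(0,r)}u}$ drops out of $V\cdot\hat x$, and a direct expansion identifies $\int_{\partial B(0,r)}V\cdot\hat x$ with exactly the left-hand side of \eqref{eq:2.1} at radius $r$; call this $I(r)$. The divergence theorem on $B(0,R')\setminus\overline{B(0,R)}$, with outward normal $+\hat x$ on $\partial B(0,R')$ and $-\hat x$ on $\partial B(0,R)$, then gives
\beqs
I(R') - I(R) \;=\; \int_{R<|x|<R'}\Big(\big|h\partial_r u - iu\big|^2 + h^2|\nabla_{\partial B}u|^2\Big)\,dx \;\geq\; 0,
\eeqs
so $I(R)\leq I(R')$ for every $R'>R$, and it suffices to show $\liminf_{R'\to\infty} I(R')\leq 0$.

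For this last point I would invoke the standard refinements of the Sommerfeld radiation condition for the free Helmholtz equation outside a ball: as $r\to\infty$, uniformly in $x/r$, $u=O(r^{-(d-1)/2})$ and $h\partial_r u - iu = O(r^{-(d+1)/2})$ (e.g.\ from the Atkinson--Wilcox expansion; cf.\ \cite{Ne:01}). Writing $e := h\partial_r u - iu$ and substituting $h\partial_r u = iu + e$ into $I(r)$, the $|u|^2$-cross terms telescope and leave
\beqs
I(r) \;=\; r\!\int_{\partial B(0,r)}\!|e|^2 \;-\; r\!\int_{\partial B(0,r)}\!h^2|\nabla_{\partial B}u|^2 \;+\; (d-1)h\,\Re\!\int_{\partial B(0,r)}\!\bar u\,e .
\eeqs
The middle term is $\le 0$; by Cauchy--Schwarz the last term is $O(h)\,\|u\|_{L^2(\partial B(0,r))}\|e\|_{L^2(\partial B(0,r))} = O(r^{-1})\to 0$; and the first term is $r\cdot O(r^{d-1})\cdot O(r^{-(d+1)}) = O(r^{-1})\to 0$. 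Hence $I(r)\to 0$ as $r\to\infty$, which combined with $I(R)\leq I(R')$ yields $I(R)\leq 0$, i.e.\ \eqref{eq:2.1}.

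The only genuinely nontrivial input, and the part I expect to be the main obstacle, is the treatment of the contribution from infinity, i.e.\ showing $\liminf_{R'\to\infty} I(R')\leq 0$; everything else (verifying the $|x|$-version of \eqref{eq:morid1} and identifying $\int_{\partial B(0,r)}V\cdot\hat x$ with the left side of \eqref{eq:2.1}) is routine algebra. What is needed is precisely the improvement of the radiation condition from the bare $o(r^{-(d-1)/2})$ to $O(r^{-(d+1)/2})$ for $h\partial_r u - iu$ (equivalently, that $r\int_{\partial B(0,r)}|h\partial_r u - iu|^2\to 0$ along a sequence), which is classical for solutions of the free Helmholtz equation in the exterior of a ball but should be cited with care.
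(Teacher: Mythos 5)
Your proposal is correct and is essentially the paper's own argument: the paper proves Lemma \ref{lem:2.1} exactly by the Morawetz--Ludwig multiplier $\cM_{(d-1)/2,|x|}u$ together with the decay consequences of the radiation condition, citing \cite[Proof of Lemma 4.4]{MoSp:19}, which is the annulus/divergence-theorem argument you describe (and your identifications of $\nabla\cdot V$ with $|h\partial_r u-iu|^2+h^2|\nabla_{\partial B}u|^2$ and of the flux with the left-hand side of \eqref{eq:2.1} check out). Your flagged input, the improved decay $u=O(r^{-(d-1)/2})$, $h\partial_r u-iu=O(r^{-(d+1)/2})$ for radiating solutions outside a ball, is indeed the classical ingredient used there as well.
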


With Lemmas \ref{lem:morid1int} and \ref{lem:2.1} in hand, we now prove Lemma \ref{lem:psiT}.

\begin{proof}[Proof of Lemma \ref{lem:psiT}]
It is sufficient to prove that for any $R>0$ such that $\obst \subset B(0,R)$, given $f\in L^2_{\rm comp}(\mathbb{R}^d)$ with $\supp f \subset \overline{B(0,R)}$, 
the outgoing solution $u\in H^1_{\rm loc}(\mathbb{R}^d)$ to 
\beq\label{eq:PDET}
(-h^2 \Delta - n - z\psi)u = f \quad\text{ in }\mathbb{R}^d
\eeq
satisfies 
\beq\label{eq:STPT}
\| u\|_{L^2(B(0,R))}\leq Ch^{-2}\frac{ (1+|z|^2)}{\Im z} \| f\|_{L^2(B(0,R))}.
\eeq
Just as in the proof of Lemma \ref{lem:psi}, by multiplying \eqref{eq:PDET} by $\overline{u}$ and integrating over $B(0,R)$, 
\beqs
\int_{B(0,R)} h^{2} |\nabla u|^2 - n|u|^2 - z \psi|u|^2 - h\langle {\rm DtN}u ,u\rangle_{\partial B(0,R)} = \int_{B(0,R)} f \overline{u},
\eeqs
where ${\rm DtN}$ is the Dirichlet-to-Neumann map ($u \mapsto h^{-1}\partial_\nu u$) for the constant-coefficient Helmholtz equation outside $B_{R}$. 
As before, we
take the imaginary part of the last displayed equation
and use that  $\Im \langle {\rm DtN}u ,u\rangle_{\partial B(0,R)}\geq 0$ 
to obtain
\beq\label{eq:Green}
(\Im z)\int_{B(0,R)} \psi |u|^2 \leq -\Im \int_{B_{R_0}}f\overline{u};
\eeq 
our goal now is to control $\|u\|_{L^2(B(0,R))}$ in terms of $\|\psi^{1/2} u\|_{L^2(B(0,R))}$.

We now apply the identity \eqref{eq:morid1int} with $v=u$, $\alpha= (d-1)/2$, and $\beta= R$. We first choose $D = \obst$ and $n=n_i$ and then $D= B(0,R)\setminus \overline{\obst}$ and $n=1$. These applications of \eqref{eq:morid1int} is allowed, since the solution $u$ of \eqref{eq:PDET} when $\obst$ is Lipschitz is in $V(\obst)$ and $V(B(0,R)\setminus\overline{ \obst})$ by, e.g., \cite[Lemma 2.2]{MoSp:19}. Adding the two resulting identities, and then using Lemma \ref{lem:2.1} to deal with the terms on $\partial B(0,R)$, we obtain that
\begin{align}\nonumber
&\int_{\obst}h^2|\nabla u|^2 + n_i|u|^2  + \int_{B(0,R)\setminus \overline{\obst}} h^2|\nabla u|^2 + |u|^2 \\
&\leq 
- 2 \Re \int_{B(0,R)} \big(\overline{x \cdot \nabla u - i h^{-1} R u + (d-1)u/2}\big) ( f+ z\psi u) + \int_{\partial \obst} (x\cdot \nu) (n_i-1) |u|^2,
\label{eq:Morawetz}
\end{align}
where $\nu$ is the outward-pointing unit normal vector on $\partial
\obst$ (note that \eqref{eq:Morawetz} is contained in \cite[Equation 5.3]{MoSp:19}, where
the variables $\eta$, $a_o,$ $a_i$, $n_o$, $A_D$, $A_N$ in that equation are all set to one).
When $x\cdot \nu>0$ (i.e.,
$\obst$ is star-shaped) and $n_i<1$, the term in \eqref{eq:Morawetz}
on $\partial \obst$ has the ``correct" sign and 
then using the inequality 
\beq\label{eq:PeterPaul}
2ab \leq \epsilon a^2 + \epsilon^{-1}b^2\quad\text{ for } \quad a,b,\epsilon>0.
\eeq
in \eqref{eq:Morawetz} gives the bound
$h \| \nabla u \|_{L^2(B(0,R))} + \|u\|_{L^2(B(0,R))} \leq C k
\|f\|_{L^2}$ when $z=0$. Since we also want to consider $n_i>1$, and
we have control of $\|u\|^2_{L^2(\obst)}$ via \eqref{eq:Green}, we
instead recall the multiplicative trace inequality
   (see, e.g.,
\cite[Theorem 1.5.1.10]{Gr:85})
\begin{align}
\| u\|^2_{L^2(\partial \obst)} 
&\leq C\Big( \epsilon h^2 \|\nabla u\|^2_{L^2(\obst)} + h^{-2} \epsilon^{-1} \|u\|^2_{L^2(\obst)}\Big),
\label{eq:mult_trace}
\end{align}
for $\epsilon h^2<1$.

By \eqref{eq:Morawetz}, given $h_0>0$ there exists $C>0$ such that, for $0<h<h_0$, 
\begin{align}\nonumber
&h^{2} \|\nabla u\|^2_{L^2(B(0,R))} + \|u\|^2_{L^2(B(0,R))} \\ \nonumber
&\leq C h^{-1} \Big( h\|\nabla u\|_{L^2(B(0,R))}+\|u\|_{L^2(B(0,R))}\Big)\Big( \|f\|_{L^2(B(0,R))}+ |z| \|\psi^{1/2} u\|_{L^2(B(0,R))}\Big) \\
&\qquad+ \|u\|^2_{L^2(\partial\obst)}.\label{eq:boxing1}
\end{align}
Using in \eqref{eq:mult_trace} that $\psi \geq c$ on $\obst$, we obtain that 
\begin{align}
\| u\|^2_{L^2(\partial \obst)} 
&\leq C\Big( \epsilon h^2 \|\nabla u\|^2_{L^2(B(0,R))} + h^{-2} \epsilon^{-1} \|\psi^{1/2} u\|^2_{L^2(B(0,R))}\Big),
\label{eq:mult_trace2}
\end{align}
for $\epsilon h^2<1$. 
Then using \eqref{eq:mult_trace2} in the last term on the right-hand side of \eqref{eq:boxing1}, and \eqref{eq:PeterPaul} 
on the other terms, we find that 
\begin{align*}
&h^{2} \|\nabla u\|^2_{L^2(B(0,R))} + \|u\|^2_{L^2(B(0,R))} \\
&\leq C \Big ( h^{-2} \|f\|^2_{L^2(B(0,R))} + \epsilon h^2\|\nabla u\|^2_{L^2(\obst)} + ( 1 + h^{-2} \epsilon^{-1}) (1 + |z|^2) \|\psi^{1/2}u\|^2_{L^2(B(0,R))} \Big),
\end{align*}
By choosing $\epsilon$ sufficiently small, and then using \eqref{eq:Green}, we find that 
\begin{align*}
&h^{2} \|\nabla u\|^2_{L^2(B(0,R))} + \|u\|^2_{L^2(B(0,R))} \\
&\leq C \left( h^{-2} \|f\|^2_{L^2(B(0,R))}  + h^{-2} \frac{(1 + |z|^2)}{\Im z} \|f \|_{L^2(B(0,R))} \|u\|_{L^2(B(0,R))} \right),
\end{align*}
and the required result \eqref{eq:STPT} then follows from one last application of \eqref{eq:PeterPaul}.
\end{proof}

\section{Bounds on the solution-operator for real $z$ (proofs of Part (b) of Theorems \ref{thm:main} and \ref{thm:mainT}).}\label{sec:resolvent}

\begin{theorem}[Variant of semiclassical maximum principle \cite{TaZw:98, TaZw:00}]
\label{thm:scmp}
Let $\cH$ be an Hilbert space and $z\mapsto Q(z,h)\in\mathcal{L}(\cH)$
an holomorphic family of operators in a neighbourhood of
\beqs
\Omega(h):=\big(w-2a(h),w+2a(h)\big)+i\big(-\delta(h)h^{-L},\delta(h)\big),
\eeqs
where 
\beq\label{eq:restrict1}
0<\delta(h)<1,\qquad \tand \quad a(h)^{2}\geq Ch^{-3L}\delta(h)^{2}
\eeq
for some $L,C>0$. 
Suppose that
\begin{align}\label{eq:expbound}
\Vert Q(z,h)\Vert_{\cH\rightarrow\cH}&\leq\exp(Ch^{-L}),\qquad z\in\Omega,\\ \label{eq:absbound}
\Vert Q(z,h)\Vert_{\cH\rightarrow\cH}&\leq \frac{b(h)}{\Im
                                       z},\qquad \text{ for }
                                       \Im z >0,\ z \in \Omega,
\end{align}
with $b(h)\geq 1$.
Then, 
\beq\label{eq:scmp1}
\Vert Q(z,h)\Vert_{\cH\rightarrow\cH}\leq b(h) \delta(h)^{-1}\exp(C+1),\quad \tfa z\in\big[w-a(h),w+a(h)\big].
\eeq
\end{theorem}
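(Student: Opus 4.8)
The plan is to run the Hadamard-three-lines argument of \cite{TaZw:98, TaZw:00} in the form of a harmonic majorant on the rectangle $\Omega(h)$: reduce to a scalar holomorphic function, bound its logarithm above by the harmonic function on $\Omega(h)$ with boundary datum $\log(b(h)/\delta(h))$ on the top edge and $Ch^{-L}$ on the rest of the boundary, and then exploit the fact that $\Omega(h)$ is very wide and reaches much farther below the real axis (down to $-\delta(h)h^{-L}$) than above it (up to $\delta(h)$), so that the harmonic measure of the top edge seen from $[w-a(h),w+a(h)]$ is $1-O(h^{L})$ and the large $Ch^{-L}$ datum contributes only $O(1)$ there.

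\textbf{Reduction to scalars.} For unit vectors $u,\widetilde u\in\cH$ put $f(z):=\langle Q(z,h)u,\widetilde u\rangle_\cH$; this is holomorphic near $\overline{\Omega(h)}$, and since $\|Q(z,h)\|_{\cH\to\cH}=\sup_{\|u\|=\|\widetilde u\|=1}|f(z)|$ it suffices to bound $|f(z)|$ by the right-hand side of \eqref{eq:scmp1} uniformly in $u,\widetilde u$ and in $z\in[w-a(h),w+a(h)]$. Writing $b=b(h)$, $\delta=\delta(h)$, hypotheses \eqref{eq:expbound} and \eqref{eq:absbound} give
\[
\log|f(z)|\le Ch^{-L}\ \text{ on }\overline{\Omega(h)},\qquad \log|f(z)|\le\log\tfrac{b}{\delta}\ \text{ on the top edge }E:=\overline{\Omega(h)}\cap\{\Im z=\delta\},
\]
the latter because on $E$ one has $\Im z=\delta$ and $|f|\le b/\Im z$ (note $\log(b/\delta)\ge 0$ since $b\ge 1>\delta$).

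\textbf{Majorant and harmonic-measure estimate.} Since $\log|f|$ is subharmonic near $\overline{\Omega(h)}$, the two-constants theorem compares it with the harmonic function on $\Omega(h)$ equal to $\log(b/\delta)$ on $E$ and to $Ch^{-L}$ on $\partial\Omega(h)\setminus E$, giving for $z\in\Omega(h)$
\[
\log|f(z)|\le \log\tfrac{b}{\delta}\,\omega(z)+Ch^{-L}\big(1-\omega(z)\big)\le \log\tfrac{b}{\delta}+Ch^{-L}\big(1-\omega(z)\big),
\]
where $\omega(z)$ is the harmonic measure of $E$ in $\Omega(h)$ and $0\le\omega\le 1$. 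I would then estimate $1-\omega(z)$ for $z\in[w-a(h),w+a(h)]$ by comparing $\Omega(h)$ with the infinite strip $\{-\delta h^{-L}<\Im z<\delta\}$: in that strip the harmonic measure of the upper edge at any point of the real line equals $\delta h^{-L}/(\delta h^{-L}+\delta)=(1+h^{L})^{-1}$, so the strip contributes at most $h^{L}$ to $1-\omega$; and the discrepancy between the rectangle and the strip at $z$ is $O\!\big(\exp(-c\,a(h)/(\delta h^{-L}))\big)$, since $z$ lies at horizontal distance $\ge a(h)$ from both vertical edges of $\Omega(h)$ while $\Omega(h)$ has height of order $\delta h^{-L}$. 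By \eqref{eq:restrict1}, $a(h)/(\delta h^{-L})\ge \sqrt{C}\,h^{-L/2}\to\infty$, so this discrepancy is smaller than any power of $h$; hence $1-\omega(z)\le h^{L}+o(h^{L})$ for $h$ small.

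\textbf{Conclusion.} For $z\in[w-a(h),w+a(h)]$ and $h$ small, $Ch^{-L}(1-\omega(z))\le Ch^{-L}\big(h^{L}+o(h^{L})\big)=C+o(1)\le C+1$, so $\log|f(z)|\le\log(b/\delta)+C+1$, i.e.\ $|f(z)|\le b\delta^{-1}e^{C+1}$; taking the supremum over $u,\widetilde u$ yields \eqref{eq:scmp1}. The one genuinely nontrivial point is the harmonic-measure comparison in the third step — quantifying how nearly $\Omega(h)$ behaves like the strip away from its short ends — and this is exactly where the aspect-ratio condition \eqref{eq:restrict1} is consumed; the scalar reduction and the arithmetic with the constants are routine.
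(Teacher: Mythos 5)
Your argument is correct and, at heart, is the same Hadamard-three-lines mechanism the paper uses, but you execute it by a genuinely different route: the paper reduces to the scalar function $\langle Q(z+w,h)g,f\rangle_{\cH}$ and then simply cites the three-lines theorem in a rectangle, \cite[Lemma D.1]{DyZw:19}, with $R=2a(h)$, $\delta_+=\delta(h)$, $\delta_-=\delta(h)h^{-L}$, $M=M_-=\exp(Ch^{-L})$, $M_+=b(h)/\delta(h)$, the only work being to check that \eqref{eq:restrict1} implies the hypothesis (D.1.3) of that lemma; you instead prove the rectangle estimate directly via the two-constants theorem, comparing the harmonic measure $\omega$ of the top edge of $\Omega(h)$ with its value in the infinite strip and bounding the difference by the harmonic measure of the two short ends, which at points of $[w-a(h),w+a(h)]$ decays like $\exp\big(-c\,a(h)/(\delta(h)h^{-L})\big)$. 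This buys two things: the proof is self-contained (modulo the standard end-of-rectangle harmonic-measure estimate, which you assert rather than prove; note that with the usual barrier the factor $\sin\big(\pi(\Im z+\delta(h)h^{-L})/H\big)$, with $H$ the height of the rectangle, is of size $h^{L}$ at real $z$, so the ends contribute $O\big(h^{-L}e^{-\pi a(h)/H}\big)$ rather than $O\big(e^{-\pi a(h)/H}\big)$ --- still superpolynomially small, so your bound $Ch^{-L}(1-\omega)\le C+o(1)$ survives), and it reveals that the quadratic condition \eqref{eq:restrict1} is much stronger than your route needs: you use it only through the consequence that $a(h)/(\delta(h)h^{-L})$ grows faster than $\log(Ch^{-L})$, whereas the cited lemma's cruder treatment of the lateral edges is what forces the quadratic aspect-ratio hypothesis. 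The only cosmetic losses relative to the citation route are that the bound $b(h)/\delta(h)$ on the closed top edge should be obtained from \eqref{eq:absbound} by continuity (the hypothesis is stated on the open rectangle; the paper takes the same liberty), and that your constant $\exp(C+1)$ in \eqref{eq:scmp1} is obtained only for $h$ below a threshold depending on $C$ and $L$, which is immaterial since the theorem is only ever applied for $0<h<h_0$.
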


\begin{proof}[References for proof]
Let $f,g\in \cH$ with $\|f\|_{\cH}=\|g\|_{\cH}=1$, and let
\[
F(z,h):=\big\langle Q(z+w,h)g,f\big\rangle_{\cH}.
\]
The result \eqref{eq:scmp1} follows from the ``three-line theorem in a rectangle'' (a consequence of the maximum principle) stated as \cite[Lemma D.1]{DyZw:19} applied to the holomorphic family $(F(\cdot,h))_{0<h\ll1}$
with
\begin{align*}
R=2a(h),\qquad \delta_{+}=\delta(h),\qquad \delta_{-}=\delta(h)h^{-L},\\
M=M_{-}=\exp(Ch^{-L}),\qquad M_{+}=b(h)/\delta(h).
\end{align*}
Indeed, the condition \cite[Equation D.1.3]{DyZw:19}
\beqs
R^2 \delta_-^{-2} \geq \log\left(\frac{M}{\min_{\pm} M_{\pm}}\right)
\eeqs
becomes 
\beqs
a(h)^2 \delta(h)^{-2} h^{2L}\geq \log \left( \frac{\exp (Ch^{-L})}{b(h)/\delta(h)}\right) = C h^{-L} + \log (\delta(h)/b(h)),
\eeqs
which is ensured by \eqref{eq:restrict1} since $\delta(h)/b(h)<1$ and thus $ \log (\delta(h)/b(h))<0$.
\end{proof}

Part (b) of Theorems \ref{thm:main} and \ref{thm:mainT} is proved below using Theorem \ref{thm:scmp},
with $Q(z,h)= \chi (P - z\psi-i0)^{-1}\chi$, $\mathcal{H}=L^2$, 
 \eqref{eq:key}/\eqref{eq:keyT} providing the bound \eqref{eq:absbound}, and the following lemma providing the bound \eqref{eq:expbound}.

\begin{lemma}[Bounds on $(P -z\psi-i0)^{-1}$ away from poles]\label{thm:TZkey}
Given $\epsilon>0$, if the hypotheses of Theorem \ref{thm:main} hold, let $M:= d+1+\epsilon$. 
If the hypotheses of Theorem \ref{thm:mainT} hold, let $M:= d+2$. 
Let $\Omega\Subset \mathbb{C}$ containing the origin.
Let $h \mapsto g(h)$ be a positive function strictly bounded from above by $1$. Then 
there exist $h_0>0$ and $C_1>0$ such that, for $0<h<h_0$,
\begin{align}\label{eq:TZkey1}
\|\chi (P - z\psi-i0)^{-1}\chi \|_{L^2 \to L^2}
\leq C_1\exp\left(C_1 h^{-M} \log \left(\frac{1}{g(h)}\right) \right)\\
\qquad \text{ for all }\,
z \in \Omega \setminus \bigcup_{z_j \in \mathcal{P}} D(z_j, g(h)),\nonumber
\end{align}
where $\mathcal{P}$ is the set of poles of $(P -z\psi-i0)^{-1}$ and $D(z_j, g(h))$ is the open disc of radius $g(h)$ centred at $z_j\in \mathbb{C}$.
\end{lemma}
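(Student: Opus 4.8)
The plan is to reduce \eqref{eq:TZkey1} to a lower bound on the Fredholm determinant $\varphi(z):=\det(I+K(z))$ valid away from its zeros, and to get that lower bound from three facts already established in \S\ref{sec:poly_bound}: the uniform upper bound $\log|\varphi|\leq Ch^{-d}$ coming from \eqref{eq:Jensen1}, the single lower bound \eqref{eq:Jensen2}/\eqref{eq:Jensen2T} on $\log|\varphi(z_0)|$ at one fixed point $z_0$, and the polynomial pole count of Part (a) of Theorems \ref{thm:main} and \ref{thm:mainT}. The tool linking these is a Cartan-type minimum-modulus estimate.

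First I would fix $R_1>R_0$ with $\supp\chi\subset B(0,R_1)$, take $P_\theta$, $\widetilde P_\theta$, $W=\widetilde P_\theta-z\psi$ and $K(z)=W^{-1}iMQ$ exactly as in \S\ref{sec:poly_bound} (with a fixed $\theta\in(0,\pi/2)$ small enough that Lemma \ref{lem:W} and Corollary \ref{cor:psi} apply, and $M$ the large constant occurring in $\widetilde P_\theta$), and enlarge $\Omega$ to $\Omega'\Subset\mathbb{C}$ with $\overline\Omega\subset\Omega'$. Applying Lemma \ref{lem:W} with $\Omega'$ in place of $\Omega$, the factorization \eqref{eq:PWK} gives $(P_\theta-z\psi)^{-1}=(I+K(z))^{-1}W^{-1}$ on $\Omega'$, with $\|W^{-1}\|_{L^2\to L^2}\leq C$. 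By Lemma \ref{lem:agreement}, $\chi(P-z\psi-i0)^{-1}\chi=\chi(P_\theta-z\psi)^{-1}\chi$ whenever $z$ is not a pole of $(P_\theta-z\psi)^{-1}$; and the poles of $(P_\theta-z\psi)^{-1}$ in $\Omega'$---which, by \eqref{eq:PWK} and the discussion following it, are exactly the zeros of $\varphi$ in $\Omega'$---coincide with $\mathcal{P}\cap\Omega'$. (The inclusion in $\mathcal{P}$ is the standard fact that complex scaling creates no spurious poles, which follows from \eqref{eq:agreement} together with unique continuation for $P_\theta-z\psi$; the reverse inclusion follows by choosing in \eqref{eq:agreement} a cut-off identically $1$ near $B(0,R_0)$.) Hence, for $z\in\Omega\setminus\bigcup_{z_j\in\mathcal{P}}D(z_j,g(h))$---a set containing no zero of $\varphi$---
\[
\|\chi(P-z\psi-i0)^{-1}\chi\|_{L^2\to L^2}\leq C\,\|(I+K(z))^{-1}\|_{L^2\to L^2}\leq C\,\frac{\exp(\|K(z)\|_{\mathcal{L}_1})}{|\varphi(z)|},
\]
where the last step is the standard bound $\|(I+K)^{-1}\|\leq\det(I+|K|)/|\det(I+K)|\leq\exp(\|K\|_{\mathcal{L}_1})/|\det(I+K)|$ for trace-class $K$ (see \S\ref{app:B}). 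Since $\|K(z)\|_{\mathcal{L}_1}\leq Ch^{-d}$ on $\Omega'$ by \eqref{eq:boundQ}, \eqref{eq:boundK} and Lemma \ref{lem:W}, everything reduces to a lower bound on $|\varphi(z)|$ on $\Omega\setminus\bigcup_{z_j\in\mathcal{P}}D(z_j,g(h))$.

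The function $\varphi$ is holomorphic on $\Omega'$, and there: (i) $\log|\varphi|\leq Ch^{-d}$ by \eqref{eq:Jensen1}; (ii) $\varphi$ has at most $Ch^{-M}$ zeros by (the proof of) Part (a); and (iii) $\log|\varphi(z_0)|\geq -Ch^{-M}$ at a fixed $z_0\in\Omega$---namely $z_0=ih^{1+\epsilon}$ under the hypotheses of Theorem \ref{thm:main} (via Corollary \ref{cor:psi}(i), exactly as in the derivation of \eqref{eq:Jensen2}), and $z_0=ic$ for a small fixed $c>0$ under those of Theorem \ref{thm:mainT} (via Corollary \ref{cor:psi}(ii), exactly as in the derivation of \eqref{eq:Jensen2T}); in each case $z_0\in\Omega$ once $h<h_0$, since $\Omega$ contains a neighbourhood of the origin. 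Choosing coordinates in which $\Omega'$ contains the unit disc, $\Omega$ lies in the disc of radius $1/4$, and $z_0$ lies in that smaller disc, items (i)--(iii) are exactly the hypotheses of the Cartan-type minimum-modulus estimate (see, e.g., the arguments in \cite{StVo:95, StVo:96, TaZw:98}), which yields, for every $r\in(0,1)$,
\[
\log|\varphi(z)|\geq -Ch^{-M}\big(1+\log(1/r)\big)\qquad\text{for all }z\in\Omega\setminus\bigcup_{z_j\in\mathcal{P}}D(z_j,r).
\]
Taking $r$ a fixed multiple of $g(h)$ (the change of coordinates only rescales radii by a constant) and relabelling constants gives $\log|\varphi(z)|\geq -Ch^{-M}\log(1/g(h))-Ch^{-M}$ on $\Omega\setminus\bigcup_{z_j\in\mathcal{P}}D(z_j,g(h))$. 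Combining this with the displayed bound above, and using $M>d$ (so $h^{-d}\leq h^{-M}$ for $0<h<h_0$) together with $0<g(h)<1$ to absorb the residual $h^{-M}$-terms into $C_1h^{-M}\log(1/g(h))$, yields \eqref{eq:TZkey1}.

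I expect two points to require genuine care. The first is the identification of the zeros of $\varphi$ in $\Omega'$ with $\mathcal{P}\cap\Omega'$: without the ``no spurious poles'' inclusion the claimed bound would not even be finite on the stated set, and establishing it rests on the unique-continuation property for $P_\theta-z\psi$, which is delicate at the jumps of $n$. The second is the minimum-modulus estimate itself: beyond the geometric bookkeeping (the nesting $z_0\in\Omega\Subset\Omega'$ and the reduction to the unit disc), one must verify that the growth bound (i) and the point bound (iii) carry the correct powers of $h^{-1}$---which is precisely what the polynomial pole bound of Part (a) and the $o(h)$ restriction on $\Im z$ in Corollary \ref{cor:psi}(i) (respectively, the estimate valid for all $\Im z>0$ in Corollary \ref{cor:psi}(ii)) provide. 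Everything else is bookkeeping with constants.
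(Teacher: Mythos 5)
Your argument is essentially the paper's own proof: reduce to the complex-scaled operator via Lemma \ref{lem:agreement}, use the factorization \eqref{eq:PWK} together with Lemma \ref{lem:W} and Theorem \ref{thm:trace} to bound $\|(I+K(z))^{-1}\|_{L^2\to L^2}$ by $\exp(Ch^{-d})/|\det(I+K(z))|$, and deduce the lower bound on the determinant away from its zeros from the upper bound \eqref{eq:Jensen1} and the anchor values \eqref{eq:Jensen2}/\eqref{eq:Jensen2T} at $z_0=ih^{1+\epsilon}$ (resp.\ $z_0$ on the positive imaginary axis) via a minimum-modulus estimate -- your Cartan-type lemma and the paper's citation of \cite[Equation D.1.13]{DyZw:19} are the same complex-analysis ingredient. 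The only divergence is your explicit identification of the zeros of $\det(I+K(z))$ in $\Omega'$ with $\mathcal{P}$, a step the paper does not carry out (its proof in effect excludes discs around the determinant's zeros, i.e.\ poles of the scaled operator) and whose justification via unique continuation you only sketch; it is needed solely to match the stated exceptional set literally and does not change the substance of the argument.
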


\begin{proof}
We follow the proof of \cite[Theorem 7.5]{DyZw:19}, noting that many steps are similar to those in \S\ref{sec:poly_bound}.
First, by Lemma \ref{lem:agreement}, $\chi (P-z\psi)^{-1}\chi = \chi(P_\theta-z\psi)^{-1}\chi$, from which 
\begin{equation}
\label{eq:thetaL2}
\|\chi (P - z\psi)^{-1}\chi \|_{L^2 \to L^2} \le \|(P_\theta-z\psi)^{-1}\|_{L^2\to L^2}.
\end{equation}
Thus it suffices to estimate the right-hand side. By \eqref{eq:PWK}, $P_\theta-z\psi = W(I+K(z))$, where 
\[W = P_\theta - z\psi - iMQ,\quad K(z) = W^{-1}(iMQ),\quad Q = \chi(hD)\chi(x)^2\chi(hD),\]
with $W$ uniformly invertible (by Lemma \ref{lem:W}) and $K(z)$ compact. Thus
\begin{equation}
\label{eq:thetatoK}
\|(P_\theta-z\psi)^{-1}\|_{L^2\to L^2} = \|(I+K(z))^{-1}W^{-1}\|_{L^2\to L^2}\le C\|(I+K(z))^{-1}\|_{L^2\to L^2}.
\end{equation}
Because $K(z)$ is trace class, by Part (iii) of Theorem \ref{thm:trace}, 
\begin{equation}
\label{eq:b521}
\|(I+K(z))^{-1}\|_{L^2\to L^2}\le |\det(I+K(z))|^{-1}\det(I+[K(z)^*K(z)]^{1/2}).
\end{equation}
Then, by Part (ii) of Theorem \ref{thm:trace} and \eqref{eq:square_root},
\[\log|\det(I+[K(z)^*K(z)]^{1/2})| \le \|[K(z)^*K(z)]^{1/2}\|_{\mathcal{L}_1} = \|K(z)\|_{\mathcal{L}_1}.\]
By \eqref{eq:boundK} and \eqref{eq:boundQ}, $\|K(z)\|_{\mathcal{L}_1}\le Ch^{-d}$; thus 
\begin{equation}
\label{eq:detK*K}
|\det(I+[K(z)^*K(z)]^{1/2})| \le \exp\big(Ch^{-d}\big).
\end{equation}
On the other hand, a consequence of Jensen's formula is that for any function $f$ holomorphic on a neighborhood of $\Omega$ and any $z_0\in\Omega$, that there exists $C$ such that
\beq\label{eq:missingterm}
\log|f(z)|-\log|f(z_0)| \ge -C\log\left(\frac{1}{\delta}\right)\left(\max_{z\in\Omega} \log |f(z)|  - \log |f(z_0)|\right)
\eeq
for all $z$ away from the zeros of $f$; see \cite[Equation D.1.13]{DyZw:19}.
\footnote{Note that \cite[Equation D.1.13]{DyZw:19} does not contain the $-\log|f(z_0)|$ on the left-hand side. To see why this term is necessary, observe that without it the right-hand side of \eqref{eq:missingterm} is invariant under multiplication of $f$ by a non-zero scalar, whereas the left-hand side is not.} (In principle, $C$ in \eqref{eq:missingterm} depends on $z_0$, but since $\Omega$ is compact one can choose $C$ depending only on $\Omega$.)
Applying this to $f(z) = \det(I+K(z))$, $\delta = g(h)$, and
either $z_0 = ih^{1+\epsilon}$ if the hypotheses of Theorem \ref{thm:main} hold
or $z_0 = i\smallp/2$ if the hypotheses of Theorem \ref{thm:mainT} hold, and recalling the bounds
\[\log|\det(I+K(z))|\le Ch^{-d},\quad \log|\det(I+K(z_0))|\ge -Ch^{-M}\]
from \eqref{eq:Jensen1} and \eqref{eq:Jensen2}/\eqref{eq:Jensen2T}, we obtain
\begin{align*}
\log|\det(I+K(z))|&\ge -C\log\left(\frac{1}{g(h)}\right)\left(Ch^{-d}+Ch^{-M}\right)-Ch^{-M}\\
&\ge -C'h^{-M}\log\left(\frac{1}{g(h)}\right),
\end{align*}
i.e.\
\begin{equation}
\label{eq:detK}
|\det(I+K(z))|^{-1} = \exp(-\log|\det(I+K(z))|) \le \exp\left(C'h^{-M}\log\left(\frac{1}{g(h)}\right)\right);
\end{equation}
the result follows by combining \eqref{eq:thetaL2}, \eqref{eq:thetatoK}, \eqref{eq:b521}, \eqref{eq:detK*K}, and \eqref{eq:detK}.
\end{proof}

We now prove Part (b) of Theorems \ref{thm:main} and \ref{thm:mainT}. This proof is similar to the proof of \cite[Theorem 3.3]{LSW1} (the proof that the resolvent is polynomially bounded for ``most'' $k\in [k_0,\infty)$), but is simpler because here we work with $z$ in a bounded interval, whereas \cite[Theorem 3.3]{LSW1} works with $k$ in the unbounded interval $[k_0,\infty)$. We give the proof for Part (b) of Theorem \ref{thm:main}, and outline the (small) changes needed for Part (b) of Theorem \ref{thm:mainT} at the end.

We will apply the semiclassical maximum principle (Theorem \ref{thm:scmp}) to sufficiently many rectangles of the form 
\beq\label{eq:rectangle}
\big(w-2a(h),w+2a(h)\big)+i\big(-\delta(h)h^{-L},\delta(h)\big).
\eeq
By \eqref{eq:restrict1}, we need that 
\beq\label{eq:delta_cond}
a(h)^{2}\geq Ch^{-3L}\delta(h)^{2}, \quad\text{ i.e., }\quad \delta(h)\le C'h^{3L/2}a(h),
\eeq
and this implies that  $\delta(h)\ll \delta(h)h^{-L}\ll a(h)$ as $h\to 0$.

With $\smallpt$ given by Lemma \ref{lem:psi}, we choose $\smallp$ to be slightly smaller, say, $\smallp:= 9\smallpt/10$.
The reason for this is that we will apply the semiclassical maximum principle to rectangles of the form \eqref{eq:rectangle} (with $a(h)\to 0$) for, in principle, arbitrary $w\in (-\smallp,\smallp)$, and we need to ensure that 
$(w-2a(h),w+2a(h))\subset (-\smallpt,\smallpt)$ so that the resolvent estimate of Lemma \ref{lem:psi} holds for all $z\in (w-2a(h),w+2a(h))$.

Let $\mathcal{P}$ denote the set of poles in $\{ z:
\abs{z}<\smallpt\}$, and let $N(h)$ be their number. 
From Part (a) of Theorem \ref{thm:main}, 
we know that
\beq\label{eq:plane1}
N(h)\le Ch^{-M}
\eeq
where $M:= d+1+ \epsilon$. Let
\[E = \bigcup_{p\in\mathcal{P}} B(p,4a(h)) = \{w\,:\,\text{dist}(w,\mathcal{P})<4a(h)\}.\]
Given $w\in (-\smallp,\smallp)\setminus  E$, let
\beq\label{eq:Omega_w}
\Omega_w = (w-2a(h),w+2a(h)) + i(-\delta(h)h^{-L},\delta(h)),
\eeq
and observe that, for all $z\in \Omega_w$, $\text{dist}(z,\mathcal{P})>a(h)$ (since $\text{dist}(z,w)\le 2a(h)+\delta(h)h^{-L} < 3a(h)$
 for $h\ll 1$, and $\text{dist}(w,\mathcal{P})\ge 4a(h)$). Therefore,
 for $w\in (-\smallp,\smallp)\setminus  E$, the result \eqref{eq:scmp1} of the semiclassical maximum principle gives a good
resolvent bound on the interval $[w-a(h),w+a(h)]$; in particular, a good resolvent bound at $w$
(see \eqref{Xmaseve:1} below).  
Before stating this resolvent bound, we need to restrict $a(h)$ so that the measure of the set $(-\smallp,\smallp)\cap E$ is $\leq \widetilde{\delta} h^N$ (to prevent a notational clash with $\delta(h)$ used in the semiclassical maximum principle, we relabel $\delta$ in Theorems \ref{thm:main} and \ref{thm:mainT} as $\widetilde{\delta}$ here).

By the definition of $E$,
$$
(-\smallp,\smallp)\cap E = \bigcup_{p\in\mathcal{P}} (-\smallp,\smallp)\cap B(p,4a(h)) =: S_k,
$$
 and $|(-\smallp,\smallp)\cap B(p,4a(h))|\le 8a(h)$ regardless of $p$. Therefore
\[
|S_k|=
|(-\smallp,\smallp)\cap E| \le 8a(h)N(h)
\]
and so, using part (a) of Theorems~\ref{thm:main} and \ref{thm:mainT},
$|S_k| \leq \widetilde{\delta} h^N$
will be ensured by
\[a(h)\le C'\widetilde{\delta} h^{M+N}.\]
We therefore now choose $$a(h) = C'\widetilde{\delta} h^{M+N}.$$
The condition \eqref{eq:delta_cond} on $\delta(h)$ then reduces to
\beq\label{eq:delta_cond2}
\delta(h)
\le C''\widetilde\delta h^{3L/2+M +N}.
\eeq
Having now established how big $a(h)$ and $\delta(h)$ can be in our application of Theorem \ref{thm:scmp}, we now determine the constant $L$ in \eqref{eq:expbound}.
Since $\text{dist}(z,\mathcal{P})>a(h)$ for  $z\in \Omega_w$, Lemma \ref{thm:TZkey} implies that,
on the bottom edge of the rectangle $\Omega_w$ \eqref{eq:Omega_w},
\begin{align*}
\|Q(z,h)\|&\le C_1\exp\left(C_2 h^{-M}\log\left(\frac{1}{a(h)}\right)\right),\\
&\le C_1\exp\left(C_3 h^{-M}\left((M+N)\log(h^{-1})\right)\right).
\end{align*}
Thus, given $\epsilon'>0$, there exists $C_4>0$ such that 
\[\|Q(z,h)\|\le C_1\exp\big(C_4 h^{-(M+\epsilon')}\big),\]
and we may therefore choose $L = M+\epsilon'$. Therefore, by \eqref{eq:delta_cond2}, we can set 
\[\delta(h) =  C''\widetilde\delta h^{5M/2+ N +3\epsilon'/2}.\]
Under the assumptions of Theorem \ref{thm:main}, 
on the upper edge of the rectangle $\Omega_w$ \eqref{eq:Omega_w},
$\|Q(z,h)\|\le C\delta(h)^{-1}$ by \eqref{eq:key}. Therefore, \eqref{eq:scmp1} implies that, 
for $z\in (-\smallp,\smallp)\setminus S_k$, 
where $|S_k|\leq \widetilde{\delta} h^N$, 
\begin{equation}\label{Xmaseve:1}\|Q(z,h)\|\le C\delta(h)^{-1} \le  C \widetilde\delta^{-1} h^{-5M/2 - N-3 \epsilon'/2},\end{equation}
which is \eqref{eq:resolvent}, recalling that in this case $M=d+1+\epsilon$ and absorbing $3 \epsilon'/2$ into $\epsilon$ (since both $\epsilon$ and $\epsilon'$ were arbitrary).

The changes to the above proof for Part (b) of Theorem \ref{thm:mainT} are the following.
\begin{itemize}
\item Since there is no restriction on the real parts of $z$ in Lemma \ref{lem:psiT}, given $\smallp>0$ we choose $\smallpt> \smallp$ (say, $\smallpt:= 10\smallp/9$). 
\item Now $M:= d+3$ in \eqref{eq:plane1}. 
\item When applying the semiclassical maximum principle, on the upper edge of the rectangle $\Omega_w$ there is an additional $h^{-2}$ (compare \eqref{eq:key} to \eqref{eq:keyT} and recall that $\delta(h)\ll 1$ so the $1+ |z|^2$ on the right-hand side of \eqref{eq:keyT} is effectively $1$).
This additional factor of $h^{-2}$, along with the new definition of $M$, leads to 
$5(d+1)/2$ in the exponent of the bound \eqref{eq:resolvent} changing to $2 + 5(d+3)/2$ in \eqref{eq:resolventT}.
\end{itemize}

\begin{appendix}

\section{Recap of relevant results from semiclassical analysis}\label{app:A}

\subsection{Weighted Sobolev spaces}\label{sec:SC1}

The \emph{semiclassical Fourier transform} is defined by 
$$
(\mathcal F_{h}u)(\xi) := \int_{\mathbb R^d} \exp\big( -i x \cdot \xi/h\big)
u(x) \, d x,
$$
with inverse
\beqs
(\mathcal F^{-1}_{h}u)(x) := (2\pi h)^{-d} \int_{\mathbb R^d} \exp\big( i x \cdot \xi/h\big)
u(\xi)\, d \xi;
\eeqs
see \cite[\S3.3]{Zw:12}; i.e., the semiclassical Fourier transform is just the usual Fourier transform with the transform variable scaled by $h$. 
These definitions imply that, with $D:= -i \partial$,
\beq\label{eq:FTelement}
\cF_h \big( (h D)^\alpha) u\big) = \xi^\alpha \cF_h u \quad \tand\quad \norm{u}_{L^2(\Rea^d)} = \frac{1}{(2\pi h)^{d/2}}\norm{\cF_h u}_{L^2(\Rea^d)}; 
\eeq
see, e.g., \cite[Theorem 3.8]{Zw:12}.
Let 
\beq\label{eq:Hsk}
H_h^s(\Rea^d):= \Big\{ u\in \mathcal{S}'(\Rea^d) \,\text{ such that }\, \langle \xi\rangle^s (\cF_h u) \in L^2(\Rea^d) \Big\},
\eeq
where $\langle \xi \rangle := (1+|\xi|^2)^{1/2}$, $\mathcal{S}(\Rea^d)$ is the Schwartz space (see, e.g., \cite[Page 72]{Mc:00}), and $\mathcal{S}'(\Rea^d)$ its dual.
Define the norm
\beq\label{eq:Hhnorm}
\Vert u \Vert_{H_h^m(\Rea^d)} ^2 = \frac{1}{(2\pi h)^{d}} \int_{\Rea^d} \langle \xi \rangle^{2m}
 |\mathcal F_h u(\xi)|^2 \, d \xi.
\eeq
The properties \eqref{eq:FTelement} imply that the space $H_h^s$ is the standard Sobolev space $H^s$ with each derivative in the norm weighted by $h$.

\subsection{Semiclassical pseudodifferential operators}

A symbol is a function on $T^*\Rea^d$ that is also allowed to depend on $h$, and can thus be considered as an $h$-dependent family of functions.
Such a family $a=(a_h)_{0<h\leq h_0}$, with $a_h \in C^\infty({T^*\mathbb R^d})$, 
is a \emph{symbol
of order $m$}, written as $a\in S^m(T^*\Rea^d)$,
if for any multiindices $\alpha, \beta$
\beqs
| \partial_x^\alpha \partial^\beta_\xi a_h(x,\xi) | \leq C_{\alpha, \beta} 
\langle \xi\rangle^{m -|\beta|}
\quad\tfa (x,\xi) \in T^* \Rea^d \text{ and for all } 0<h\leq h_0,
\eeqs
and 
$C_{\alpha, \beta}$ does not depend on $h$; see \cite[p.~207]{Zw:12}, \cite[\S E.1.2]{DyZw:19}. 

We now fix $\chi_0\in C_c^\infty(\mathbb{R})$ to be identically 1 near 0.  
We then say that an operator $A:C_{\rm comp}^\infty(\mathbb{R}^d)\to \mathcal{D}'(\mathbb{R}^d)$ is a semiclassical pseudodifferential operator of order $m$, and write $A\in \Psi^m_\hsc(\mathbb{R}^d)$, if $A$ can be written as
\begin{equation}
\label{e:basicPseudo}
Au(x)=\frac{1}{(2\pi h)^d}\int_{\Rea^d}\int_{\Rea^d} e^{\frac{i}{h}\langle x-y,\xi\rangle}a(x,\xi)\chi_0(|x-y|)u(y)dyd\xi +E
\end{equation}
where $a\in S^m(T^*\mathbb{R}^d)$ and $E=O(h^\infty)_{\Psi^{-\infty}}$, where an operator $E=O(h^\infty)_{\Psi^{-\infty}}$ if for all $N>0$ there exists $C_N>0$ such that
$$
\|E\|_{H_h^{-N}(\mathbb{R}^d)\to H_h^N(\mathbb{R}^d)}\leq C_Nh^N. 
$$
 We use the notation $a(x,hD_x)$ for the operator $A$ in~\eqref{e:basicPseudo} with $E=0$. The integral in \eqref{e:basicPseudo} need not converge, and can be understood \emph{either} as an oscillatory integral in the sense of \cite[\S3.6]{Zw:12}, \cite[\S7.8]{Ho:83}, \emph{or} as an iterated integral, with the $y$ integration performed first; see \cite[Page 543]{DyZw:19}.

We use the notation $a \in h^l S^m$  if $h^{-l} a \in S^m$; similarly 
$A \in h^l \Psi_h^m$ if 
$h^{-l}A \in \Psi_h^m$.

\subsection{The principal symbol map $\sigma_{\hsc}$}
Let the quotient space $ S^m/\hsc S^{m-1}$ be defined by identifying elements 
of  $S^m$ that differ only by an element of $\hsc S^{m-1}$. 
For any $m$, there is a linear, surjective map
$$
\sigma^m_{\hsc}:\Psi_\hsc ^m \to S^m/\hsc S^{m-1},
$$
called the \emph{principal symbol map}, 
such that, for $a\in S^m$,
\beq\label{eq:symbolone}
\sigma_\hsc^m\big(\Op_\hsc(a)\big) = a \quad\text{ mod } \hsc S^{m-1};
\eeq
see \cite[Page 213]{Zw:12}, \cite[Proposition E.14]{DyZw:19} (observe that \eqref{eq:symbolone} implies that 
$\operatorname{ker}(\sigma^m_{\hsc}) = \hsc\Psi_\hsc ^{m-1}$).
When applying the map $\sigma^m_{\hsc}$ to 
elements of $\Psi^m_\hsc$, we denote it by $\sigma_{\hsc}$ (i.e.~we omit the $m$ dependence) and we use $\sigma_{\hsc}(A)$ to denote one of the representatives
in $S^m$ (with the results we use then independent of the choice of representative).

\begin{lemma}(\cite[Propositions E.19 and E.24]{DyZw:19}~\cite[Theorem 13.13 ]{Zw:12})\label{lem:norm_symbol}
If $A=a(x,hD)\in \Psi^0_\hsc$, then there exists $C>0$ such that 
$$
\|A\|_{L^2\to L^2}\leq \sup_{(x,\xi)\in T^*\Rea^d} |a(x,\xi)|+ Ch.
$$
\end{lemma}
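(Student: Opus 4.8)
The plan is to reduce the operator norm to a positivity statement and then exploit the symbol calculus. First I would write $\|A\|_{L^2\to L^2}^2 = \|A^*A\|_{L^2\to L^2}$ and observe that, by the composition and adjoint rules for $\Psi^0_h$, the operator $A^*A$ is self-adjoint, non-negative, and has principal symbol $|a|^2$. Writing $M:=\sup_{T^*\Rea^d}|a|$, for a parameter $\epsilon>0$ the function $q_\epsilon:=M^2+\epsilon-|a|^2$ satisfies $\epsilon\le q_\epsilon\le M^2+\epsilon$ everywhere and belongs to $S^0$; since it is bounded below by $\epsilon>0$, the square root $b_\epsilon:=q_\epsilon^{1/2}$ is again in $S^0$ (with seminorms depending on $\epsilon$).

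The key step is then to construct, via the symbol calculus, a self-adjoint operator $B_\epsilon\in\Psi^0_h$ with $\sigma_h(B_\epsilon)=b_\epsilon$ and
\[
B_\epsilon^*B_\epsilon = (M^2+\epsilon)I - A^*A + hR_\epsilon,\qquad R_\epsilon\in\Psi^0_h,
\]
the remainder $R_\epsilon$ being controlled on $L^2$ by the (non-sharp) $L^2$-boundedness of order-zero operators, uniformly in $h$ for each fixed $\epsilon$. Pairing with $u\in L^2$ and discarding $\|B_\epsilon u\|^2\ge 0$ gives
\[
\|Au\|^2 = (M^2+\epsilon)\|u\|^2 - \|B_\epsilon u\|^2 + h\langle R_\epsilon u,u\rangle \le \big(M^2+\epsilon+C_\epsilon h\big)\|u\|^2,
\]
so that $\|A\|_{L^2\to L^2}^2\le M^2+\epsilon+C_\epsilon h$ for every $\epsilon>0$. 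Letting $\epsilon\to 0$ at a suitable $h$-dependent rate then yields the stated bound $\|A\|_{L^2\to L^2}\le \sup|a|+Ch$.

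The main obstacle is the last step: the seminorms of $b_\epsilon$ (hence the constant $C_\epsilon$) degenerate as $\epsilon\downarrow 0$, because $b_0=(M^2-|a|^2)^{1/2}$ fails to be smooth where $|a|=M$, so the naive optimization over $\epsilon$ loses a power of $h$. Resolving this requires carrying out the square-root (equivalently, elliptic parametrix) construction directly in a symbol class with uniform-in-$\epsilon$ estimates, together with a careful accounting of the error terms; this is exactly what is done in \cite[Theorem 13.13]{Zw:12} and \cite[Propositions E.19 and E.24]{DyZw:19}, which I would cite for the sharp $O(h)$ remainder. (An alternative route, giving the clean bound when $\sup|a|$ is bounded away from $0$, is to note that for $\lambda>M^2$ the operator $A^*A-\lambda$ is semiclassically elliptic with symbol bounded away from zero, hence invertible for $h$ small by the parametrix construction whose error is $O(h)$ in operator norm; quantifying the admissible threshold shows $\mathrm{spec}(A^*A)\subset(-\infty,M^2+Ch]$.)
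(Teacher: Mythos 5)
The paper offers no proof of this lemma at all—it is quoted as a standard result with precisely the citations you end up relying on (\cite[Theorem 13.13]{Zw:12}, \cite[Propositions E.19, E.24]{DyZw:19})—so your proposal, which sketches the classical square-root/G\aa rding-type argument and then defers the sharp remainder to those same references, is in effect the same approach and is acceptable. You are also right to flag that the self-contained version only yields $\sup|a|+O(h^{1/2})$ after optimizing in $\epsilon$ (and the elliptic-parametrix alternative degenerates in the same way as $\lambda\downarrow \sup|a|^2$), so the stated $O(h)$ remainder genuinely needs the refined argument of the cited results; note, though, that in the only place the paper uses this lemma (Step 2 of the proof of Lemma \ref{lem:W}) the weaker $O(h^{1/2})$ bound would serve just as well.
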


\subsection{Ellipticity}

To deal with the behavior of
functions on phase space uniformly near $\xi=\infty$ (so-called \emph{fiber infinity}), we consider the \emph{radial
  compactification} in the $\xi$ variable of $T^*\Rea^d$. This is defined by
$$
\overline{T} ^* \mathbb R^d:= \mathbb R^d \times B^d,
$$
where $B^d$ denotes the closed unit ball, considered as the closure of the
image of $\mathbb R^d$ under the radial compactification map 
$$\RC: \xi \mapsto \xi/(1+\langle \xi
\rangle);$$
see \cite[\S E.1.3]{DyZw:19}.
Near the boundary of the
ball, $\lvert \xi\rvert^{-1}\circ \RC^{-1}$ is a smooth function, vanishing to
first order at the boundary, with $(\lvert \xi\rvert^{-1}\circ \RC^{-1}, \widehat\xi\circ\RC^{-1})$
thus giving local coordinates on the ball near its boundary.  The boundary of the
ball should be considered as a sphere at infinity consisting of all
possible \emph{directions} of the momentum variable.  

We now give a simplified version of semiclassical elliptic regularity;
for the proof of this, as well as a statement and proof of the more-general version, see, e.g., \cite[Theorem E.33]{DyZw:19}.
For this, we say that $B\in \Psi^{m}_{\hsc}$ is \emph{elliptic} on $\overline{T^*\mathbb{R}^d}$ if 
\beqs
\liminf_{\hsc\to 0}\inf_{(x,\xi)\in  \overline{T^*\mathbb{R}^d}}\big|\sigma_\hsc(B)(x,\xi)\langle \xi\rangle^{-m}\big|>0.
\eeqs
\begin{theorem}[Simplified semiclassical elliptic regularity]\label{thm:elliptic}
If $B\in \Psi^{m}_{\hsc}(\Rea^d)$ is elliptic on $\overline{T^*\mathbb{R}^d}$ 
then there exists $\hsc_0>0$ such that, for all $0<\hsc\leq \hsc_0$, $B^{-1} : H_\hsc^{s-m}(\Rea^d) \rightarrow H_\hsc^{s}(\Rea^d)$ exists and is bounded (with norm independent of $\hsc$) for all $s$.
\end{theorem}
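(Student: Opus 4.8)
The statement to prove is Theorem~\ref{thm:elliptic} (Simplified semiclassical elliptic regularity): if $B\in\Psi^m_h$ is elliptic on $\overline{T^*\mathbb{R}^d}$, then for $h$ small $B^{-1}:H_h^{s-m}\to H_h^s$ exists and is bounded uniformly in $h$.

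\medskip

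\noindent\textbf{Proof proposal.}

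The plan is to construct an approximate inverse (parametrix) for $B$ using the symbol calculus, and then upgrade it to a genuine inverse by a Neumann series once $h$ is small enough. First I would normalize the order: since $\langle\xi\rangle^s$ and $\langle\xi\rangle^{s-m}$ give isometric identifications (up to $h$-independent constants) of $H_h^s$ and $H_h^{s-m}$ with $L^2$, and since conjugating $B$ by powers of $\langle hD\rangle$ (which are elliptic of the appropriate orders in $\Psi_h$) preserves ellipticity and changes the order in the expected way, it suffices to treat the case $m=0$ and prove $B^{-1}:L^2\to L^2$ exists and is bounded uniformly in $h$. Concretely, write $\widetilde B = \langle hD\rangle^{s-m}\,B\,\langle hD\rangle^{-s}$; this lies in $\Psi_h^0$, is elliptic on $\overline{T^*\mathbb{R}^d}$ (its principal symbol is $\langle\xi\rangle^{-m}\sigma_h(B)$, bounded away from $0$), and $B^{-1}:H_h^{s-m}\to H_h^s$ is bounded iff $\widetilde B^{-1}:L^2\to L^2$ is bounded, with comparable norms.

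Now for $B\in\Psi_h^0$ elliptic: by hypothesis there is $c>0$ and $h_0>0$ so that $|\sigma_h(B)(x,\xi)|\geq c$ for all $(x,\xi)$ and all $0<h\leq h_0$. Hence $1/\sigma_h(B)\in S^0$ (one checks the derivative bounds by the quotient rule, using that the derivatives of $\sigma_h(B)$ lie in $S^0$ and the denominator is bounded below). Set $E_0 := \mathrm{Op}_h(1/\sigma_h(B))\in\Psi_h^0$. By the composition rule for semiclassical pseudodifferential operators and the principal symbol property \eqref{eq:symbolone}, $E_0 B = I + h R_1$ and $B E_0 = I + h R_2$ with $R_1, R_2\in\Psi_h^0$, hence $\|hR_i\|_{L^2\to L^2}\leq Ch$ by Lemma~\ref{lem:norm_symbol} (more precisely, $\|R_i\|_{L^2\to L^2}$ is uniformly bounded, so $\|hR_i\|\le Ch \le 1/2$ for $h$ small). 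Therefore $I + hR_1$ is invertible on $L^2$ with inverse $\sum_{j\geq 0}(-hR_1)^j$ of norm $\leq 2$, and similarly for $I+hR_2$. It follows that $B$ has a left inverse $(I+hR_1)^{-1}E_0$ and a right inverse $E_0(I+hR_2)^{-1}$, both bounded on $L^2$ uniformly in $h$; a bounded operator with a bounded left inverse and a bounded right inverse is invertible, with $B^{-1}=(I+hR_1)^{-1}E_0=E_0(I+hR_2)^{-1}$ and $\|B^{-1}\|_{L^2\to L^2}\leq 2\|E_0\|_{L^2\to L^2}\leq 2(c^{-1}+Ch)$, uniformly in $h\leq h_0$. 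Undoing the conjugation gives the claim for general $s,m$.

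The main obstacle — really the only nonroutine point — is making sure all the implied constants (in the symbol estimates for $1/\sigma_h(B)$, in the composition remainders, and in the $L^2$ boundedness via Lemma~\ref{lem:norm_symbol}) are genuinely $h$-independent; this is where the uniform-in-$h$ lower bound in the definition of ellipticity (the $\liminf_{h\to 0}\inf$) is used, together with the fact that the symbol class $S^m$ is defined with $h$-uniform seminorm bounds. One also needs a small amount of care at fiber infinity: ellipticity is assumed on the radial compactification $\overline{T^*\mathbb{R}^d}$, so $\sigma_h(B)$ is bounded below even as $|\xi|\to\infty$ (for $m=0$), which is exactly what makes $1/\sigma_h(B)$ a bona fide element of $S^0$ rather than merely smooth. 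Everything else is a direct application of the calculus recalled in this appendix; a reference such as \cite[Theorem~E.33]{DyZw:19} covers the general statement and its proof, of which the above is the simplified version.
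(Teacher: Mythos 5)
Your parametrix-plus-Neumann-series argument is correct, and the reduction to order zero via conjugation by $\langle hD\rangle$ powers is handled properly, with the $h$-uniformity coming exactly from the $\liminf$-$\inf$ ellipticity condition and the $h$-uniform symbol seminorms as you say. The paper itself gives no proof but defers to \cite[Theorem E.33]{DyZw:19}, whose argument is essentially the one you wrote, so your proposal matches the intended route.
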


\subsection{Defect measures}

We say that $a\in S^{\rm comp}$ if $a\in S^{-\infty}$ and $a$ is compactly supported, and we say that $A\in \Psi^{\rm comp}_\hsc$ if $A\in \Psi_\hsc^{-\infty}$ and can be written in the form~\eqref{e:basicPseudo} with $a\in S^{\rm comp}$. 

\begin{definition}[Defect measure]
Given $\{u(\hsc)\}_{0<\hsc\leq\hsc_0}$, uniformly locally bounded, and a sequence $\hsc_n\to 0$, $\{u(\hsc)\}_{0<\hsc\leq\hsc_0}$ has defect measure $\mu$ if, for all $a\in S^{\comp}$,
\beq\label{eq:defect}
\lim_{n\to \infty}\big\langle \Op(a) u(\hsc_n),u(\hsc_n) \big\rangle= \int_{T^*\Rea^d} a\, \rd \mu.
\eeq
\end{definition}
Observe that \eqref{eq:defect} implies that if $A$ is the quantisation of a symbol $a\in S^{\comp}$, then 
\beqs
\lim_{n\to \infty}\big\langle A u(\hsc_n),u(\hsc_n) \big\rangle= \int_{T^*\Rea^d} \sigma_\hsc(A)\, \rd \mu;
\eeqs
indeed, this follows since $A- \Op(\sigma_\hsc(A)) \in \hsc \Psi^{-\infty}_\hsc$, by the definition of the principal symbol and the fact that $A\in \Psi^{-\infty}_\hsc$.

\begin{theorem}[Existence of defect measures]
\label{thm:defect_existence}
Suppose that $\{u(\hsc)\}_{0<\hsc\leq\hsc_0}$ is uniformly locally bounded and $\hsc_n\to 0$.
Then there exists a subsequence $\{\hsc_{n_\ell}\}_{\ell=1}^\infty$ and a Radon measure $\mu$ on $T^*\Rea^d$ such that 
 $\{u(\hsc_{n_\ell})\}_{\ell=1}^\infty$ has defect measure $\mu$.
\end{theorem}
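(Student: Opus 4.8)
The plan is to recover $\mu$ from the uniformly bounded family of linear functionals $a\mapsto\langle\Op_\hsc(a)u(\hsc),u(\hsc)\rangle$ on $C^\infty_{\rm comp}(T^*\mathbb{R}^d)$ (these being exactly the $\hsc$-independent elements of $S^{\comp}$), to extract a convergent subsequence by a diagonalization argument, and then to invoke the Riesz--Markov representation theorem, the positivity of the limiting functional being what makes the limit a measure. To set up the \emph{uniform bound}, fix a compact exhaustion $K_1\subset K_2\subset\cdots$ of $T^*\mathbb{R}^d$. For $a\in C^\infty_{\rm comp}(T^*\mathbb{R}^d)$ with $\supp a\subset K_m$, the Schwartz kernel of $\Op_\hsc(a)=a(x,\hsc D)$ is supported in $\{(x,y): x\in\pi_x(K_m),\ |x-y|\leq C_0\}$, where $C_0$ is the fixed size of the support of the cutoff $\chi_0$ and $\pi_x(K_m)$ is the (compact) projection of $K_m$ to $\mathbb{R}^d$; hence, if $\chi_m\in C^\infty_{\rm comp}(\mathbb{R}^d)$ is identically one on the $C_0$-neighbourhood of $\pi_x(K_m)$, then
\beqs
\langle\Op_\hsc(a)u(\hsc),u(\hsc)\rangle=\langle\Op_\hsc(a)(\chi_m u(\hsc)),\chi_m u(\hsc)\rangle.
\eeqs
Since $\{u(\hsc)\}$ is uniformly locally bounded, $\|\chi_m u(\hsc)\|_{L^2}\leq C_m$ uniformly in $\hsc$, while Lemma \ref{lem:norm_symbol} gives $\|\Op_\hsc(a)\|_{L^2\to L^2}\leq\sup|a|+C\hsc$; writing $\ell_\hsc(a):=\langle\Op_\hsc(a)u(\hsc),u(\hsc)\rangle$, we obtain
\beqs
|\ell_\hsc(a)|\leq(\sup|a|+C\hsc)\,C_m^2\qquad\text{whenever }\supp a\subset K_m,\ 0<\hsc\leq\hsc_0.
\eeqs

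For the \emph{subsequence extraction}: the space $C^\infty_{\rm comp}(T^*\mathbb{R}^d)$ is separable, so choose a countable family $\{a_j\}$ that is $\sup$-norm dense among the smooth functions supported in each $K_m$. By the bound above and a diagonal argument over $j$, from $\hsc_n\to0$ extract a subsequence $\hsc_{n_\ell}$ along which $\ell_{\hsc_{n_\ell}}(a_j)$ converges for every $j$. An $\epsilon/3$ argument, using the equicontinuity of the $\ell_{\hsc}$ in the $\sup$-norm on each $K_m$, then shows that $L(a):=\lim_{\ell\to\infty}\ell_{\hsc_{n_\ell}}(a)$ exists for every $a\in C^\infty_{\rm comp}(T^*\mathbb{R}^d)$, is linear, and satisfies $|L(a)|\leq C_m^2\sup|a|$ for $\supp a\subset K_m$; thus $L$ extends uniquely to a bounded linear functional on $C_{\rm comp}(T^*\mathbb{R}^d)$.

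Finally, for \emph{positivity and representation}: for real-valued $a$ the adjoint formula gives $\|\Op_\hsc(a)^*-\Op_\hsc(a)\|_{L^2\to L^2}=O(\hsc)$, so $\Im\ell_\hsc(a)=O(\hsc)\,C_m^2\to0$ and $L(a)\in\mathbb{R}$; for $a\geq0$ the sharp G\aa rding inequality applied to $\chi_m u(\hsc)$ (or, more elementarily, approximating $a$ from below by symbols of the form $\Op_\hsc(b)^*\Op_\hsc(b)$ and using the composition calculus) gives $\ell_\hsc(a)\geq-C\hsc\,C_m^2$, hence $L(a)\geq0$. Therefore $L$ is a positive bounded linear functional on $C_{\rm comp}(T^*\mathbb{R}^d)$, and the Riesz--Markov--Kakutani theorem produces a (positive, locally finite, and hence Radon) measure $\mu$ on $T^*\mathbb{R}^d$ with $L(a)=\int a\,\rd\mu$ for all $a\in C_{\rm comp}$; in particular \eqref{eq:defect} holds for every $a\in C^\infty_{\rm comp}(T^*\mathbb{R}^d)$, which is the assertion. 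The routine ingredients are the diagonalization and the Riesz step; the one genuine input beyond Lemma \ref{lem:norm_symbol} is the sharp G\aa rding inequality used for positivity, and the only mild subtlety is that $u(\hsc)$ is assumed merely locally bounded, which is what forces the cutoffs $\chi_m$ to be carried through the argument.
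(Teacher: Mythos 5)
Your proposal is correct and coincides with the argument behind the paper's proof, which is given only by citation to \cite[Theorem 5.2]{Zw:12} and \cite[Theorem E.42]{DyZw:19}: a uniform bound via Lemma \ref{lem:norm_symbol} together with cutoffs exploiting the compact support of the kernel cutoff $\chi_0$ (which is exactly how the merely \emph{local} boundedness of $u(\hsc)$ is handled), a diagonal extraction over a countable dense family, positivity via sharp G\aa rding (or the square-root/composition trick), and the Riesz--Markov representation theorem. The only loose phrase is ``equicontinuity in the sup-norm''---the operator-norm bound carries an $O(\hsc)$ term depending on derivatives of the symbol, so the family is not literally equicontinuous at fixed $\hsc$---but since that term vanishes as $\hsc\to 0$ along the subsequence, the $\epsilon/3$ argument you describe goes through unchanged.
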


\begin{proof}[References for the proof]
See, e.g., \cite[Theorem 5.2]{Zw:12}, \cite[Theorem E.42]{DyZw:19}.
\end{proof}

\begin{theorem}[Support of defect measure]
\label{thm:characteristic} 
Let $P\in \Psi^{m}_\hsc$ be properly supported. Suppose that $\{u(\hsc_n)\}$ has defect measure $\mu$, and satisfies
$$
\N{P u(\hsc_n)}_{L^2(\Rea^d)}\to 0 \quad\text{ as } n\to\infty.
$$
Then $\mu(\{\sigma_\hsc(P)\neq 0\})=0$; i.e., if $\supp\, a\subset \{\sigma_\hsc(P)\neq 0\}$, then $\int a\, \rd \mu=0$.
\end{theorem}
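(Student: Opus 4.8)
The plan is the standard elliptic-parametrix argument. For a positive Radon measure $\mu$ the conclusion is equivalent to the displayed reformulation, so it suffices to show $\int a\,d\mu = 0$ for every $a\in S^{\rm comp}$ with $\supp a\subset\{\sigma_h(P)\neq 0\}$; at the very end I recover $\mu(\{\sigma_h(P)\neq 0\})=0$ by exhausting the open set $\{\sigma_h(P)\neq 0\}$ by compacts $K_j$ and choosing (smooth Urysohn) $a_j\in C^\infty_c(\{\sigma_h(P)\neq 0\})$ with $0\le a_j\le 1$ and $a_j\equiv 1$ on $K_j$, so that $\mu(K_j)\le\int a_j\,d\mu = 0$ and $\mu(\{\sigma_h(P)\neq 0\})=\lim_j\mu(K_j)=0$.

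So fix such an $a$ and set $A:=\Op_h(a)$. Since $\{\sigma_h(P)\neq 0\}$ is open and contains the compact set $\supp a$, pick $\chi\in S^{\rm comp}$ with $\chi\equiv 1$ on $\supp a$ and $\supp\chi\subset\{\sigma_h(P)\neq 0\}$, and put $b:=\chi a/\sigma_h(P)$, extended by $0$ off $\supp\chi$. Then $b\in S^{\rm comp}$: it is smooth on $\{\sigma_h(P)\neq 0\}$ (a quotient of smooth symbols with non-vanishing denominator) and $\equiv 0$ on $\mathbb{R}^{2d}\setminus\supp\chi$, and these two open sets cover $T^*\mathbb{R}^d$. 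Let $B:=\Op_h(b)\in\Psi^{\rm comp}_h$. By the semiclassical symbol calculus (see, e.g., \cite{Zw:12, DyZw:19}), composing the $\Psi^{\rm comp}_h$ operator $B$ with the properly supported $P\in\Psi^m_h$ yields $BP\in\Psi^{\rm comp}_h$ with $\sigma_h(BP)=b\,\sigma_h(P)=\chi a=a=\sigma_h(A)$; hence $A-BP$ has vanishing principal symbol, so $A-BP\in h\Psi^{\rm comp}_h$ modulo $O(h^\infty)_{\Psi^{-\infty}}$, and in particular $\|A-BP\|_{L^2\to L^2}=O(h)$, while the Schwartz kernels of $A$, $BP$, $A-BP$, $B$, $B^*$ are all compactly supported in $x$ and in $y$.

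Next I would compute, for the sequence $h_n\to 0$,
\[
\langle Au(h_n),u(h_n)\rangle = \langle Pu(h_n),\,B^*u(h_n)\rangle + \langle (A-BP)u(h_n),\,u(h_n)\rangle .
\]
In the first term, $B^*$ is bounded on $L^2$ (Lemma \ref{lem:norm_symbol}) with compactly supported kernel, so $\|B^*u(h_n)\|_{L^2}$ stays bounded by the uniform local boundedness of $\{u(h_n)\}$, and Cauchy--Schwarz together with the hypothesis $\|Pu(h_n)\|_{L^2}\to 0$ kills this term. In the second term, $A-BP$ has $L^2\to L^2$ norm $O(h_n)$ and kernel compactly supported in $y$, so $\|(A-BP)u(h_n)\|_{L^2}\le Ch_n\|u(h_n)\|_{L^2(K)}\to 0$ for a fixed compact $K$, while (the image of $A-BP$ being supported in a fixed compact set) $\|u(h_n)\|_{L^2}$ contributes only a bounded factor; hence this term also vanishes. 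Thus $\langle Au(h_n),u(h_n)\rangle\to 0$, which by the definition of the defect measure says exactly $\int a\,d\mu=0$.

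The only delicate point is the $h$-dependence of $\sigma_h(P)$: for $b$ and $B$ to be of class $S^{\rm comp}$, resp.\ $\Psi^{\rm comp}_h$, with $h$-uniform bounds one needs $|\sigma_h(P)|$ bounded below on $\supp\chi$ uniformly for small $h$ --- which is the natural reading of $\supp a\subset\{\sigma_h(P)\neq 0\}$ here and is automatic in our applications, where the principal symbol converges (cf.\ the remark on \cite[p.~388]{DyZw:19}; one may also simply pass to a subsequence). Granted this, the rest is routine symbol calculus and the uniform local boundedness hypothesis.
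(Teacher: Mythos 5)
Your argument is correct, and it is essentially the standard symbol-division (elliptic parametrix) proof given in the references the paper cites for this recap result (\cite[Theorem 5.3]{Zw:12}, \cite[Theorem E.43]{DyZw:19}): the paper itself supplies no proof beyond those citations. You also handle the two genuinely delicate points appropriately --- using compactly supported kernels so that only uniform \emph{local} boundedness of $u(\hsc_n)$ is needed, and flagging the uniform lower bound on $|\sigma_\hsc(P)|$ over $\supp\chi$ required when the symbol is $\hsc$-dependent, which is exactly the issue the paper defers to the remark on \cite[p.~388]{DyZw:19}.
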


\begin{proof}[References for the proof]
See \cite[Equation 3.17]{Bu:02}, \cite[Theorem 5.3]{Zw:12}, \cite[Theorem E.43]{DyZw:19}.
\end{proof}

\begin{theorem}[Propagation of the defect measure under the flow]
\label{thm:propagation}
Suppose that  $P\in \Psi^{m}_\hsc$ is properly supported and formally self adjoint; denote its (real valued) principal symbol by $p$. 
Suppose that $\{u(\hsc_n)\}$ has defect measure $\mu$, and 
\beqs
\N{Pu(\hsc_n)}_{L^2(\Rea^d)}=o(h_n) \quad\text{ as } n\to \infty.
\eeqs
Then
\beqs
\int 
H_p a 
 \,\rd \mu=0 \quad\tfa a \in S^{\rm comp}.
\eeqs
\end{theorem}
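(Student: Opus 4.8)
The plan is to pair the defect measure with the commutator $\frac{i}{h}[P,A]$ for a test operator $A=\Op_h(a)$ with $a\in S^{\rm comp}$, exploiting the fact that the hypothesis $\N{Pu(\hsc_n)}_{L^2}=o(h_n)$ is precisely strong enough to survive one division by $h_n$; this is the only place where the improvement from $\N{Pu(\hsc_n)}_{L^2}\to 0$ in Theorem \ref{thm:characteristic} to $o(h_n)$ here is used.

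\emph{Step 1 (the commutator pairing is $o(h_n)$).} Since $P$ is formally self-adjoint and $A,A^*\in\Psi^{\rm comp}_\hsc$ are bounded on $L^2$ uniformly in $h$ (Lemma \ref{lem:norm_symbol}),
$$
\big\langle [P,A]u(\hsc_n),u(\hsc_n)\big\rangle=\big\langle Au(\hsc_n),Pu(\hsc_n)\big\rangle-\big\langle Pu(\hsc_n),A^*u(\hsc_n)\big\rangle .
$$
Because $a$ is compactly supported, $Au(\hsc_n)$ and $A^*u(\hsc_n)$ depend on $u(\hsc_n)$ only through its restriction to a fixed compact set (up to $O(h^\infty)$ errors), so both are $O(1)$ in $L^2$ by the uniform local boundedness of $\{u(\hsc)\}$; combined with $\N{Pu(\hsc_n)}_{L^2}=o(h_n)$ this gives $\langle [P,A]u(\hsc_n),u(\hsc_n)\rangle=o(h_n)$.

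\emph{Step 2 (principal symbol of the commutator).} By the semiclassical symbol calculus (see, e.g., \cite[Chapter 4]{Zw:12}, \cite[Appendix E]{DyZw:19}), since $p=\sigma_\hsc(P)\in S^m$ is real-valued and $a\in S^{\rm comp}$, one has $\frac{i}{h}[P,A]=\Op_\hsc(H_pa)+hR$, where $H_pa=\{p,a\}$ is again in $S^{\rm comp}$ (it is compactly supported because $\partial_x a$ and $\partial_\xi a$ are) and $R\in\Psi^{\rm comp}_\hsc$ is bounded on $L^2$ uniformly in $h$. Dividing the conclusion of Step 1 by $h_n$ and using $h_n\,|\langle Ru(\hsc_n),u(\hsc_n)\rangle|=O(h_n)\to 0$ yields
$$
\big\langle \Op_\hsc(H_pa)u(\hsc_n),u(\hsc_n)\big\rangle\longrightarrow 0 .
$$
Since $H_pa\in S^{\rm comp}$, the defining relation \eqref{eq:defect} of the defect measure gives $\langle \Op_\hsc(H_pa)u(\hsc_n),u(\hsc_n)\rangle\to\int_{T^*\Rea^d}H_pa\,\rd\mu$, and comparing with the limit $0$ just obtained proves $\int H_pa\,\rd\mu=0$ for every $a\in S^{\rm comp}$.

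The computation is otherwise routine; the main point requiring care is the bookkeeping in Step 2 — verifying that the remainder in $\frac{i}{h}[P,A]=\Op_\hsc(H_pa)+O(h)_{\Psi^{\rm comp}_\hsc}$ is genuinely $O(h)$ in operator norm (not merely $O(1)$), which is exactly what makes $\frac{1}{h_n}\langle [P,A]u(\hsc_n),u(\hsc_n)\rangle\to 0$ rather than $O(1)$. The possibly-unbounded order $m$ of $P$ causes no difficulty, since the compact support of $a$ localizes every term, effectively replacing $u(\hsc_n)$ throughout by its restriction to a fixed compact set on which $\{u(\hsc)\}$ is uniformly bounded.
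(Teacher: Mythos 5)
Your argument is correct: it is precisely the standard commutator proof (pair with $\tfrac{i}{h}[P,A]$, use formal self-adjointness and $\|Pu(\hsc_n)\|_{L^2}=o(\hsc_n)$ to kill the pairing to order $o(\hsc_n)$, then use $\sigma_\hsc\bigl(\tfrac{i}{h}[P,A]\bigr)=H_pa$ with an $O(h)$ remainder and the defining property \eqref{eq:defect} of $\mu$). The paper itself gives no proof but only cites \cite[Theorem 5.2]{Zw:12} and \cite[Theorem E.44]{DyZw:19}, whose proofs are exactly this argument, so your proposal coincides with the paper's (referenced) approach.
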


\begin{proof}[References for the proof]
See  \cite[Theorem 5.2]{Zw:12}, \cite[Theorem  E.44]{DyZw:19}.
\end{proof}

Let $\varphi_t$ denote the flow along $H_p$; i.e., $\varphi_t(\cdot) := \exp({tH_{p}}\cdot)$.

\begin{corollary}[Invariance under the flow written in terms of sets]
\label{cor:invariance}
Under the assumptions of Theorem \ref{thm:propagation}, given a Borel set $B\subset T^*\Rea^d$, 
\beqs
\mu\big(\varphi_t(B)\big) = \mu(B) \quad\tfa t,
\qquad\text{ i.e., }\quad
\int 1_{\varphi_t(B)} \,\rd \mu = \int 1_B\, \rd \mu \quad\tfa t.
\eeqs
\end{corollary}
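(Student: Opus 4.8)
The plan is to promote the infinitesimal statement provided by Theorem \ref{thm:propagation}, namely $\int H_p a\, \rd\mu = 0$ for all $a\in S^{\comp}$, to the statement that $\mu$ is invariant under the Hamiltonian flow $\varphi_t$, from which the set-theoretic conclusion is immediate.

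First I would fix $a\in S^{\comp}$ --- since the elements of $S^{\comp}$ are exactly the compactly supported smooth functions, it suffices to take $a\in C^\infty_{\rm comp}(T^*\Rea^d)$ --- and consider $g(t):=\int a\circ\varphi_t\, \rd\mu$. Three facts drive the argument: (i) for each fixed $t$ the function $a\circ\varphi_t$ again lies in $C^\infty_{\rm comp}(T^*\Rea^d)\subset S^{\comp}$, since $\varphi_t$ is a diffeomorphism carrying compact sets to compact sets; (ii) the flow of a vector field is a one-parameter group and hence commutes with the vector field, so $\partial_t(a\circ\varphi_t) = (H_p a)\circ\varphi_t = H_p(a\circ\varphi_t)$; and (iii) for $t$ in any bounded interval $[-T,T]$ the supports of $a\circ\varphi_t$ all lie in the single compact set $\bigcup_{|t|\le T}\varphi_{-t}(\supp a)$, on which $\mu$ is finite (it is Radon) and on which $\partial_t(a\circ\varphi_t) = (H_p a)\circ\varphi_t$ is bounded by $\|H_p a\|_{\infty}$, so one may differentiate $g$ under the integral sign. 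Combining (ii) and (iii) gives $g'(t) = \int H_p(a\circ\varphi_t)\,\rd\mu$, and this vanishes by Theorem \ref{thm:propagation} applied to the test function $a\circ\varphi_t$, which is legitimate by (i). Hence $g$ is constant, i.e.\ $\int a\circ\varphi_t\,\rd\mu = \int a\,\rd\mu$ for every $a\in C^\infty_{\rm comp}(T^*\Rea^d)$ and every $t$.

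Next I would observe that this is precisely the statement $(\varphi_t)_*\mu = \mu$ for all $t$: two positive Radon measures on $T^*\Rea^d$ that integrate every function in $C^\infty_{\rm comp}$ --- hence, by density, every continuous compactly supported function --- to the same value must coincide, by the Riesz representation theorem. Applying this identity with $-t$ in place of $t$, and using that $\varphi_t$ is a homeomorphism (so $\varphi_t(B)$ is Borel whenever $B$ is, and $\varphi_t(B) = (\varphi_{-t})^{-1}(B)$), we obtain $\mu(\varphi_t(B)) = \big((\varphi_{-t})_*\mu\big)(B) = \mu(B)$ for every Borel $B$ and every $t$; the reformulation in terms of indicator functions is then just the definition of $\mu(E)=\int 1_E\,\rd\mu$.

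I expect the only step requiring genuine care to be the completeness/localisation point lurking behind (i) and (iii): the statement ``for all $t$'' presupposes that $\varphi_t = \exp(tH_p)$ is globally defined, and one must know that $a\circ\varphi_t$ stays in $S^{\comp}$ with supports confined to a fixed compact set as $t$ ranges over a bounded interval. In the settings where this corollary is used (with $p = |\xi|^2 - n$, or its perturbations $p' = |\xi|^2 - n - z_0\psi$, and $n-1$, $\psi$ compactly supported), conservation of $p$ together with boundedness of $n$ (and of $z_0\psi$) forces $|\xi|$, and hence $|\dot x| = 2|\xi|$, to remain bounded along each bicharacteristic, so the flow is complete and the required uniform compactness on bounded $t$-intervals holds; alternatively one simply runs the argument for $|t|\le T$ and lets $T\to\infty$.
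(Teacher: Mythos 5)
Your proof is correct: the paper states Corollary \ref{cor:invariance} without proof, as the standard consequence of Theorem \ref{thm:propagation}, and your argument --- differentiating $t\mapsto\int a\circ\varphi_t\,\rd\mu$ under the integral, using that $a\circ\varphi_t\in C^\infty_{\rm comp}\subset S^{\comp}$ and that the flow commutes with $H_p$, invoking Theorem \ref{thm:propagation} to get constancy, and then upgrading to $(\varphi_t)_*\mu=\mu$ on Borel sets via Riesz representation --- is exactly the standard derivation the paper implicitly relies on. You also correctly flag and resolve the one genuine subtlety (global definition of $\varphi_t$ and uniform compactness of supports on bounded time intervals), which holds in the paper's applications since $p'=|\xi|^2-n-z_0\psi$ is conserved and $n$, $\psi$ are bounded, so nothing is missing.
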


\section{Recap of relevant results about Fredholm and trace-class operators }\label{app:B}

\begin{theorem}[Analytic Fredholm theory]\label{thm:analytic_Fredholm}
Suppose $\Omega\subset \mathbb{C}$ is a connected open set and $\{K(z)\}_{z\in\Omega}$ is a holomorphic family of Fredholm operators. If $A(z_0)^{-1}$ exists for some $z_0\in\Omega$ then $z\mapsto A(z)^{-1}$ is a meromorphic family of operators for $z\in \Omega$ with poles of finite rank.
\end{theorem}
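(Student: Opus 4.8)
The plan is to localize, reduce each chart to a finite-dimensional (scalar-determinant) problem via the ``Fredholm $=$ invertible $+$ finite rank'' device, and then globalize using connectedness of $\Omega$. (I read the ``$K(z)$'' in the statement as the given Fredholm family, and write it $A(z)$ to match the ``$A(z_0)^{-1}$'' in the hypothesis.)

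First I would record that the Fredholm index $\operatorname{ind}A(z)$, being locally constant along the continuous Fredholm-valued family on the connected set $\Omega$, is constant, hence equal to $\operatorname{ind}A(z_0)=0$; this is what makes the next step available at every point. For each $w\in\Omega$ there is then a finite-rank operator $R_w$ (for instance one carrying $\ker A(w)$ isomorphically onto a complement of $\operatorname{ran}A(w)$) with $A(w)+R_w$ invertible. Since the invertible operators form an open set and $z\mapsto A(z)+R_w$ is holomorphic, $B_w(z):=A(z)+R_w$ is invertible with holomorphic inverse on some connected neighbourhood $U_w$ of $w$ (Neumann series); writing $R_w=\sum_{j=1}^{N}\langle\,\cdot\,,g_j\rangle h_j$ and $\phi_j(z):=B_w(z)^{-1}h_j$, one obtains on $U_w$ the factorization $A(z)=B_w(z)\bigl(I-K_w(z)\bigr)$ with $K_w(z):=B_w(z)^{-1}R_w=\sum_j\langle\,\cdot\,,g_j\rangle\phi_j(z)$ a holomorphic family of finite-rank operators.

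Next I would do the finite-dimensional reduction on $U_w$. Solving $(I-K_w(z))u=v$ and pairing with the $g_k$ turns invertibility of $I-K_w(z)$ into nonvanishing of the scalar holomorphic function $\Delta_w(z):=\det\bigl(I-(\langle\phi_j(z),g_k\rangle)_{k,j}\bigr)$, while Cramer's rule gives $(I-K_w(z))^{-1}=I+\Delta_w(z)^{-1}F_w(z)$ with $F_w$ a holomorphic finite-rank family, hence $A(z)^{-1}=B_w(z)^{-1}+\Delta_w(z)^{-1}F_w(z)B_w(z)^{-1}$. On the connected $U_w$ exactly two cases can occur: either $\Delta_w\equiv0$, in which case $A(z)$ is invertible for no $z\in U_w$; or $\Delta_w\not\equiv0$, in which case its zeros are isolated, $A(\cdot)^{-1}$ is meromorphic on $U_w$, and at each pole its principal part is finite-rank, being the Laurent principal part of $\Delta_w(z)^{-1}$ times the holomorphic finite-rank family $F_w(z)B_w(z)^{-1}$.

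Finally I would globalize. Call $w$ \emph{regular} if $A(z)$ is invertible for some $z$ in every neighbourhood of $w$. If $w$ is regular, then in the chart $U_w$ the second alternative must hold (otherwise $A$ is invertible nowhere on $U_w$), so $A(\cdot)^{-1}$ is meromorphic on $U_w$ and $A$ is invertible on a dense open subset of $U_w$; in particular every point of $U_w$ is regular, so the regular set is open. If $w$ is not regular, some open neighbourhood $V$ of $w$ contains no invertible $A(z)$, and then every point of $V$ is irregular, so the regular set is also closed in $\Omega$. Since $A(z_0)$ is invertible, $z_0$ is regular, and connectedness of $\Omega$ forces every point of $\Omega$ to be regular; hence $A(\cdot)^{-1}$ is meromorphic near each point, i.e.\ meromorphic on $\Omega$, with poles of finite rank and no accumulation point in $\Omega$. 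The main obstacle is precisely this last passage: the bad alternative $\Delta_w\equiv0$ has to be excluded globally, which is exactly where connectedness of $\Omega$ (together with invertibility \emph{somewhere}) is used, while the index-constancy remark quietly does the work of making the local ``invertible $+$ finite rank'' factorization available everywhere; the rest is routine operator bookkeeping.
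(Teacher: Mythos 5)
Your proof is correct, and you also correctly resolved the notational slip in the statement (the family is written $K(z)$ but the hypothesis and conclusion refer to $A(z)$). Note, however, that the paper does not prove this theorem at all: it is quoted in the appendix with a pointer to \cite[Theorem C.8]{DyZw:19}, so there is no in-paper argument to match. Your route is the classical Steinberg/Gohberg--Sigal-style proof: constancy of the index on the connected set $\Omega$ (forced to be $0$ by invertibility at $z_0$) gives, near each $w$, a finite-rank correction $R_w$ making $B_w(z)=A(z)+R_w$ invertible, whence $A(z)=B_w(z)(I-K_w(z))$ with $K_w$ finite rank and holomorphic; the finite-dimensional reduction turns local invertibility into nonvanishing of a scalar determinant $\Delta_w$, and the open--closed (``regular set'') argument propagates the good alternative $\Delta_w\not\equiv 0$ from $z_0$ to all of $\Omega$. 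The cited proof in Dyatlov--Zworski is organized instead around a Grushin problem: one augments $A(z)$ to an invertible $2\times 2$ block operator and reduces invertibility to that of the finite-dimensional effective operator $E_{-+}(z)$, again concluding via a determinant and connectedness; the two arguments are structurally equivalent, and yours is arguably more elementary in that it avoids the Grushin formalism. Two small points worth tightening if you write this up: (i) the equivalence ``$I-K_w(z)$ invertible $\iff \Delta_w(z)\neq 0$'' should include the remark that a nonzero null vector of the matrix $I-(\langle\phi_j(z),g_k\rangle)$ produces a \emph{nonzero} null vector $\sum_j c_j\phi_j(z)$ of $I-K_w(z)$; and (ii) ``the principal part is the principal part of $\Delta_w^{-1}$ times the holomorphic finite-rank family'' is a slight abuse --- the precise statement is that each negative Laurent coefficient is a finite sum of Laurent coefficients of $\Delta_w^{-1}$ times Taylor coefficients of $F_w(z)B_w(z)^{-1}$, and the latter are finite rank (e.g.\ because operators of rank at most $r$ form a norm-closed set, or directly from the explicit form of $F_w$), so each coefficient of the principal part is indeed finite rank.
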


For a proof, see, e.g., \cite[Theorem C.8]{DyZw:19}.

\

For $\cH$ a Hilbert space and $B:\cH\to\cH$ a compact, self-adjoint operator, let $\{\lambda_j(B)\}_{j=1}^\infty$ denote the 
eigenvalues of $B$.
For $A$ a compact operator, let 
\beqs
s_j(A):= \sqrt{ \lambda_j(A^*A)}.
\eeqs

Let $\cH_1,\cH_2$ be Hilbert spaces.

\begin{definition}[Trace class]\label{def:trace_class}
Let $A:\cH_1\to \cH_2$ be a compact operator. 
$A$ is \emph{trace class}, $A\in \mathcal{L}_1(\cH_1;\cH_2)$, if
\beqs
\norm{A}_{\mathcal{L}_1(\cH_1;\cH_2)} := \sum_{j=1}^\infty s_j(A) <\infty.
\eeqs
\end{definition}
Observe that this definition immediately implies that if $A\in \mathcal{L}_1(\cH_1;\cH_2)$ then $(A^*A)^{1/2} \in \mathcal{L}(\cH_1;\cH_1)$ with 
\beq\label{eq:square_root}
\big\|(A^* A)^{1/2}\big\|_{\mathcal{L}_1(\cH_1;\cH_1)} = \big\|A\big\|_{\mathcal{L}_1(\cH_1;\cH_2)}.
\eeq

If $A\in \mathcal{L}(\cH_2; \cH_1)$ and $B:\cH_1\to\cH_2$ is bounded, 
then 
\beq\label{eq:composition_trace}
\norm{AB}_{\mathcal{L}_1(\cH_1;\cH_1)} \leq \norm{A}_{\mathcal{L}_1(\cH_2;\cH_1)} \norm{B}_{\cH_1 \to \cH_2};
\eeq
see, e.g., \cite[Equation B.4.7]{DyZw:19}.

If $A:\cH\to \cH$ is a finite-rank operator with non-zero eigenvalues $\{\lambda_j(A)\}_{j=0}^{n-1}$, then
\beqs
\det(I-A):= \prod_{j=0}^{n-1}\big(1- \lambda_j(A)\big).
\eeqs
This map $A\to \det(I-A)$ extends uniquely to a continuous function on $\mathcal{L}_1(\cH;\cH)$ by, e.g., \cite[Proposition B.27]{DyZw:19}.

\begin{theorem}[Properties of trace-class operators and Fredholm determinants]\label{thm:trace}
If $A\in \mathcal{L}_1(\cH;\cH)$, then the following statements are true.

(i) $I-A$ is invertible if and only if $\det(I-A) \neq 0$. 

(ii) 
\beqs
\big|\det(I-A)\big|\leq \exp( \|A\|_{\mathcal{L}_1(\cH;\cH)}).
\eeqs

(iii) 
\beqs
\big\|(I-A)^{-1}\big\|_{\cH\to \cH} \leq \frac{
\det \big( I+ (A^*A)^{1/2}\big)
}{
\big| \det(I-A) \big|
}.
\eeqs
\end{theorem}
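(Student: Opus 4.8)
The three statements are classical facts about Fredholm determinants on trace ideals, so the plan is to reduce each to a finite-dimensional linear-algebra computation and then pass to the limit, using the continuity of $A\mapsto\det(I-A)$ on $\mathcal{L}_1(\cH;\cH)$ recalled just before the theorem. First I would introduce the Schmidt expansion $A=\sum_{j\ge1}s_j(A)\,\langle\cdot,e_j\rangle f_j$ (with $\{e_j\}$, $\{f_j\}$ orthonormal) and its truncations $A_N:=\sum_{j\le N}s_j(A)\,\langle\cdot,e_j\rangle f_j$; these have finite rank, satisfy $\|A_N\|_{\mathcal{L}_1}\le\|A\|_{\mathcal{L}_1}$ and $\|A-A_N\|_{\mathcal{L}_1}\to0$, and one has $(A_N^*A_N)^{1/2}=\sum_{j\le N}s_j(A)\,\langle\cdot,e_j\rangle e_j\to(A^*A)^{1/2}$ in $\mathcal{L}_1$ with $\det(I+(A_N^*A_N)^{1/2})\le\det(I+(A^*A)^{1/2})$.

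Next I would dispatch the finite-rank case. Since $I-A_N$ is the identity off the finite-dimensional $(I-A_N)$-invariant subspace $\operatorname{ran}A_N+(\ker A_N)^\perp$, all three claims for $A_N$ reduce to the case of an $m\times m$ matrix $M=I-A_N$. Writing $\lambda_j$ for the eigenvalues of $A_N$, $\det(I-A_N)=\prod_j(1-\lambda_j)$, which vanishes exactly when $1$ is an eigenvalue, i.e.\ exactly when $M$ is singular --- this is (i) in finite dimensions. For (ii), $|\det(I-A_N)|=\prod_j|1-\lambda_j|\le\prod_j(1+|\lambda_j|)\le\exp\big(\sum_j|\lambda_j|\big)\le\exp\big(\sum_j s_j(A_N)\big)=\exp\big(\|A_N\|_{\mathcal{L}_1}\big)$, the last inequality being Weyl's inequality (which I would in turn deduce from the multiplicative Weyl inequality $\prod_{j\le k}|\lambda_j|\le\prod_{j\le k}s_j$, proved via exterior powers, together with a standard majorization argument). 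For (iii), Cramer's rule gives $M^{-1}=\operatorname{adj}(M)/\det M$; the singular values of $\operatorname{adj}(M)$ are the products of $m-1$ distinct singular values of $M$, so $\|\operatorname{adj}(M)\|=s_1(M)\cdots s_{m-1}(M)$, and subadditivity of singular values gives $s_j(I-A_N)\le s_1(I)+s_j(-A_N)=1+s_j(A_N)$; hence $\|\operatorname{adj}(M)\|\le\prod_j(1+s_j(A_N))=\det(I+(A_N^*A_N)^{1/2})$, and dividing by $|\det(I-A_N)|$ gives (iii) for $A_N$.

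Then I would pass to the limit $N\to\infty$. Continuity of $\det$ on $\mathcal{L}_1$ gives $\det(I-A_N)\to\det(I-A)$ and $\det(I+(A_N^*A_N)^{1/2})\to\det(I+(A^*A)^{1/2})$, so (ii) for $A$ follows at once. If $\det(I-A)\ne0$, then for large $N$ the finite-rank (iii) bounds $\|(I-A_N)^{-1}\|$ uniformly, so the resolvent identity $(I-A)^{-1}-(I-A_N)^{-1}=(I-A)^{-1}(A-A_N)(I-A_N)^{-1}$ --- applied first between truncations --- forces $(I-A_N)^{-1}$ to converge in operator norm to a two-sided inverse of $I-A$ (this already gives $\det(I-A)\ne0\Rightarrow I-A$ invertible, half of (i)); passing to the limit in the finite-rank (iii) then yields (iii). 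The remaining half of (i) --- that $I-A$ not invertible forces $\det(I-A)=0$ --- is the one point not reached by truncation: here I would use that $A$ is compact, so (by the Fredholm alternative) $1$ is an eigenvalue of finite algebraic multiplicity, split $\cH=V_1\oplus V_2$ into $A$-invariant subspaces via the Riesz spectral projection at the eigenvalue $1$ (with $\dim V_1<\infty$), and use multiplicativity of the Fredholm determinant, $\det(I-A)=\det(I-A|_{V_1})\,\det(I-A|_{V_2})$, where the first factor vanishes by the finite-dimensional case.

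The main obstacle is not any single hard estimate but the bookkeeping in the limiting arguments --- in particular the $\mathcal{L}_1$-convergence $(A_N^*A_N)^{1/2}\to(A^*A)^{1/2}$ and the operator-norm convergence of the inverses needed for (iii) --- together with the non-approximative direction of (i), which genuinely needs the Riesz-projection and determinant-multiplicativity machinery rather than finite-rank truncation; the one classical input I would not reprove is Weyl's inequality $\sum_j|\lambda_j(B)|\le\|B\|_{\mathcal{L}_1}$ behind (ii). Since all of this is standard, in the final version it would be cleanest simply to cite \cite[Appendix B]{DyZw:19} for the whole statement.
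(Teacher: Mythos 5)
Your sketch is essentially correct, but it takes a genuinely different route from the paper: the paper does not prove Theorem \ref{thm:trace} at all, it simply cites the literature ((i) from \cite[Proposition B.28]{DyZw:19}, (ii) from \cite[Equations B.5.11 and B.5.19]{DyZw:19}, (iii) from \cite[Theorem 5.1, Chapter 5.1]{GoKr:69}), whereas you rebuild the statements from the finite-rank case via Schmidt-expansion truncations $A_N$ and the continuity of $A\mapsto\det(I-A)$ on $\mathcal{L}_1$. Your finite-dimensional steps are sound: the reduction to a matrix on $\operatorname{ran}A_N+(\ker A_N)^\perp$, the bound $s_j(I-A_N)\le 1+s_j(A_N)$, the adjugate/Cramer computation $\|\operatorname{adj}(M)\|=s_1(M)\cdots s_{m-1}(M)$, and the uniform-bound-plus-resolvent-identity argument that upgrades the truncated inequality (iii) to $A$ and yields the implication $\det(I-A)\neq0\Rightarrow I-A$ invertible. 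What this buys is a self-contained argument; what it costs is that two classical inputs of comparable depth are still taken off the shelf rather than proved: Weyl's inequality $\sum_j|\lambda_j|\le\|A\|_{\mathcal{L}_1}$ in (ii) (which you could in fact avoid entirely by writing $|\det(I-A_N)|=\prod_j s_j(I-A_N)\le\prod_j(1+s_j(A_N))$), and, for the converse half of (i), the multiplicativity $\det\big((I-B)(I-C)\big)=\det(I-B)\det(I-C)$ needed to split off the Riesz spectral subspace at the eigenvalue $1$ (note also that the Riesz projection commutes with $A$, which is what makes the factorization $I-A=(I-AP)\big(I-A(I-P)\big)$ exact). With those two facts either cited or proved by the same truncation-and-continuity device, your argument is complete; your closing suggestion to simply cite \cite[Appendix B]{DyZw:19} is exactly what the paper does.
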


\begin{proof}[References for the proof]
For Part (i), see, e.g., \cite[Proposition B.28]{DyZw:19}. 
For Part (ii), see, e.g., \cite[Equations B.5.11 and B.5.19]{DyZw:19}.
For Part (iii) see, e.g., \cite[Theorem 5.1, Chapter 5.1]{GoKr:69}.
\end{proof}

\section*{Acknowledgements}

The authors thank Jeffrey Galkowski (University College London) for useful discussions about the penetrable-obstacle problem
and Stephen Shipman (Louisiana State University) for useful discussions about the literature on quasi-resonances. Figures \ref{fig:QR1} and \ref{fig:QR2} were produced using code originally written by Andrea Moiola (University of Pavia) for the paper \cite{MoSp:19}.
EAS acknowledges support from EPSRC grant EP/R005591/1. JW acknowledges partial support from NSF grant DMS--2054424.

\end{appendix}

\bibliographystyle{amsalpha}
\bibliography{biblio_LSW}

\end{document}